\definecolor{andresblue}{rgb}{0,0.72,0.92}
\definecolor{andrespink}{rgb}{1,0,1}
\definecolor{munsell}{rgb}{0.0, 0.5, 0.69}
\newtheorem*{rep@theorem}{\rep@title}\newcommand{\newreptheorem}[2]{%
\newenvironment{rep#1}[1]{%
\def\rep@title{\bf #2 \ref{##1}}%
\begin{rep@theorem}}%
{\end{rep@theorem}}}
\newtheorem{theorem}{Theorem}[section]
\newtheorem{proposition}[theorem]{Proposition}
\newtheorem{conjecture}[theorem]{Conjecture}
\newtheorem{lemma}[theorem]{Lemma}
\newtheorem{corollary}[theorem]{Corollary}
\theoremstyle{definition}
\newtheorem{remark}[theorem]{Remark}
\newtheorem{definition}[theorem]{Definition}
\newtheorem{example}[theorem]{Example}
\newtheorem{problem}[theorem]{Problem}
\newcommand{\Q}{\mathbb{Q}}
\newcommand{\R}{\mathbb{R}}
\newcommand{\Z}{\mathbb{Z}}
\newcommand{\bx}{\mathbf{x}}
\newcommand{\X}{\mathfrak{X}}
\newcommand{\by}{\mathbf{y}}
\newcommand{\Xn}{\mathfrak{X}_{n}(a,b)}
\newcommand{\PF}{\mathsf{PF}}
\DeclareMathOperator{\Vol}{Vol}
\DeclareMathOperator{\nVol}{NVol}
\DeclareMathOperator{\Pyr}{Pyr}
\DeclareMathOperator{\Ver}{Ver}
\begin{document}

\title{Generalized parking function polytopes}

\author{Mitsuki Hanada}
\thanks{Hanada is partially supported by the Funai Overseas Scholarship.}
\address{Department of Mathematics\\
         University of California, Berkeley\\
\url{https://math.berkeley.edu/~mhanada/}}
\email{mhanada@berkeley.edu}

\author{John Lentfer}
\thanks{Lentfer is supported by the National Science Foundation Graduate Research Fellowship DGE-2146752.}
\address{Department of Mathematics\\
         University of California, Berkeley\\
\url{https://math.berkeley.edu/~jlentfer/}}
\email{jlentfer@berkeley.edu}

\author{Andr\'es R. Vindas-Mel\'endez}
\thanks{Vindas-Mel\'endez is partially supported by the National Science Foundation under Award DMS-2102921.}
\address{Department of Mathematics\\
         University of California, Berkeley\\
\url{https://math.berkeley.edu/~vindas}}
\email{andres.vindas@berkeley.edu}

%%%%%%%%%%%%%%%%%%%%%%%%%%%%%%%%%%%%%%%%%%%%%%%%%%%%%%%%%%%%%%%%%%%%%

\begin{abstract}
A classical parking function of length $n$ is a list of positive integers $(a_1, a_2, \ldots, a_n)$ whose nondecreasing rearrangement $b_1 \leq b_2 \leq \cdots \leq b_n$ satisfies $b_i \leq i$.
The convex hull of all parking functions of length $n$ is an $n$-dimensional polytope in $\mathbb{R}^n$, which we refer to as the classical parking function polytope. 
Its geometric properties have been explored in (Amanbayeva and Wang 2022) in response to a question posed in (Stanley 2020).
We generalize this family of polytopes by studying the geometric properties of the convex hull of $\mathbf{x}$-parking functions for $\mathbf{x}=(a,b,\dots,b)$, which we refer to as $\mathbf{x}$-parking function polytopes. 
We explore connections between these $\mathbf{x}$-parking function polytopes, the Pitman-Stanley polytope, and  the partial permutahedra of (Heuer and Striker 2022).
In particular, we establish a closed-form expression for the volume of $\mathbf{x}$-parking function polytopes.
This allows us to answer a conjecture of (Behrend et al. 2022) and also obtain a new closed-form expression for the volume of the convex hull of classical parking functions as a corollary.
\end{abstract}

\maketitle
%%%%%%%%%%%%%%%%%%%%%%%%%%%%%%%%%%%%%%%%%%%%%%%%%%%%%%%%%%%%%%%%%%%%%
%\tableofcontents

\section{Introduction}

A \textit{classical parking function} of length $n$ is a list $(a_1, a_2, \ldots, a_n)$ of positive integers whose nondecreasing rearrangement $b_1 \leq b_2 \leq \cdots \leq b_n$ satisfies $b_i \leq i$. 
It is well-known that the number of classical parking functions of length $n$ is $(n+1)^{n-1}$; this number surfaces in a variety of places, for example, it counts the number of planted forests on $n$ vertices 
and the number of regions of a Shi arrangement (see \cite{Bon} for further discussion).
Let $\PF_n$ denote the convex hull of all parking functions of length $n$ in $\R^n$.
In 2020, Stanley \cite{Sta} asked for the number of vertices, the number of faces, the number of lattice points $ \PF_n \cap \Z^n$, and the volume of $\PF_n$. 
These questions were first answered by Amanbayeva and Wang \cite{AW} and Stong \cite{Sto}.

In Section \ref{sec:classical}, we revisit the classical parking function polytope $\PF_n$. 
We provide new results on the appearance of both lower-dimensional parking function polytopes and permutahedra as facets of $\PF_n$.
We connect the classical parking function polytope with the recent work of Heuer and Striker \cite{HS} and Behrend et al. \cite{BCC} on partial permutahedra, and show when they are integrally equivalent. 
We collect different characterizations of the normalized volume of $\PF_n$ in Theorem \ref{thm:main_theorem} and give a new simple, closed-form answer to Stanley's original question on the volume of $\PF_n$.

A natural direction is to extend these results for generalizations of parking functions.
We consider $\mathbf{x}$-parking functions, where $\mathbf{x}=(x_1,\ldots,x_n)$ is a vector of positive integers, which have been explored from an enumerative perspective previously by Yan \cite{Yan2, Yan, Bon} and Pitman and Stanley \cite{PitmanStanley}.
In Section \ref{sec:x-parking}, we focus on the case when $\mathbf{x} = (a,b,b,\ldots, b)$ and generalize results of Amanbayeva and Wang \cite{AW}. 
We establish in Theorem \ref{thm:generalized_closed_form_volume} a closed-form normalized volume formula for all positive integers $a,b$.

\begin{theorem}\label{thm:generalized_closed_form_volume}
For any positive integers $a,b,n$, the normalized volume of the $\bx$-parking function polytope $\nVol(\mathfrak{X}_{n}(a,b))$ is given by
\begin{equation}
    \nVol(\mathfrak{X}_{n}(a,b)) = -n!\left(\frac{b}{2}\right)^n \sum_{i=0}^n \binom{n}{i} (2i-3)!! \left(2n-1 + \frac{2a-2}{b}\right)^{n-i}. \nonumber
\end{equation}
\end{theorem}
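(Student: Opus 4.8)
The plan is to compute $\nVol(\Xn)$ by cutting $\Xn$ along a Weyl chamber, transporting the resulting slice by a unimodular map to a Pitman--Stanley polytope intersected with an order cone, and then evaluating the volume of that polytope through a recursion governed by the tree function $W = we^{bW}$. Concretely, I would first pin down the hyperplane description $\Xn = \{\mathbf{x} \in \R^n : x_i \geq 1 \text{ for all } i,\ \sum_{i \in S} x_i \leq f(|S|)\text{ for all }\emptyset \neq S \subseteq [n]\}$, where $f(k) = \sum_{j=n-k+1}^{n}\bigl(a+(j-1)b\bigr)$ is the sum of the $k$ largest coordinatewise bounds; the inclusion ``$\subseteq$'' is immediate from the definition of an $\bx$-parking function, while the reverse (that every point of the right-hand polytope is a convex combination of $\bx$-parking functions) is handled separately, for instance by reducing via symmetry to points in the chamber and checking those. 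Since the description is symmetric in the coordinates, $\Vol(\Xn) = n!\,\Vol(P)$ for $P := \Xn \cap \{x_1 \leq x_2 \leq \cdots \leq x_n\}$, and inside this chamber every constraint $\sum_{i\in S}x_i \leq f(|S|)$ collapses to the single suffix-sum inequality $x_{n-k+1}+\cdots+x_n \leq f(k)$ while only $x_1 \geq 1$ survives among the lower bounds. The affine unimodular substitution $z_i = x_{n+1-i}-1$ then carries $P$ onto $Q_n := \{\mathbf{z} \in \R^n : z_1 \geq z_2 \geq \cdots \geq z_n \geq 0,\ z_1+\cdots+z_k \leq \rho_k\text{ for }k \in [n]\}$, where $\rho_k := f(k)-k$ and the increments $\delta_k := \rho_k - \rho_{k-1} = (a-1)+(n-k)b$ form a decreasing arithmetic progression of common difference $-b$; thus $Q_n$ is the Pitman--Stanley polytope with parameters $\delta_1,\dots,\delta_n$ cut by the order cone, and $\nVol(\Xn) = n!\,\Vol(\Xn) = (n!)^2\,\Vol(Q_n)$.

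The heart of the proof is computing $\Vol(Q_n)$, where I would exploit the self-similarity of the bounds, $\rho_k^{(n)}(a,b) = \rho_k^{(n-1)}(a+b,b)$ for $k \leq n-1$: peeling off the largest coordinate $z_1$ (equivalently, on the parking-function side, fixing the value of the largest entry) leaves, up to a linear weight and a truncation, a copy of $Q_{n-1}(a+b,b)$. The one subtlety is that the upper endpoint for $z_1$ is the minimum of the outer bound $z_1 \leq \rho_1$ and the inner partial-sum bounds; stratifying $Q_n$ according to which of these is active expresses $\Vol(Q_n)$ as a sum of integrals of lower-dimensional $Q_m$'s against linear weights, and I expect that assembling these over all $n$ shows the generating function $\mathcal{V}(w) = \sum_{n\geq 0}\Vol\bigl(Q_n(a,b)\bigr)\,w^n$ satisfies a functional equation whose solution is
\[
\mathcal{V}(w) \;=\; \frac{e^{(a-1-b/2)\,W}}{\sqrt{1-bW}},\qquad W = W(w)\ \text{ the formal power series with } W = we^{bW}.
\]
Here $W = we^{bW}$ is, for $b=1$, precisely the exponential generating function for rooted labelled forests, which is where the classical parking-function/tree correspondence enters, and the factor $(1-bW)^{-1/2}$ should come out of integrating the single linear weight $\rho_1 - z_2$. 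This step --- keeping the ``minimum of two linear functionals'' under control so the pieces recombine into the clean equation, together with checking the base cases $n = 0,1$ --- is the main obstacle. If a direct derivation of $\mathcal{V}(w)$ proves unwieldy, a safe alternative is to read off from the same stratification a recursion for $\Vol(Q_n)$ (generalizing the classical recursion of Amanbayeva--Wang) and then merely \emph{verify} that the claimed closed form satisfies it.

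It remains to extract the coefficient. Applying the Lagrange--B\"urmann formula to $W = we^{bW}$ --- here $\phi(t) = e^{bt}$, so $w\phi'(W) = bW$, and hence $\mathcal{V}(w) = e^{(a-1-b/2)W}(1-bW)^{1/2}\cdot(1-bW)^{-1}$ satisfies $[w^n]\mathcal{V}(w) = [t^n]\bigl(e^{(a-1-b/2)t}(1-bt)^{1/2}\,e^{bnt}\bigr)$ --- yields $\Vol\bigl(Q_n(a,b)\bigr) = [w^n]\bigl((1-bw)^{1/2}e^{(bn+a-1-b/2)w}\bigr)$. Expanding $(1-bw)^{1/2} = \sum_{i\geq 0}\binom{1/2}{i}(-bw)^i = -\sum_{i\geq 0}\frac{(2i-3)!!\,b^i}{2^i\,i!}\,w^i$ (with the convention $(-3)!! = -1$) and multiplying by the exponential series, the coefficient of $w^n$ equals $-\frac{1}{n!}\sum_{i=0}^n\binom{n}{i}(2i-3)!!\,(b/2)^i\,(bn+a-1-b/2)^{n-i}$; factoring out $(b/2)^n$ and using $(bn+a-1-b/2)/(b/2) = 2n-1+\tfrac{2a-2}{b}$ turns this into $-\frac{1}{n!}\bigl(\tfrac b2\bigr)^n\sum_{i=0}^n\binom{n}{i}(2i-3)!!\bigl(2n-1+\tfrac{2a-2}{b}\bigr)^{n-i}$, and multiplying by $(n!)^2$ gives the asserted value of $\nVol(\Xn)$.
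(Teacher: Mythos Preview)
Your proposal is essentially correct and arrives at the same generating function as the paper, but by a different route and with a markedly simpler coefficient extraction.

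\textbf{Where the two proofs agree.} Your $\mathcal V(w)$ is exactly the paper's $f_{a,b}(w)$: since $\Vol(Q_n)=\Vol(\Xn)/n!=V^{a,b}_n/n!$, your ordinary generating function $\sum_n \Vol(Q_n)w^n$ coincides with their exponential generating function $\sum_n \frac{V^{a,b}_n}{n!}w^n$, and your $W=we^{bW}$ is their $g_b$. The paper also lands on $f_{a,b}(x)=(1-bg_b(x))^{-1/2}e^{(a-\frac b2-1)g_b(x)}$ (their equation derived just before the Ramanujan step), so the target is identical.

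\textbf{Where they differ.} To reach that generating function the paper does a pyramid decomposition of $\Xn$ with apex $(1,\dots,1)$ over the facets, obtaining the recursion $V^{a,b}_n=\frac1n\sum_{k=0}^{n-1}\binom{n}{k}\frac{(b(n-k))^{n-k-1}(nb+kb-b+2a-2)}{2}V^{a,b}_k$, and then solves the resulting first-order linear ODE for $f_{a,b}$. Your chamber-slice map to $Q_n$ and the self-similarity $\delta_k^{(n)}(a,b)=\delta_k^{(n-1)}(a+b,b)$ are correct and give a genuinely different geometric picture; however, as you candidly note, turning the ``minimum of two linear bounds'' stratification into the clean functional equation for $\mathcal V$ is the real work, and what you have written there is a sketch rather than a proof. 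Your fallback (read off a recursion and verify) would in effect reproduce the paper's recursion in different coordinates, so the gap is bridgeable but not yet bridged.

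\textbf{Where your approach is better.} For the extraction step the paper invokes Ramanujan's Master Theorem, passes through a Mellin-type integral, substitutes $u=(nb+a-\frac b2-1)t$, and finally recognises a Poisson--Charlier polynomial. Your use of Lagrange--B\"urmann is both shorter and more natural here: writing $\mathcal V(w)=H(W)/(1-bW)$ with $H(t)=e^{(a-1-b/2)t}(1-bt)^{1/2}$ and $\phi(t)=e^{bt}$, one has $1-W\phi'(W)/\phi(W)=1-bW$, so $[w^n]\mathcal V(w)=[t^n]H(t)e^{bnt}$ immediately, and the binomial expansion of $(1-bt)^{1/2}$ finishes. This bypasses the analytic machinery entirely and would be a worthwhile simplification of the published argument, provided you complete the derivation of $\mathcal V(w)$.
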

\noindent The proof of Theorem \ref{thm:generalized_closed_form_volume} uses tools from analytic combinatorics and analytic number theory, e.g., Ramanujan's Master Theorem.  
By determining when partial permutohedra are integrally equivalent to $\mathbf{x}$-parking functions, we also prove a conjecture of Behrend et al. \cite{BCC}, as Corollary \ref{cor:conjecture}.

In Section \ref{sec:weakly_increasing}, we introduce weakly increasing $\mathbf{x}$-parking functions, which are $\mathbf{x}$-parking functions that are in weakly increasing order $a_1 \leq a_2 \leq \cdots \leq a_n$ without any rearrangement. 
We explore a subpolytope of the $\mathbf{x}$-parking function polytope which is constructed as the convex hull of weakly increasing $\mathbf{x}$-parking functions. 
We show that the convex hull of weakly increasing $\mathbf{x}$-parking functions is integrally equivalent to certain Pitman-Stanley polytopes. 
In the literature, the Pitman-Stanley polytope has been called the ``parking function polytope'' as its volume is related to parking functions.
However, in this paper, we call the convex hull of any generalization of parking functions a parking function polytope.

We conclude with Section \ref{sec:future} where we present some routes for further study on unimodular triangulations, Ehrhart theory, other generalizations where $\mathbf{x} \neq (a, b, \ldots, b)$, and rational parking functions.

%%%%%%%%%%%%%%%%%%%%%%%%%%%%%%%%%%%%%%%%%%%%%%%%%%%%%%%%%%%%%%%%%%%%%

\section{The classical parking function polytope}\label{sec:classical}
The classical parking function polytope was first studied in \cite{AW, Sto}. As mentioned in the introduction, a \textit{classical parking function} of length $n$ is a list $(a_1, a_2, \ldots, a_n)$ of positive integers whose nondecreasing rearrangement $b_1 \leq b_2 \leq \cdots \leq b_n$ satisfies $b_i \leq i$. 
Let $\PF_n$ denote the convex hull of all parking functions of length $n$ in $\R^n$, which we call the \textit{classical parking function polytope}.
For example, the classical parking functions of length $3$ are $111$, $112$, $121$, $211$, $113$, $131$, $311$, $122$, $212$, $221$, $123$, $132$, $231$, $213$, $312$, and $321$.
For $\PF_3$, their convex full in $\R^3$, see Figure \ref{fig:PF_3} (left).

\begin{figure}[h]
    \centering
    \begin{tikzpicture}%
	[x={(-0.702073cm, -0.395494cm)},
	y={(0.711985cm, -0.374594cm)},
	z={(-0.013069cm, 0.838608cm)},
	scale=1.9,
	back/.style={loosely dotted, thin},
	edge/.style={color=black, thick},
	facet/.style={fill=andresblue,fill opacity=0.500000},
	vertex/.style={inner sep=1pt,circle,draw=andrespink,fill=andrespink,thick}]
%
%

%% Coordinate of the vertices:
%%
\coordinate (1.00000, 1.00000, 1.00000) at (1.00000, 1.00000, 1.00000);
\coordinate (3.00000, 2.00000, 1.00000) at (3.00000, 2.00000, 1.00000);
\coordinate (1.00000, 1.00000, 3.00000) at (1.00000, 1.00000, 3.00000);
\coordinate (3.00000, 1.00000, 2.00000) at (3.00000, 1.00000, 2.00000);
\coordinate (3.00000, 1.00000, 1.00000) at (3.00000, 1.00000, 1.00000);
\coordinate (1.00000, 2.00000, 3.00000) at (1.00000, 2.00000, 3.00000);
\coordinate (1.00000, 3.00000, 1.00000) at (1.00000, 3.00000, 1.00000);
\coordinate (1.00000, 3.00000, 2.00000) at (1.00000, 3.00000, 2.00000);
\coordinate (2.00000, 3.00000, 1.00000) at (2.00000, 3.00000, 1.00000);
\coordinate (2.00000, 1.00000, 3.00000) at (2.00000, 1.00000, 3.00000);
%%
%%
%% Drawing edges in the back
%%
\draw[edge,back] (1.00000, 1.00000, 1.00000) -- (1.00000, 1.00000, 3.00000);
\draw[edge,back] (1.00000, 1.00000, 1.00000) -- (3.00000, 1.00000, 1.00000);
\draw[edge,back] (1.00000, 1.00000, 1.00000) -- (1.00000, 3.00000, 1.00000);
%%
%%
%% Drawing vertices in the back
%%
\node[vertex] at (1.00000, 1.00000, 1.00000)     {};
%%
%%
%% Drawing the facets
%%
\fill[facet] (1.00000, 3.00000, 2.00000) -- (1.00000, 2.00000, 3.00000) -- (2.00000, 1.00000, 3.00000) -- (3.00000, 1.00000, 2.00000) -- (3.00000, 2.00000, 1.00000) -- (2.00000, 3.00000, 1.00000) -- cycle {};
\fill[facet] (2.00000, 3.00000, 1.00000) -- (1.00000, 3.00000, 1.00000) -- (1.00000, 3.00000, 2.00000) -- cycle {};
\fill[facet] (2.00000, 1.00000, 3.00000) -- (1.00000, 1.00000, 3.00000) -- (1.00000, 2.00000, 3.00000) -- cycle {};
\fill[facet] (3.00000, 1.00000, 1.00000) -- (3.00000, 2.00000, 1.00000) -- (3.00000, 1.00000, 2.00000) -- cycle {};
%%
%%
%% Drawing edges in the front
%%
\draw[edge] (3.00000, 2.00000, 1.00000) -- (3.00000, 1.00000, 2.00000);
\draw[edge] (3.00000, 2.00000, 1.00000) -- (3.00000, 1.00000, 1.00000);
\draw[edge] (3.00000, 2.00000, 1.00000) -- (2.00000, 3.00000, 1.00000);
\draw[edge] (1.00000, 1.00000, 3.00000) -- (1.00000, 2.00000, 3.00000);
\draw[edge] (1.00000, 1.00000, 3.00000) -- (2.00000, 1.00000, 3.00000);
\draw[edge] (3.00000, 1.00000, 2.00000) -- (3.00000, 1.00000, 1.00000);
\draw[edge] (3.00000, 1.00000, 2.00000) -- (2.00000, 1.00000, 3.00000);
\draw[edge] (1.00000, 2.00000, 3.00000) -- (1.00000, 3.00000, 2.00000);
\draw[edge] (1.00000, 2.00000, 3.00000) -- (2.00000, 1.00000, 3.00000);
\draw[edge] (1.00000, 3.00000, 1.00000) -- (1.00000, 3.00000, 2.00000);
\draw[edge] (1.00000, 3.00000, 1.00000) -- (2.00000, 3.00000, 1.00000);
\draw[edge] (1.00000, 3.00000, 2.00000) -- (2.00000, 3.00000, 1.00000);
%%
%%
%% Drawing the vertices in the front
%%
\node[vertex] at (3.00000, 2.00000, 1.00000)     {};
\node[vertex] at (1.00000, 1.00000, 3.00000)     {};
\node[vertex] at (3.00000, 1.00000, 2.00000)     {};
\node[vertex] at (3.00000, 1.00000, 1.00000)     {};
\node[vertex] at (1.00000, 2.00000, 3.00000)     {};
\node[vertex] at (1.00000, 3.00000, 1.00000)     {};
\node[vertex] at (1.00000, 3.00000, 2.00000)     {};
\node[vertex] at (2.00000, 3.00000, 1.00000)     {};
\node[vertex] at (2.00000, 1.00000, 3.00000)     {};
\end{tikzpicture}
\qquad\qquad
% 
% p
\begin{tikzpicture}[x  = {(0.9cm,-0.076cm)},
                    y  = {(-0.06cm,0.95cm)},
                    z  = {(-0.44cm,-0.29cm)},
                    scale = 1.5,
                    ]

  % POINTS STYLE
  \definecolor{pointcolor_p}{rgb}{ 1,0,1 }
  \tikzstyle{pointstyle_p} = [fill=pointcolor_p]

  % DEF POINTS
  \coordinate (v0_p) at (1, 1, 4);
  \coordinate (v1_p) at (1, 4, 1);
  \coordinate (v2_p) at (4, 1, 1);
  \coordinate (v3_p) at (0.25, 0.25, 0.25);
  \coordinate (v4_p) at (1, 3, 4);
  \coordinate (v5_p) at (1, 4, 3);
  \coordinate (v6_p) at (3, 1, 4);
  \coordinate (v7_p) at (3, 4, 1);
  \coordinate (v8_p) at (4, 1, 3);
  \coordinate (v9_p) at (4, 3, 1);
  \coordinate (v10_p) at (0.333333, 0.333333, 1.33333);
  \coordinate (v11_p) at (0.333333, 1.33333, 0.333333);
  \coordinate (v12_p) at (1.33333, 0.333333, 0.333333);
  \coordinate (v13_p) at (0.25, 0.25, 0.75);
  \coordinate (v14_p) at (0.25, 0.75, 0.25);
  \coordinate (v15_p) at (0.75, 0.25, 0.25);
  \coordinate (v16_p) at (2, 3, 4);
  \coordinate (v17_p) at (2, 4, 3);
  \coordinate (v18_p) at (3, 2, 4);
  \coordinate (v19_p) at (3, 4, 2);
  \coordinate (v20_p) at (4, 2, 3);
  \coordinate (v21_p) at (4, 3, 2);
  \coordinate (v22_p) at (0.5, 1.5, 2);
  \coordinate (v23_p) at (0.5, 2, 1.5);
  \coordinate (v24_p) at (1.5, 0.5, 2);
  \coordinate (v25_p) at (1.5, 2, 0.5);
  \coordinate (v26_p) at (2, 0.5, 1.5);
  \coordinate (v27_p) at (2, 1.5, 0.5);
  \coordinate (v28_p) at (0.333333, 0.666667, 1.33333);
  \coordinate (v29_p) at (0.333333, 1.33333, 0.666667);
  \coordinate (v30_p) at (0.666667, 0.333333, 1.33333);
  \coordinate (v31_p) at (0.666667, 1.33333, 0.333333);
  \coordinate (v32_p) at (1.33333, 0.333333, 0.666667);
  \coordinate (v33_p) at (1.33333, 0.666667, 0.333333);
  \coordinate (v34_p) at (0.25, 0.5, 0.75);
  \coordinate (v35_p) at (0.25, 0.75, 0.5);
  \coordinate (v36_p) at (0.5, 0.25, 0.75);
  \coordinate (v37_p) at (0.5, 0.75, 0.25);
  \coordinate (v38_p) at (0.75, 0.25, 0.5);
  \coordinate (v39_p) at (0.75, 0.5, 0.25);

  % EDGES STYLE
  \definecolor{edgecolor_p}{rgb}{ 0,0,0 }

  % FACES STYLE
  %\definecolor{facetcolor_p}{rgb}{ 0,0.72,0.92 }

  \tikzstyle{facestyle_p} = [fill=andresblue,fill opacity=0.500000, draw=edgecolor_p, line width=1 pt, line cap=round, line join=round]

  % FACES and EDGES and POINTS in the right order
  \draw[facestyle_p] (v6_p) -- (v0_p) -- (v10_p) -- (v30_p) -- (v24_p) -- (v6_p) -- cycle;
  \draw[facestyle_p] (v2_p) -- (v8_p) -- (v26_p) -- (v32_p) -- (v12_p) -- (v2_p) -- cycle;
  \draw[facestyle_p] (v8_p) -- (v6_p) -- (v24_p) -- (v26_p) -- (v8_p) -- cycle;
  \draw[facestyle_p] (v0_p) -- (v4_p) -- (v22_p) -- (v28_p) -- (v10_p) -- (v0_p) -- cycle;
  \draw[facestyle_p] (v36_p) -- (v13_p) -- (v3_p) -- (v15_p) -- (v38_p) -- (v36_p) -- cycle;
  \draw[facestyle_p] (v30_p) -- (v10_p) -- (v13_p) -- (v36_p) -- (v30_p) -- cycle;
  \draw[facestyle_p] (v24_p) -- (v30_p) -- (v36_p) -- (v38_p) -- (v32_p) -- (v26_p) -- (v24_p) -- cycle;

  %POINTS
  \fill[pointcolor_p] (v24_p) circle (1 pt);
  \node at (v24_p) [text=black, inner sep=0.5pt, above right, draw=none, align=left] {};
  \fill[pointcolor_p] (v30_p) circle (1 pt);
  \node at (v30_p) [text=black, inner sep=0.5pt, above right, draw=none, align=left] {};
  \fill[pointcolor_p] (v36_p) circle (1 pt);
  \node at (v36_p) [text=black, inner sep=0.5pt, above right, draw=none, align=left] {};
  \fill[pointcolor_p] (v26_p) circle (1 pt);
  \node at (v26_p) [text=black, inner sep=0.5pt, above right, draw=none, align=left] {};

  %FACETS
  \draw[facestyle_p] (v5_p) -- (v1_p) -- (v11_p) -- (v29_p) -- (v23_p) -- (v5_p) -- cycle;
  \draw[facestyle_p] (v12_p) -- (v32_p) -- (v38_p) -- (v15_p) -- (v12_p) -- cycle;

  %POINTS
  \fill[pointcolor_p] (v32_p) circle (1 pt);
  \node at (v32_p) [text=black, inner sep=0.5pt, above right, draw=none, align=left] {};
  \fill[pointcolor_p] (v38_p) circle (1 pt);
  \node at (v38_p) [text=black, inner sep=0.5pt, above right, draw=none, align=left] {};

  %FACETS
  \draw[facestyle_p] (v35_p) -- (v14_p) -- (v3_p) -- (v13_p) -- (v34_p) -- (v35_p) -- cycle;
  \draw[facestyle_p] (v4_p) -- (v5_p) -- (v23_p) -- (v22_p) -- (v4_p) -- cycle;
  \draw[facestyle_p] (v10_p) -- (v28_p) -- (v34_p) -- (v13_p) -- (v10_p) -- cycle;

  %POINTS
  \fill[pointcolor_p] (v10_p) circle (1 pt);
  \node at (v10_p) [text=black, inner sep=0.5pt, above right, draw=none, align=left] {};
  \fill[pointcolor_p] (v13_p) circle (1 pt);
  \node at (v13_p) [text=black, inner sep=0.5pt, above right, draw=none, align=left] {};

  %FACETS
  \draw[facestyle_p] (v29_p) -- (v11_p) -- (v14_p) -- (v35_p) -- (v29_p) -- cycle;
  \draw[facestyle_p] (v23_p) -- (v29_p) -- (v35_p) -- (v34_p) -- (v28_p) -- (v22_p) -- (v23_p) -- cycle;

  %POINTS
  \fill[pointcolor_p] (v23_p) circle (1 pt);
  \node at (v23_p) [text=black, inner sep=0.5pt, above right, draw=none, align=left] {};
  \fill[pointcolor_p] (v29_p) circle (1 pt);
  \node at (v29_p) [text=black, inner sep=0.5pt, above right, draw=none, align=left] {};
  \fill[pointcolor_p] (v35_p) circle (1 pt);
  \node at (v35_p) [text=black, inner sep=0.5pt, above right, draw=none, align=left] {};
  \fill[pointcolor_p] (v34_p) circle (1 pt);
  \node at (v34_p) [text=black, inner sep=0.5pt, above right, draw=none, align=left] {};
  \fill[pointcolor_p] (v28_p) circle (1 pt);
  \node at (v28_p) [text=black, inner sep=0.5pt, above right, draw=none, align=left] {};
  \fill[pointcolor_p] (v22_p) circle (1 pt);
  \node at (v22_p) [text=black, inner sep=0.5pt, above right, draw=none, align=left] {};

  %FACETS
  \draw[facestyle_p] (v9_p) -- (v2_p) -- (v12_p) -- (v33_p) -- (v27_p) -- (v9_p) -- cycle;
  \draw[facestyle_p] (v1_p) -- (v7_p) -- (v25_p) -- (v31_p) -- (v11_p) -- (v1_p) -- cycle;
  \draw[facestyle_p] (v37_p) -- (v39_p) -- (v15_p) -- (v3_p) -- (v14_p) -- (v37_p) -- cycle;

  %POINTS
  \fill[pointcolor_p] (v3_p) circle (1 pt);
  \node at (v3_p) [text=black, inner sep=0.5pt, above right, draw=none, align=left] {};

  %FACETS
  \draw[facestyle_p] (v7_p) -- (v9_p) -- (v27_p) -- (v25_p) -- (v7_p) -- cycle;
  \draw[facestyle_p] (v33_p) -- (v12_p) -- (v15_p) -- (v39_p) -- (v33_p) -- cycle;

  %POINTS
  \fill[pointcolor_p] (v12_p) circle (1 pt);
  \node at (v12_p) [text=black, inner sep=0.5pt, above right, draw=none, align=left] {};
  \fill[pointcolor_p] (v15_p) circle (1 pt);
  \node at (v15_p) [text=black, inner sep=0.5pt, above right, draw=none, align=left] {};

  %FACETS
  \draw[facestyle_p] (v11_p) -- (v31_p) -- (v37_p) -- (v14_p) -- (v11_p) -- cycle;

  %POINTS
  \fill[pointcolor_p] (v11_p) circle (1 pt);
  \node at (v11_p) [text=black, inner sep=0.5pt, above right, draw=none, align=left] {};
  \fill[pointcolor_p] (v14_p) circle (1 pt);
  \node at (v14_p) [text=black, inner sep=0.5pt, above right, draw=none, align=left] {};

  %FACETS
  \draw[facestyle_p] (v25_p) -- (v27_p) -- (v33_p) -- (v39_p) -- (v37_p) -- (v31_p) -- (v25_p) -- cycle;

  %POINTS
  \fill[pointcolor_p] (v25_p) circle (1 pt);
  \node at (v25_p) [text=black, inner sep=0.5pt, above right, draw=none, align=left] {};
  \fill[pointcolor_p] (v27_p) circle (1 pt);
  \node at (v27_p) [text=black, inner sep=0.5pt, above right, draw=none, align=left] {};
  \fill[pointcolor_p] (v33_p) circle (1 pt);
  \node at (v33_p) [text=black, inner sep=0.5pt, above right, draw=none, align=left] {};
  \fill[pointcolor_p] (v39_p) circle (1 pt);
  \node at (v39_p) [text=black, inner sep=0.5pt, above right, draw=none, align=left] {};
  \fill[pointcolor_p] (v37_p) circle (1 pt);
  \node at (v37_p) [text=black, inner sep=0.5pt, above right, draw=none, align=left] {};
  \fill[pointcolor_p] (v31_p) circle (1 pt);
  \node at (v31_p) [text=black, inner sep=0.5pt, above right, draw=none, align=left] {};

  %FACETS
  \draw[facestyle_p] (v17_p) -- (v19_p) -- (v7_p) -- (v1_p) -- (v5_p) -- (v17_p) -- cycle;

  %POINTS
  \fill[pointcolor_p] (v1_p) circle (1 pt);
  \node at (v1_p) [text=black, inner sep=0.5pt, above right, draw=none, align=left] {};

  %FACETS
  \draw[facestyle_p] (v9_p) -- (v21_p) -- (v20_p) -- (v8_p) -- (v2_p) -- (v9_p) -- cycle;

  %POINTS
  \fill[pointcolor_p] (v2_p) circle (1 pt);
  \node at (v2_p) [text=black, inner sep=0.5pt, above right, draw=none, align=left] {};

  %FACETS
  \draw[facestyle_p] (v19_p) -- (v21_p) -- (v9_p) -- (v7_p) -- (v19_p) -- cycle;

  %POINTS
  \fill[pointcolor_p] (v9_p) circle (1 pt);
  \node at (v9_p) [text=black, inner sep=0.5pt, above right, draw=none, align=left] {};
  \fill[pointcolor_p] (v7_p) circle (1 pt);
  \node at (v7_p) [text=black, inner sep=0.5pt, above right, draw=none, align=left] {};

  %FACETS
  \draw[facestyle_p] (v18_p) -- (v16_p) -- (v4_p) -- (v0_p) -- (v6_p) -- (v18_p) -- cycle;

  %POINTS
  \fill[pointcolor_p] (v0_p) circle (1 pt);
  \node at (v0_p) [text=black, inner sep=0.5pt, above right, draw=none, align=left] {};

  %FACETS
  \draw[facestyle_p] (v16_p) -- (v17_p) -- (v5_p) -- (v4_p) -- (v16_p) -- cycle;

  %POINTS
  \fill[pointcolor_p] (v5_p) circle (1 pt);
  \node at (v5_p) [text=black, inner sep=0.5pt, above right, draw=none, align=left] {};
  \fill[pointcolor_p] (v4_p) circle (1 pt);
  \node at (v4_p) [text=black, inner sep=0.5pt, above right, draw=none, align=left] {};

  %FACETS
  \draw[facestyle_p] (v20_p) -- (v18_p) -- (v6_p) -- (v8_p) -- (v20_p) -- cycle;

  %POINTS
  \fill[pointcolor_p] (v6_p) circle (1 pt);
  \node at (v6_p) [text=black, inner sep=0.5pt, above right, draw=none, align=left] {};
  \fill[pointcolor_p] (v8_p) circle (1 pt);
  \node at (v8_p) [text=black, inner sep=0.5pt, above right, draw=none, align=left] {};

  %FACETS
  \draw[facestyle_p] (v20_p) -- (v21_p) -- (v19_p) -- (v17_p) -- (v16_p) -- (v18_p) -- (v20_p) -- cycle;

  %POINTS
  \fill[pointcolor_p] (v20_p) circle (1 pt);
  \node at (v20_p) [text=black, inner sep=0.5pt, above right, draw=none, align=left] {};
  \fill[pointcolor_p] (v21_p) circle (1 pt);
  \node at (v21_p) [text=black, inner sep=0.5pt, above right, draw=none, align=left] {};
  \fill[pointcolor_p] (v19_p) circle (1 pt);
  \node at (v19_p) [text=black, inner sep=0.5pt, above right, draw=none, align=left] {};
  \fill[pointcolor_p] (v17_p) circle (1 pt);
  \node at (v17_p) [text=black, inner sep=0.5pt, above right, draw=none, align=left] {};
  \fill[pointcolor_p] (v16_p) circle (1 pt);
  \node at (v16_p) [text=black, inner sep=0.5pt, above right, draw=none, align=left] {};
  \fill[pointcolor_p] (v18_p) circle (1 pt);
  \node at (v18_p) [text=black, inner sep=0.5pt, above right, draw=none, align=left] {};

  %FACETS

\end{tikzpicture}

    \caption{On the left, we have the classical parking function polytope $\PF_3$, where the hexagonal facet is the regular permutahedron $\Pi_3$ and the three triangular facets are copies of $\PF_2$. 
    On the right, we have the the Schlegel diagram of $\PF_4$. }
    \label{fig:PF_3}
\end{figure}
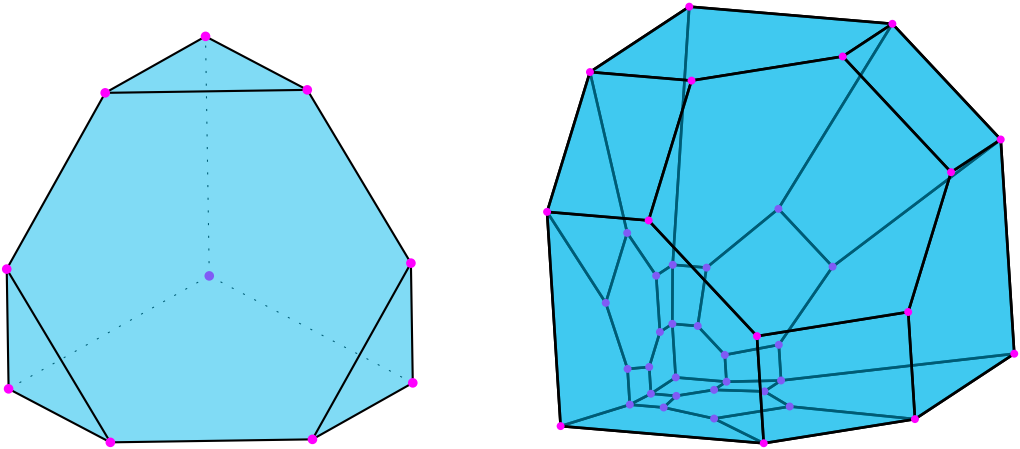

A convex polytope can be described by its vertex and hyperplane descriptions, which we briefly state here. Both were known to Stanley (see \cite{AW}). The vertex description of $\PF_n$ is the convex hull of all vertices given by permutations of \[ (\underbrace{1, \ldots, 1}_k, \underbrace{k+1, k+2, \ldots, n-2, n-1}_{n-k}),\] for $1 \leq k \leq n$. For a proof, this follows from the more general Proposition \ref{prop:abn_vertex} (taking $a=b=1$).

The hyperplane description of $\PF_n$: 
\begin{align*}
    1 \leq x_i &\leq n, &\text{ for } 1 \leq i \leq n,\\
    x_i+x_j &\leq n + (n-1), &\text{ for } i < j,\\
    x_i+x_j+x_k &\leq n + (n-1) + (n-2), &\text{ for } i < j < k,\\
    &\vdots\nonumber\\
    x_{i_1}+ x_{i_2}+ \cdots + x_{i_{n-2}} &\leq n + (n-1) + \cdots + 3, &\text{ for } i_1 < i_2 < \cdots < i_{n-2},\\
    x_{1}+ x_{2}+ \cdots + x_{n} &\leq n + (n-1) + \cdots + 1.
\end{align*}
For a proof, this follows from the more general Proposition \ref{prop:inequality} (taking $a=b=1$).

\subsection{Face structure}
We are able to say more about its face structure by using the regular permutahedron, which we now define.

\begin{definition}[Example 0.10, \cite{Ziegler}; Definition 2.1, \cite{Pos}]\label{def:permutahedron}
The \textit{regular permutahedron} $\Pi_{n} \subseteq \R^n$ is the $(n-1)$-dimensional polytope obtained as the convex hull of all vectors obtained by permuting the coordinates of the vector $(1,2,\ldots,n)$. 
Its vertices can be identified with the permutations in $S_n$ by associating with $(x_1, x_2, \ldots, x_n)$ the permutation that maps $x_i \mapsto i$ such that two permutations are adjacent if and only if the corresponding permutations differ by an adjacent transposition.
More generally, for $\mathbf{r}:=(r_1, \ldots, r_n) \in \R^n$, a \textit{permutahedron} $\Pi_n(\mathbf{r})$ is the convex hull of all vectors that are obtained by permuting the coordinates of the vector $\mathbf{r}$.

The defining inequalities for $\Pi_n$ (Proposition 2.5, \cite{Pos}; \cite{Rad}) are of the form \[x_1 +\cdots +x_n = \frac{n(n+1)}{2},\] and for all nonempty subsets $\{i_1, \ldots, i_k\} \subseteq \{1, \ldots, n\}$, \[x_{i_1} + \cdots +x_{i_k} \leq n + \cdots + (n-k+1)% = \frac{n(n+1)}{2} - \frac{k(k+1)}{2}.
\] 
\end{definition}

\begin{example}
Consider the classical parking function polytope $\PF_3$, and observe that one facet is the hexagon with vertices $(1,2,3)$, $(1,3,2)$, $(2,3,1)$, $(2,1,3)$, $(3,1,2)$, and $(3,2,1)$.
This aligns exactly with the description of the regular permutahedron $\Pi_3$.
Note that $\Pi_3$ is the intersection of $\PF_3$ with the supporting hyperplane given by the linear functional $x_1+x_2+x_3 = \frac{3(3+1)}{2}=6$. See Figure \ref{fig:PF_3} (left).
\end{example}

\begin{example}
For $\PF_4$, there is only one three-dimensional face that is the convex hull of 24 vertices, which are exactly the $4!$ permutations of $(1,2,3,4)$; it is $\Pi_4$. 
Similarly, we can find that there are exactly 8 two-dimensional faces which are the convex hulls of 6 vertices, which are, of course, the 8 hexagonal faces (and thus 2-dimensional permutahedra) which are facets of the three-dimensional permutahedra. See Figure \ref{fig:PF_3} (right) for the Schlegel diagram of $\PF_4$.
\end{example}

We have the following result on when the permutahedron appears as a facet of the classical parking function polytope.

\begin{proposition}\label{prop:permutahedron}
The regular permutahedron appears as a facet of the classical parking function polytope exactly once. 
\end{proposition}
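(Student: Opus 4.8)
The plan is to prove two claims: (1) the regular permutahedron $\Pi_n$ really does occur as a facet of $\PF_n$; and (2) no facet of $\PF_n$ other than that one is combinatorially isomorphic to a permutahedron on $n!$ vertices, which in particular forbids a second copy of $\Pi_n$.

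For (1), I would consider the linear functional $\ell(\bx)=x_1+\cdots+x_n$. If $\bx$ is a parking function with nondecreasing rearrangement $b_1\le\cdots\le b_n$, then $\ell(\bx)=\sum_i b_i\le\sum_i i=\binom{n+1}{2}$, with equality exactly when $b_i=i$ for all $i$, i.e.\ exactly when $\bx$ is a permutation of $(1,\dots,n)$. Hence $H=\{\ell=\binom{n+1}{2}\}$ is a supporting hyperplane of $\PF_n$, and since a face cut out by a supporting hyperplane of $\conv(S)$ is the convex hull of the points of $S$ lying on it, $F_0:=\PF_n\cap H=\conv\{\text{permutations of }(1,\dots,n)\}=\Pi_n$. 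As $\dim\Pi_n=n-1$ (Definition \ref{def:permutahedron}) and $\dim\PF_n=n$, the face $F_0$ is a facet.

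For (2), suppose $F\ne F_0$ is a facet of $\PF_n$ combinatorially isomorphic to $\Pi_n$. Then $|V(F)|=n!$, and every proper face of $F$ has at most $(n-1)!$ vertices: each facet of $\Pi_n$ is a product $\Pi_k\times\Pi_{n-k}$ with $1\le k\le n-1$, hence has $k!(n-k)!\le(n-1)!$ vertices, and every proper face of a polytope is contained in a facet. Since distinct facets of a polytope are never nested, $F_0\cap F$ is a proper face of both, so $|V(F_0)\cap V(F)|=|V(F_0\cap F)|\le(n-1)!$ and therefore
\[|V(\PF_n)|\ \ge\ |V(F_0)\cup V(F)|\ =\ 2\cdot n!-|V(F_0)\cap V(F)|\ \ge\ 2\cdot n!-(n-1)!.\]
On the other hand, the vertex enumeration of $\PF_n$ from \cite{AW} gives $|V(\PF_n)|=\sum_{j=1}^{n}\tfrac{n!}{j!}<n!(e-1)$, and $n!(e-1)<n!\bigl(2-\tfrac1n\bigr)=2\cdot n!-(n-1)!$ for every $n\ge4$. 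This contradiction settles $n\ge4$; the cases $n\le3$ are checked directly (for $n=3$, the seven facets of $\PF_3$ are one hexagon $\Pi_3$, three triangles $\cong\PF_2$, and three pentagons, only the first being a permutahedron of order $3$).

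The point to be careful about is what ``appears as a facet'' should mean; since the argument for (2) uses only the vertex counts of $F$ and of its proper faces, it applies verbatim to the combinatorial, affine, or metric reading, and (1) exhibits $\Pi_n$ literally. An alternative route, if a complete list of the facet-defining inequalities of $\PF_n$ is on hand, is to read off that list and observe that only $\{\ell=\binom{n+1}{2}\}$ yields a permutahedron of order $n$, the remaining facets being lower-dimensional parking-function polytopes and the like. The one genuinely delicate step is verifying the strict inequality $|V(\PF_n)|<2\cdot n!-(n-1)!$ in the range $n\ge4$, which is where the explicit vertex count of Amanbayeva and Wang is needed; the small cases must then be handled by hand.
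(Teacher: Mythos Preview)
Your existence argument (part (1)) matches the paper's. For uniqueness you take a genuinely different route. The paper works directly through the list of facet-defining hyperplanes of $\PF_n$ (precisely the ``alternative route'' you mention at the end): for each hyperplane $x_{i_1}+\cdots+x_{i_k}=n+(n-1)+\cdots+(n-k+1)$ with $1\le k\le n-2$ it counts the vertices on it as $k!\sum_{m=1}^{n-k}\frac{(n-k)!}{m!}$ and verifies this is strictly less than $n!$, so only $H$ (the case $k=n$) supports $n!$ vertices. Your argument instead leverages the global vertex count: two facets combinatorially isomorphic to $\Pi_n$, meeting in a proper face with at most $(n-1)!$ vertices, would force $|V(\PF_n)|\ge 2\cdot n!-(n-1)!$, contradicting the Amanbayeva--Wang enumeration for $n\ge4$. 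The paper's method yields the marginally stronger conclusion that no other facet has $n!$ vertices at all and requires no small-case analysis; your method is more conceptual and avoids the hyperplane description entirely, at the price of handling $n\le3$ by inspection. One caveat on your closing remark: under the purely combinatorial reading the statement is actually false for $n=2$, since every edge of the triangle $\PF_2$ is combinatorially a $\Pi_2$; only the affine or metric readings survive there.
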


\begin{proof}
By the definition of the parking function polytope $\PF_n$, it is the convex hull of all vertices given by permutations of \[ (\underbrace{1, \ldots, 1}_k, \underbrace{k+1, k+2, \ldots, n-2, n-1}_{n-k}),\] for $1 \leq k \leq n$ (here, $k=0$ would be superfluous).
Each permutation of $(1,2,\ldots,n)$ appears as a vertex. 
Thus, the convex hull of these $n!$ vertices, which is exactly $\Pi_n$, is contained within $\PF_n$.

We now use the hyperplane description of $\PF_n$.
Consider the hyperplane $H$ defined by $x_{1}+ x_{2}+ \cdots + x_{n} = \frac{n(n+1)}{2} $.
We claim  $H\cap \PF_n = \Pi_{n}$.
Since all permutations of $(1,2,\ldots, n)$ satisfy $1+2+\cdots+n = \frac{n(n+1)}{2}$, the vertices that give the vertex description of $\Pi_{n}$ are in $H \cap \PF_n$.
By taking their convex hull, it follows that $\Pi_n\subseteq H \cap \PF_n$.    

Now, suppose $\bx:=(x_1, x_2, \ldots, x_n)$ is a point in $H \cap \PF_n$.
As we are dealing with a subset of a polytope intersecting a hyperplane, $H \cap \PF_n$ is a polytope of dimension (at most) $n-1$. 
Suppose towards a contradiction that $\bx$ is not in $\Pi_{n}$. 
This means that by the defining inequalities for $\Pi_n$, 
\begin{enumerate}
    \item  $x_1 + \cdots + x_n \neq \frac{n(n+1)}{2}$, a contradiction, or
    \item there exists some nonempty subset $\{i_1, \ldots, i_k\} \subseteq \{1, \ldots, n\}$ such that 
    \[x_{i_1} + \cdots + x_{i_k} > n + \cdots + (n-k+1) %= \frac{n(n+1)}{2} - \frac{k(k+1)}{2}.
    \]
\end{enumerate}
But (2) is not allowed by the defining inequalities for a parking function polytope. 
Hence, $\mathbf{x}$ is in $\Pi_n$, so $\Pi_n = H \cap \PF_n$ is a facet of $\PF_n$.

To show uniqueness, it suffices to show that the only hyperplane in the inequality description that corresponds to a facet with $n!$ vertices is $H$.
Assume a facet with $n!$ vertices corresponds to a hyperplane of the form \[x_{i_1}+\cdots + x_{i_k} = n + \cdots + (n+1-k), \text{ where } 1\leq k\leq n-2. \]
By the vertex description of $\PF_n$, the number of vertices that satisfy this equation is given by 
$$k! \sum_{m=1}^{n-k}\frac{(n-k)!}{m!},$$ 
where $m$ is the number of coordinates in the vertex that have value 1. 
Note that since $1\leq k \leq n-2$, it follows that $ n = \binom{n}{1} \leq \binom{n}{k}$.
Rearranging the inequality, we get that $n\cdot k!(n-k)!\leq n!$.
We can also see that $$\frac{1}{1}+\frac{1}{2!}+\cdots +\frac{1}{(n-k)!} < (n-k) < n.$$ 
Hence, $$k! (n-k)! \sum\limits_{m=1}^{n-k}\frac{1}{m!}< n\cdot k!(n-k)! \leq n!.$$
Therefore, $H$ is the only possible hyperplane in the inequality description of $\PF_n$ that corresponds to a facet with $n!$ many variables.
\end{proof}

The following lemma allows us to establish a lower bound for the number of permutahedra of any dimension in the parking function polytope.

\begin{lemma}
The $(n-2)$-dimensional permutahedron $\Pi_{n-1}$ appears as a facet of $\Pi_{n}$ exactly $2n$ times.
\end{lemma}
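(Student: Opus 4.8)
The plan is to combine the hyperplane description of $\Pi_n$ in Definition~\ref{def:permutahedron} with the standard fact that the facets of a permutahedron are products of smaller permutahedra. Recall that the facets of $\Pi_n$ are in bijection with nonempty proper subsets $S \subseteq \{1,\dots,n\}$: for $S = \{i_1,\dots,i_k\}$ the inequality $x_{i_1} + \cdots + x_{i_k} \le n + (n-1) + \cdots + (n-k+1)$ is facet-defining, and these are all the facets. On the facet $F_S$ it defines, a vertex of $\Pi_n$ (that is, a permutation of $(1,2,\dots,n)$) lies on $F_S$ exactly when its $S$-coordinates are the $k$ largest values $n-k+1,\dots,n$, since $n+(n-1)+\cdots+(n-k+1)$ is the maximum possible value of $\sum_{i\in S}x_i$ over all such permutations. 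Hence $F_S$ is the convex hull of these $k!\,(n-k)!$ permutations; equivalently, it is the product of a translate of $\Pi_k$ (on the coordinates indexed by $S$) with a translate of $\Pi_{n-k}$ (on the remaining coordinates), so $F_S$ is integrally equivalent to $\Pi_k \times \Pi_{n-k}$. I would either cite this product structure (e.g.\ \cite{Ziegler}) or read it off directly from the supporting hyperplane $\sum_{i\in S}x_i = n+\cdots+(n-k+1)$.

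First I would exhibit $2n$ copies of $\Pi_{n-1}$. Assume $n \ge 3$, so that the subsets with $|S| = 1$ and those with $|S| = n-1$ form two disjoint families, each of size $\binom{n}{1}=\binom{n}{n-1}=n$. For $S = \{j\}$ the facet $F_S = \{x_j = n\}\cap\Pi_n$ is the convex hull of the vectors whose $j$-th coordinate is $n$ and whose other coordinates range over all permutations of $(1,\dots,n-1)$; this is a copy of $\Pi_{n-1}$ (equivalently $\Pi_1\times\Pi_{n-1}\cong\Pi_{n-1}$, as $\Pi_1$ is a point). Symmetrically, for $|S| = n-1$, i.e.\ $S$ the complement of a singleton $\{j\}$, the facet is $\{x_j = 1\}\cap\Pi_n$, again a copy of $\Pi_{n-1}$. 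Thus $\Pi_{n-1}$ appears as a facet of $\Pi_n$ at least $2n$ times.

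Next I would show there are no further copies, i.e.\ that for $2 \le k \le n-2$ the facet $F_S \cong \Pi_k\times\Pi_{n-k}$ is not even combinatorially equivalent to $\Pi_{n-1}$. By the vertex count above, $F_S$ has $k!\,(n-k)!$ vertices, while $\Pi_{n-1}$ has $(n-1)!$ vertices. Since $\dfrac{(n-1)!}{k!\,(n-k)!} = \dfrac{1}{k}\binom{n-1}{k-1}$ and, as the binomial coefficients $\binom{n-1}{j}$ increase and then decrease in $j$, we have $\binom{n-1}{k-1}\ge\binom{n-1}{1}=n-1 > k$ whenever $1\le k-1\le n-3$, it follows that $k!\,(n-k)! < (n-1)!$. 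Hence such a facet has strictly fewer vertices than $\Pi_{n-1}$ and cannot be a copy of it, so $\Pi_{n-1}$ appears exactly $2n$ times.

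The only real content is the identification of the facet $F_S$ with $\Pi_k\times\Pi_{n-k}$ (or, equivalently, pinning down its vertex set), which is classical but must be invoked carefully; after that, separating the ``$\Pi_1\times\Pi_{n-1}$'' facets from the rest is an elementary vertex count, so I do not expect a genuine obstacle. (For $n=2$, $\Pi_2$ is a segment and $\Pi_1$ a point, so the count should be read for $n\ge 3$.)
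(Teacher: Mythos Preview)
Your proposal is correct and follows essentially the same approach as the paper: identify the $2n$ facets coming from singletons and their complements as copies of $\Pi_{n-1}$, then rule out all other facets via the vertex count $k!(n-k)! < (n-1)!$ for $2\le k\le n-2$. The only cosmetic difference is that you package the facet description as $\Pi_k\times\Pi_{n-k}$ and give a slightly more explicit justification of the inequality, whereas the paper reads off the vertex sets directly and states the inequality without elaboration.
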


\begin{proof}
Consider the defining inequalities of $\Pi_{n}$ given in Definition \ref{def:permutahedron}.
We claim that the hyperplanes that correspond to $\Pi_{n-1}$ as facets are those of the form \[x_i \leq n \text{ and } x_{i_1} + \cdots + x_{i_{n-1}}\leq n + (n-1) + \cdots +2.\] 
We can see that for any $i$, the vertices of $\Pi_n$ that satisfy $x_i = n$ are those where the $i$-th coordinate is $n$ and all the other coordinates can be written as a permutation of $(1,2,\dots, n-1)$. 
This facet is exactly $\Pi_{n-1}$.
Similarly, we can see the vertices that satisfy  \[x_{i_1} + \cdots + x_{i_{n-1}} = n + (n-1) + \cdots +2, \text{ where } \{i_1,\dots, i_{n-1}\} = \{1,\ldots, n\} \setminus \{k\}, \text{ for  some } k,\] are vertices where the $k$-th coordinate is 1 and all the other coordinates can be written as a permutation of $(2,\dots, n)$. 
Thus, facets that correspond to hyperplanes of these forms are $\Pi_{n-1}$.

To show that these are the only hyperplanes that can correspond to $\Pi_{n-1}$, we will show that the only hyperplanes that can correspond to a facet with $(n-1)!$ vertices are the ones mentioned above.  
The proof is similar to the uniqueness proof of Proposition \ref{prop:permutahedron}.

For a hyperplane $x_{i_1} + \cdots +x_{i_k} = n + \cdots + (n-k+1)$ consisting of $k$ many variables, we can see that there are $k!(n-k)!$ vertices of $\Pi_n$ that satisfy it. If we have that $k$ is neither equal to $1$ nor $n-1$, it follows that $k!(n-k)!<(n-1)!$. 
Hence, if a hyperplane with $k$ many variables corresponds to $\Pi_{n-1}$, which has $(n-1)!$ vertices, it follows that $k$ equals $1$ or $n-1$.
\end{proof}

\begin{proposition}
The $(n-1)$-dimensional parking function polytope $\PF_{n-1}$ appears as a facet of the $n$-dimensional parking function polytope $\PF_n$ exactly $n$ times.
\end{proposition}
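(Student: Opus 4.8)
The plan is to produce $n$ explicit facets of $\PF_n$ that are integrally equivalent to $\PF_{n-1}$, and then to invoke the complete (non-redundant) facet description of $\PF_n$, together with a vertex count, to rule out any others. Throughout one assumes $n\ge 3$, so that $\PF_{n-1}$ has dimension $n-1$ and the statement is not vacuous.

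First I would establish existence. Fix $j\in\{1,\dots,n\}$ and consider the inequality $x_j\le n$, which is valid for $\PF_n$ because every parking function of length $n$ has all entries in $\{1,\dots,n\}$. Using the vertex description of $\PF_n$ recalled in the proof of Proposition~\ref{prop:permutahedron}, I would check that a vertex of $\PF_n$ lies on the hyperplane $\{x_j=n\}$ exactly when it is obtained from a vertex of $\PF_{n-1}$---a permutation of $(\underbrace{1,\dots,1}_a,a+1,\dots,n-1)$ with $1\le a\le n-1$---by inserting the entry $n$ in coordinate $j$. The affine injection $\R^{n-1}\hookrightarrow\R^n$ that inserts the value $n$ in position $j$ is a unimodular isomorphism onto the hyperplane $\{x_j=n\}$ and sends the vertex set of $\PF_{n-1}$ bijectively to the set of vertices of $\PF_n$ lying on $\{x_j=n\}$; since each polytope is the convex hull of its vertices, this map carries $\PF_{n-1}$ onto $\PF_n\cap\{x_j=n\}$. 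Hence that face is $(n-1)$-dimensional---so a facet---and integrally equivalent to $\PF_{n-1}$, and the $n$ distinct hyperplanes $\{x_j=n\}$ yield $n$ distinct such facets.

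For uniqueness, I would use that by the hyperplane description of $\PF_n$ from \cite{AW} every facet is of one of three types: (i) $\{x_i=1\}$ for some $i$; (ii) $\{\sum_{i\in S}x_i=|S|n-\binom{|S|}{2}\}$ for some $S$ with $1\le|S|\le n-2$; or (iii) the permutahedron $\Pi_n=\{\sum_i x_i=\binom{n+1}{2}\}$. The type-(ii) facets with $|S|=1$ are exactly the $n$ facets found above. Since integral equivalence preserves the number of vertices, it suffices to compute that number for every facet of the remaining types and compare it with $V_{n-1}$, where $V_m:=\sum_{a=1}^m m!/a!$ is the number of vertices of $\PF_m$. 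The vertex analysis underlying the facet description gives $n!$ vertices for $\Pi_n$, gives $\sum_{a=1}^{n}(n-1)!/(a-1)!=V_{n-1}+(n-1)!$ vertices for each $\{x_i=1\}$, and gives $k!\,V_{n-k}$ vertices for a type-(ii) facet with $|S|=k$ (arrange $\{n,n-1,\dots,n-k+1\}$ among the coordinates indexed by $S$ in one of $k!$ ways and place a vertex of $\PF_{n-k}$ on the remaining coordinates). The first two counts visibly exceed $V_{n-1}$; and for $2\le k\le n-2$ a short estimate using $V_m<2\,m!$ and $k!(n-k)!=n!/\binom{n}{k}\le 2(n-2)!$ shows $k!\,V_{n-k}<4(n-2)!\le(n-1)!<V_{n-1}$ once $n\ge 5$, the cases $n=3,4$ being immediate. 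Therefore exactly the $n$ facets $\{x_j=n\}$ are integrally equivalent to $\PF_{n-1}$.

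I expect the uniqueness half to be the main obstacle: it rests on having the full, non-redundant facet list of $\PF_n$ and on correctly identifying the combinatorial type of each facet---in particular, that a partial-sum facet decomposes as a product of a permutahedron with a lower-dimensional parking-function polytope---so that its vertices can be enumerated. The existence half is routine once the vertex description is in hand.
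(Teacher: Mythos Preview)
Your proposal is correct and follows essentially the same strategy as the paper: exhibit the $n$ facets $\{x_j=n\}$ as copies of $\PF_{n-1}$, then rule out all other facets by comparing vertex counts. The only differences are in execution---the paper establishes existence by reducing the inequality description rather than via the vertex map, and for uniqueness it uses the cleaner bound $k!(n-k)!\le(n-1)!$ (from $\binom{n}{k}\ge n$ for $2\le k\le n-2$), which avoids your separate treatment of $n=3,4$.
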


\begin{proof}
Consider the inequality description of $\PF_n$:
\begin{align}
    1 \leq x_i &\leq n, &\text{ for } 1 \leq i \leq n,\label{eq:1}\\
    x_i+x_j &\leq n + (n-1), &\text{ for } i < j,\label{eq:2}\\
    x_i+x_j+x_k &\leq n + (n-1) + (n-2), &\text{ for } i < j < k,\label{eq:3}\\
    &\vdots\nonumber\\
    x_{i_1}+ x_{i_2}+ \cdots + x_{i_{n-2}} &\leq n + (n-1) + \cdots + 3, &\text{ for } i_1 < i_2 < \cdots < i_{n-2},\label{eq:n-2}\\
    x_{1}+ x_{2}+ \cdots + x_{n} &\leq n + (n-1) + \cdots + 1.\label{eq:n}
\end{align}
Fix $x_i = n$ for some $i$; without loss of generality say $i=n$. 
Then we proceed to reduce the system of inequalities.
For all $1 \leq i \leq n-1$, we still have $1 \leq x_i$, but if any \[x_i > n - 1, \text{ then } x_i + x_n >  n + (n-1),\] which contradicts (\ref{eq:2}).
Thus, we have $1 \leq x_i \leq n-1$ for $1 \leq i \leq n-1$. 
Next, for $i < j < n$, if \[x_i + x_j > (n-1) + (n-2), \text{ then } x_i + x_j + x_n > n + (n-1) + (n-2),\] which contradicts (\ref{eq:3}). 
Thus, we have $x_i + x_j \leq (n-1) + (n-2)$ for $i < j < n$. 
Continuing this process, we can refine the inequalities up through: for $i_1 < i_2 < \cdots < i_{n-3} < n$, if \[x_{i_1} + x_{i_2} + \cdots + x_{i_{n-3}} > (n-1) + (n-2) + \cdots + 3,\] then \[x_{i_1} + x_{i_2} + \cdots + x_{i_{n-3}} + x_n > n + (n-1) + (n-2) + \cdots +3,\] which contradicts (\ref{eq:n-2}). 
Thus, we have \[x_{i_1} + x_{i_2} + \cdots + x_{i_{n-3}} \leq (n-1) + (n-2) + \cdots + 3.\] 
Lastly, consider if $x_1 + x_2 + \cdots + x_{n-1} > (n-1) + \cdots +1$. 
Then \[x_1 + x_2 + \cdots + x_{n-1} + x_n >  n+ (n-1) + \cdots +1,\] which contradicts (\ref{eq:n}). 
Thus, \[x_1 + x_2 + \cdots + x_{n-1} \leq (n-1) + \cdots +1.\]
Collecting these results gives the following inequality description.
\begin{align*}
    1 \leq x_i &\leq n-1, &\text{ for } 1 \leq i \leq n-1,\\
    x_i+x_j &\leq (n-1) + (n-2), &\text{ for } i < j < n,\\
    x_i+x_j+x_k &\leq (n-1) + (n-2) + (n-3), &\text{ for } i < j < k < n,\\
    &\vdots\\
    x_{i_1}+ x_{i_2}+ \cdots + x_{i_{n-3}} &\leq (n-1) + (n-2) + \cdots + 3, &\text{ for } i_1 < i_2 < \cdots < i_{n-2} < n,\\
    x_{1}+ x_{2}+ \cdots + x_{n-1} &\leq  (n-1) + \cdots + 1.
\end{align*}
This is exactly the inequality description for $\PF_{n-1}$. 
Hence, $\PF_{n-1}$ is a facet of $\PF_n$.
Now, as the choice of $i$ was arbitrary, there are $n$ choices for $i$, so there are $n$ copies of $\PF_{n-1}$ appearing as facets of $\PF_n$. 

Now we will show that these are the only occurrences. 
We know that $\PF_{n-1}$ has $(n-1)!\sum\limits_{m=1}^{n-1} \frac{1}{m!}$ many  vertices.
Similar to the proof of Proposition \ref{prop:permutahedron}, we can see that for a hyperplane with $k$ variables, there are $k!(n-k)!\sum\limits_{m=1}^{n-k} \frac{1}{m!}$ many vertices that satisfy it. 
If $k$ is not equal to $1$ or $n$, we have $n\leq \binom{n}{k}$, thus $k!(n-k)!\leq (n-1)!$.
Then since $$k!(n-k)!\sum\limits_{m=1}^{n-k} \frac{1}{m!} < k!(n-k)!\sum\limits_{m=1}^{n-1} \frac{1}{m!} \leq (n-1)! \sum\limits_{m=1}^{n-1} \frac{1}{m!},$$  these hyperplanes cannot correspond to a facet with $(n-1)!\sum\limits_{m=1}^{n-1} \frac{1}{m!}$ many vertices.
If $k =n$, then the vertices that satisfy the equation are exactly all permutations of $(1,2,\dots, n)$, which is $n! > (n-1)! \sum\limits_{m=1}^{n-1} \frac{1}{m!}$ many vertices, hence this hyperplane cannot correspond to $\PF_{n-1}$. 

For a hyperplane of the form $x_i = 1$, there are $\sum_{m=0}^{n-1}\frac{(n-1)!}{m!} \neq \sum_{m=1}^{n-1} \frac{(n-1)!}{m!}$ many vertices that satisfy the equation, hence it also cannot correspond to $\PF_{n-1}$. 
Thus, the only hyperplanes that support a facet of the form $\PF_{n-1}$ are the $n$ mentioned above.
\end{proof}

\subsection{Volume}
Next, we consider a collection of related polytopes, called partial permutahedra, which were first introduced in \cite{HS} in terms of partial permutation matrices; a recursive volume formula was presented in \cite{BCC}.
For our purposes, it suffices to take their vertex description as the definition.

\begin{definition}[Proposition 5.7, \cite{HS}; Proposition 2.6, \cite{BCC}]\label{def:pp-vertices} Let $n$ and $p$ be positive integers. 
The \emph{partial permutahedron} $\mathcal{P}(n,p)$ is the polytope with all permutations of the vectors 
\[(\underbrace{0, \ldots, 0}_{n-k}, \underbrace{p-k+1, \ldots, p-1, p}_k ),\]
for all $0 \leq k \leq \min(n,p)$, as vertices.
\end{definition}

Two integral polytopes $P\subseteq \R^m$ and $Q\subseteq \R^n$ are \emph{integrally equivalent} if there exists an affine transformation $\Phi:\R^m \rightarrow \R^n$ whose restriction to $P$ preserves the lattice.
We establish the following result relating partial permutahedra and classical parking functions.

\begin{proposition}\label{prop:classical-partial}
The classical parking function polytope $\PF_n$ is integrally equivalent to the partial permutahedron $\mathcal{P}(n, n-1)$. In particular, they are related by a translation by the vector $(1,1, \ldots, 1)$.
\end{proposition}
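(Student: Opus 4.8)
The plan is to exhibit an explicit affine map $\Phi:\R^n\to\R^n$ carrying $\PF_n$ onto $\mathcal P(n,n-1)$ and preserving $\Z^n$, and then check it matches vertex sets. Recall that $\PF_n$ is the convex hull of all permutations of the vectors $(\underbrace{1,\dots,1}_k,\,k{+}1,k{+}2,\dots,n)$ for $1\le k\le n$ (as used in the proof of Proposition \ref{prop:permutahedron}), while by Definition \ref{def:pp-vertices} the polytope $\mathcal P(n,n-1)$ is the convex hull of all permutations of $(\underbrace{0,\dots,0}_{n-k},\,(n-1)-k+1,\dots,n-1)=(\underbrace{0,\dots,0}_{n-k},\,n-k,\dots,n-1)$ for $0\le k\le\min(n,n-1)=n-1$.

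First I would record the natural bijection on the level of generating vectors: subtracting $1$ from every coordinate of $(\underbrace{1,\dots,1}_k,\,k{+}1,\dots,n)$ gives $(\underbrace{0,\dots,0}_k,\,k,k{+}1,\dots,n-1)$, which is, after reindexing $k\mapsto n-k$, precisely the vector $(\underbrace{0,\dots,0}_{n-k},\,n-k,\dots,n-1)$ defining a vertex class of $\mathcal P(n,n-1)$. Note the index ranges also match: $1\le k\le n$ for $\PF_n$ corresponds to $0\le n-k\le n-1$, i.e. the full range in Definition \ref{def:pp-vertices}. So the translation $\Phi(\bx)=\bx-(1,1,\dots,1)$ sends the full set of vertex-generating vectors of $\PF_n$ bijectively onto that of $\mathcal P(n,n-1)$.

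Next I would argue that this is enough. Since $\Phi$ is an affine bijection of $\R^n$ commuting with coordinate permutations, it sends the convex hull of all permutations of the $\PF_n$-generators onto the convex hull of all permutations of their images, i.e. $\Phi(\PF_n)=\mathcal P(n,n-1)$. Translation by an integer vector clearly maps $\Z^n$ bijectively onto $\Z^n$, hence restricts to a lattice-preserving map on $\PF_n$ (indeed on all of $\R^n$), so $\PF_n$ and $\mathcal P(n,n-1)$ are integrally equivalent in the sense defined just before the statement. One should also remark that $\PF_n$ is full-dimensional in $\R^n$ (it contains the cube $[1,2]^n$, say), so there is no dimension subtlety.

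I do not expect a real obstacle here; the only care needed is bookkeeping — making sure the vertex description of $\PF_n$ invoked is exactly the one used earlier, confirming the index reindexing $k\leftrightarrow n-k$ lines up the two families of generators without gaps or repeats, and noting that convex hull commutes with the affine map. The statement calls itself a ``simple result,'' and the proof is essentially the one-line observation that the two polytopes differ by the translation $\bx\mapsto\bx-\mathbf 1$ together with the matching of generating vectors; I would present it in that compact form.
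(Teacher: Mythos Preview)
Your proposal is correct and is exactly the paper's approach: the paper's proof is the single observation that the translation $(x_1,\dots,x_n)\mapsto(x_1+1,\dots,x_n+1)$ (your $\Phi^{-1}$) carries the vertex set of $\mathcal P(n,n-1)$ to that of $\PF_n$. Your extra bookkeeping on index matching and lattice preservation is fine but more than the paper writes out.
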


\begin{proof}
Note that all vertices of $\mathcal{P}(n,n-1)$ given in Definition \ref{def:pp-vertices} map to all vertices of $\PF_n$ by the translation $(x_1, x_2, \ldots, x_n) \mapsto (x_1 +1, x_2 +1, \ldots, x_n+1)$.
\end{proof}

We can now establish the equivalence of several different formulas throughout the literature, as they count the normalized volume of the same polytope. 
Previously, \cite{AW} found a generating function and recursive formula for the volume of $\PF_n$, and \cite{Sto} found a closed-form volume formula which uses an alternating sum (inclusion-exclusion). Our new contribution to the following theorem, in part $(iii)$, gives a more simple closed-form volume formula, since it does not contain an alternating sum.

\begin{theorem}\label{thm:main_theorem}
The following are equivalent normalized volume formulas for the classical parking function polytope, where $\nVol(\PF_n) := n!V_n$ denotes the normalized volume:
\begin{enumerate}[(i)]
    \item From \cite{AW}, with $\nVol(\PF_{0})=1$ and $\nVol(\PF_{1})=0$, for $n \geq 2$ we have recursively, \[ \nVol(\PF_n) = (n-1)! \sum_{k=0}^{n-1} \binom{n}{k} \frac{(n-k)^{n-k-1}(n+k-1)}{2} \frac{\nVol(\PF_k)}{k!}.\]
    \item From \cite{BCC}, for $\mathcal{P}(n,n-1)$, with $\nVol(\PF_{0})=1$ and $\nVol(\PF_{1})=0$, for $n \geq 2$ we have recursively,
    \[ \nVol(\PF_n) = (n-1)! \sum_{k=1}^n k^{k-2} \frac{\nVol(\PF_{n-k})}{(n-k)!} \left(k(n-1)- \binom{k}{2}\right) \binom{n}{k}.\]
    \item  \[\nVol(\PF_n) = - \frac{n!}{2^n}\sum_{i=0}^n \binom{n}{i}(2i-3)!!(2n-1)^{n-i}.\]
    \item From \cite{Sto},
    \[\nVol(\PF_n) = n! \sum_{s=1}^n \binom{n-1}{s-1} \frac{n^{n-s}}{2^s} \sum_{i=0}^s (-1)^{s-i}\binom{s}{i}(2i-1)!!. \]
    \item From \cite{She}, equation (33), for $n \geq 2$, \[\nVol(\PF_n)=\frac{n!}{2^n}\sum_{i=0}^{n} (2i-1)(2i-1)!!\binom{n}{i}(2n-1)^{n-i-1}.\]
    \item From \cite{She}, equation (23), for $n \geq 2$, \[\nVol(\PF_n) = n!\frac{n-1}{2^{n-1}}\sum_{i=0}^{n-2}(2i+1)!!\binom{n-2}{i}(2n-1)^{n-i-2}. \]
    \item From \cite{She}, $\nVol(\PF_n)$ equals the number of $n \times n$ $(0,1)$-matrices with two 1's in each row that have positive permanent.
\end{enumerate}
\end{theorem}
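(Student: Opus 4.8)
The plan is to prove Theorem \ref{thm:main_theorem} by establishing a web of equivalences rather than proving each formula from scratch; many of items (i), (ii), (iv), (v), (vi) already appear in the cited literature, so the real content is to connect them all to one another and to the new closed form (iii). First I would take (i) as the anchor, since it is Amanbayeva and Wang's original recursion for $\nVol(\PF_n)$. Item (ii) is Behrend et al.'s recursion for the partial permutahedron $\mathcal{P}(n,n-1)$; by Proposition \ref{prop:classical-partial} the polytopes $\PF_n$ and $\mathcal{P}(n,n-1)$ are integrally equivalent, hence have equal normalized volume, so (ii) computes the same quantity and (i) $\Leftrightarrow$ (ii) is immediate. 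Items (iv) and (v) are Shephard-type formulas from \cite{She} that are stated there to compute the same volume, so it suffices to cite them; similarly (vi) is the combinatorial interpretation from \cite{She} via $(0,1)$-matrices with two ones per row and positive permanent, which again is asserted in the source to equal $\nVol(\PF_n)$. So the crux is to insert (iii), the new non-recursive closed form, into this chain.

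The key step is therefore to show that (iii), namely
\[
\nVol(\PF_n) \;=\; -\,\frac{n!}{2^n}\sum_{i=0}^n \binom{n}{i}(2i-3)!!\,(2n-1)^{n-i},
\]
agrees with one of the established formulas. The cleanest route I would take is to derive (iii) as the $a=b=1$ specialization of Theorem \ref{thm:generalized_closed_form_volume}: setting $a=b=1$ gives $\tfrac{2a-2}{b}=0$, so the summand $\bigl(2n-1+\tfrac{2a-2}{b}\bigr)^{n-i}$ becomes $(2n-1)^{n-i}$ and the prefactor $-n!(b/2)^n$ becomes $-n!/2^n$, reproducing (iii) verbatim. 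Thus once Theorem \ref{thm:generalized_closed_form_volume} is in hand, (iii) is a corollary and the only remaining task is to verify that the $a=b=1$ case of the generalized polytope $\mathcal{X}_n(1,1)$ is indeed $\PF_n$ — which follows directly from the definition of an $\bx$-parking function with $\bx=(1,1,\dots,1)$, since the condition $b_i \le i$ is exactly the classical one.

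Alternatively — and this is the check I would actually carry out to make the section self-contained without forward-referencing Section \ref{sec:x-parking} — one can match (iii) against (iv) directly: both are sums of the shape $\tfrac{n!}{2^n}\sum_i c_i \binom{n}{i}(2n-1)^{n-i}$ up to reindexing, and the identity
\[
-(2i-3)!! \;=\; (2i-1)(2i-1)!!\,(2n-1)^{-1}\big|_{\text{reindexed}}
\]
type relation can be verified by comparing coefficients of each power of $(2n-1)$, using $(2i-1)!! = (2i-1)(2i-3)!!$ and the sign convention $(-1)!! = 1$, $(-3)!! = -1$. The main obstacle I anticipate is bookkeeping with the double-factorial conventions for negative arguments (the term $i=0$ contributes $(-3)!!=-1$, the term $i=1$ contributes $(-1)!!=1$, and these are exactly what produce the overall minus sign), together with an off-by-one shift in the summation index between Shephard's form (iv)/(v) and the form (iii); once those conventions are pinned down, the equivalence is a routine termwise comparison. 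I would present this as: "Formulas (i) and (ii) count $\nVol(\PF_n)$ and $\nVol(\mathcal{P}(n,n-1))$ respectively, which are equal by Proposition \ref{prop:classical-partial}; formulas (iv), (v), (vi) are shown in \cite{She} to equal $\nVol(\PF_n)$; and formula (iii) is the $a=b=1$ case of Theorem \ref{thm:generalized_closed_form_volume}, which we prove in Section \ref{sec:x-parking}."
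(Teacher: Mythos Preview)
Your overall architecture matches the paper's: you correctly identify that $(i)\Leftrightarrow(ii)$ is Proposition \ref{prop:classical-partial}, that $(i)\Leftrightarrow(iii)$ follows by specializing Theorem \ref{thm:generalized_closed_form_volume} at $a=b=1$, and that $(iv)\Leftrightarrow(v)\Leftrightarrow(vi)$ is internal to \cite{She}. The gap is in linking the two clusters $\{(i),(ii),(iii)\}$ and $\{(iv),(v),(vi)\}$.

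Your first route---citing \cite{She} for the claim that $(iv),(v),(vi)$ already equal $\nVol(\PF_n)$---is not available: \cite{She} is about the $(0,1)$-matrix enumeration problem, and the paper only extracts $(iv)\Leftrightarrow(v)\Leftrightarrow(vi)$ from it. Establishing that this common value is $\nVol(\PF_n)$ is precisely the new content of the theorem, so it cannot be offloaded to the reference.

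Your second route---a ``routine termwise comparison'' of $(iii)$ and $(iv)$ by matching coefficients of each power of $(2n-1)$---does not work. If you align the sums by the exponent of $(2n-1)$, the coefficient of $(2n-1)^{n-j}$ coming from $(iii)$ is $-\binom{n}{j}(2j-3)!!$, while from $(iv)$ (after shifting $i\mapsto j-1$) it is $(2j-3)(2j-3)!!\binom{n}{j-1}$; these are not equal, as one sees already at $n=2$. The equality $(iii)=(iv)$ is instead equivalent to the vanishing of
\[
\sum_{i=0}^{n}\binom{n}{i}(2i-3)!!\,(2n-1)^{n-i-1}\bigl[(2i-1)^2+(2n-1)\bigr],
\]
which is a genuine hypergeometric identity, not a term-by-term cancellation. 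The paper handles this with Zeilberger's creative telescoping: setting $F(n,i)=\binom{n}{i}\frac{(2i)!}{2^i i!(2i-1)}(2n-1)^{n-i-1}[(2i-1)^2+(2n-1)]$, the algorithm produces the rational certificate $R(n,i)=\frac{(-2n+1)i}{2i^2-2i+n}$, and one verifies $F(n,i)=G(n,i+1)-G(n,i)$ with $G=R\cdot F$, whence the sum telescopes to $0$. You will need either this WZ step or an equivalent combinatorial/generating-function argument to close the gap; the double-factorial bookkeeping alone does not suffice.
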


\begin{proof}
Proposition \ref{prop:classical-partial} implies $(i) \iff (ii)$.
In the next section, we show Theorem \ref{thm:generalized_closed_form_volume}, which by taking $a=1$ and $b=1$ implies $(i) \iff (iii)$.

%%%%%%%

From the literature, $(v) \iff (vi) \iff (vii)$ is given in \cite{She}. Equation $(iv)$ is shown to be the normalized volume in \cite{Sto}, and hence is equivalent to $(i)$.

We finish by showing that $(iii) \iff (v)$. This is equivalent to showing that their difference is $0$. From $(v)$, subtract $(iii)$, which gives 
\begin{align*}
    &\frac{n!}{2^n}\sum_{i=0}^{n} (2i-1)(2i-1)!!\binom{n}{i}(2n-1)^{n-i-1}+ \frac{n!}{2^n}\sum_{i=0}^n \binom{n}{i}(2i-3)!!(2n-1)^{n-i}\\
    &= \frac{n!}{2^n}\sum_{i=0}^{n}\binom{n}{i}(2i-3)!!(2n-1)^{n-i-1}[(2i-1)^2 + (2n-1)]\\
    &= \frac{n!}{2^n}\sum_{i=0}^{n}\binom{n}{i}\frac{(2i)!}{2^i i! (2i-1)}(2n-1)^{n-i-1}[(2i-1)^2 + (2n-1)].
\end{align*}
We now use Wilf-Zeilberger theory. 
We need only show that $f(n) := \sum_{i=0}^n F(n,i)$ is $0$, where
\[ F(n,i) := \binom{n}{i}\frac{(2i)!}{2^i i! (2i-1)}(2n-1)^{n-i-1}[(2i-1)^2 + (2n-1)].\]
Note that as $F(n,i)$ contains a binomial coefficient, the sum $f(n)= \sum_{i \in \Z} F(n,i)$.
We use Zeilberger's creative telescoping algorithm \texttt{ct} in the package \texttt{EKHAD} as described in Chapter 6.5 of \cite{PWZ} and available from \cite{Zei}. Calling
\begin{center}
\begin{BVerbatim}
ct(binomial(n,i)*(2*i)!*(2*n-1)^(n-i-1)*
((2*i-1)^2+(2*n-1))/(2^i*i!*(2*i-1)),0,i,n,N);
\end{BVerbatim}
\end{center}
in Maple gives output $1 \cdot f(n) + 0 \cdot f(n+1)= 0$, that is $f(n) = 0$, with certificate $R(n,i) = (-2n+1)i/(2i^2-2i+n)$. If we wish to check our solution, let $G(n,i) := R(n,i)\cdot F(n,i)$. 
Then one can verify that $1 = (G(n,i+1) - G(n,i))/F(n,i)$ via some algebra.
\end{proof}

The sequence for the normalized volume of $\PF_n$ begins \[0, 1, 24, 954, 59040, 5295150, 651354480, 105393619800, 21717404916480, \ldots\] and is OEIS sequence \href{http://oeis.org/A174586}{A174586} \cite{OEIS}.
%%%%%%%%%%%%%%%%%%%%%%%%%%%%%%%%%%%%%%%%%%%%%%%%%%%%%%%%%%%%%%%%%%%%%

\section{The convex hull of \texorpdfstring{$\mathbf{x}$}{\textbf{x}}-parking functions}\label{sec:x-parking}

Next, we discuss a generalization of the classical parking functions.
Let $\bx=(x_1,\dots,x_n)\in \Z_{>0}^n$. 
Define an $\bx$\emph{-parking function} to be a sequence $(a_1,\dots,a_n)$ of positive integers whose nondecreasing rearrangement $b_1\leq b_2\leq \cdots \leq b_n$ satisfies $b_i\leq x_1+\cdots + x_i$. 

Throughout Section \ref{sec:x-parking}, we will specialize to vectors of the form $\bx=(a,b,b,\dots,b)$ following the work of Yan \cite{Yan2, Yan}. Additionally the volume calculations in Section \ref{sec:volume} become more difficult for arbitrary $\bx=(x_1,\dots,x_n)$, which we discuss more in Section \ref{sec:general-x}. On the other hand, the vertex and hyperplane descriptions along with some of the basic enumerative results can be easily generalized to the $\bx=(x_1,\dots,x_n)$ setting, and we leave those details to the interested reader.  

As mentioned in \cite{Yan}, from work of Pitman and Stanley \cite{PitmanStanley}, the number of $\bx$-parking functions for $\bx=(a,b,b,\ldots,b)$ is the following:

\begin{theorem}[Theorem 1, \cite{Yan}]\label{thm:number_x-parking}
For $\mathbf{x}=(a,b,b,\ldots,b) \in \Z_{>0}^n$, the number of $\bx$-parking functions is given by $a(a+nb)^{n-1}$.
\end{theorem}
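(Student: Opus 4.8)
The plan is to prove the count $a(a+nb)^{n-1}$ for $\bx = (a,b,b,\ldots,b)$ by a variant of Pollak's circle argument, adapted to this near-uniform weight vector. First I would recall why the count depends only on the multiset of values: a sequence $(a_1,\ldots,a_n)$ is an $\bx$-parking function exactly when, for its nondecreasing rearrangement $b_1 \le \cdots \le b_n$, we have $b_i \le a + (i-1)b$ for all $i$, which is a symmetric condition. So it suffices to count the number of nondecreasing $\bx$-parking functions weighted by the number of rearrangements, or equivalently to set up a bijective/averaging argument that is insensitive to the order of coordinates.

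The key step is the Pollak-type argument. Consider sequences $(c_1,\ldots,c_n)$ of integers with $1 \le c_i \le a + nb$, placed on a cyclic group $\Z/(a+nb)\Z$ of size $a+nb$; there are $(a+nb)^n$ such sequences, but to get the asymmetry built in by the first coordinate having bound $a$ rather than $b$, I would instead work on a cycle of size $a+nb$ with $n$ "cars" and distinguished structure reflecting the single parameter $a$. Concretely, one shows that adding a constant shift mod $a+nb$ partitions all length-$n$ sequences over a suitable ground set into orbits, and that exactly a controlled number of representatives in each orbit are $\bx$-parking functions; counting orbit sizes against the number of valid representatives yields $a(a+nb)^{n-1}$. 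The cleanest route is probably the one in Yan's survey \cite{Yan} or Pitman--Stanley \cite{PitmanStanley}: interpret $\bx$-parking functions via lattice paths / the volume of a Pitman--Stanley-type polytope, or use the "cycle lemma" with the weight vector $(a,b,\ldots,b)$ directly, where the $a+nb$ comes from $a + b + b + \cdots + b$ (total capacity), and the factor $a$ counts the number of cyclic rotations that land in the valid region.

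The main obstacle is handling the inhomogeneity: in the classical case $\bx=(1,1,\ldots,1)$ all $n+1$ cyclic shifts are symmetric and exactly one per orbit is a parking function, giving $(n+1)^{n-1}$; here the first capacity $a$ breaks that symmetry, so I expect the delicate part to be showing that exactly $a$ out of the $a+nb$ shifts of each generic sequence give a valid $\bx$-parking function (with appropriate care for degenerate orbits, which a standard density/continuity or generic-perturbation argument rules out). An alternative that sidesteps the cycle lemma bookkeeping is induction on $n$: condition on the multiset position of the smallest value and peel off one coordinate, reducing $(a, b, \ldots, b)$ of length $n$ to $(a+b, b, \ldots, b)$ or $(a, b, \ldots, b)$ of length $n-1$, then verify the resulting recurrence is solved by $a(a+nb)^{n-1}$ via the Abel-type binomial identity $\sum_k \binom{n}{k} a(a+kb)^{k-1}(\cdots)^{n-k} = \cdots$. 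Either way, since this is a known result of Pitman--Stanley quoted as Theorem \ref{thm:number_x-parking}, I would simply cite \cite{Yan, PitmanStanley} and give the one-paragraph cycle-lemma sketch rather than a full self-contained proof.
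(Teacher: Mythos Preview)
The paper does not prove this theorem at all: it is stated with attribution ``[Theorem 1, \cite{Yan}]'' and used as a known input, with no argument given. Your final recommendation---to cite \cite{Yan, PitmanStanley} and at most sketch the cycle-lemma idea---is therefore exactly what the paper does (minus the sketch). The Pollak-type argument you outline is indeed one of the standard proofs, and your diagnosis of the key point (that exactly $a$ of the $a+nb$ cyclic shifts yield a valid $\bx$-parking function) is correct, so nothing in your proposal is wrong; it simply goes beyond what the paper supplies.
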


The following definition introduces an  $n$-dimensional polytope associated to $\bx$-parking functions of length $n$ for the specific sequence $\bx=(a,b,b,\dots,b)\in \Z_{>0}^n$, which is one of the main objects of study in this paper. 

\begin{definition}
Define the \emph{$\bx$-parking function polytope} $\Xn$ as the convex hull of all $\bx$-parking functions of length $n$ in $\R^n$ for $\bx=(a,b,b,\dots,b)\in \Z_{>0}^n$. See Figure \ref{fig:x-pfs} for examples. 
\end{definition}

\begin{remark}\label{rem:n=1}
Note that if $n=1$, $\bx = (a)$, so no $b$ is used. 
As a result, we may denote the $\bx$-parking function polytope $\X_{1}(a,b)$ alternatively by $\X_{1}(a)$.
\end{remark}

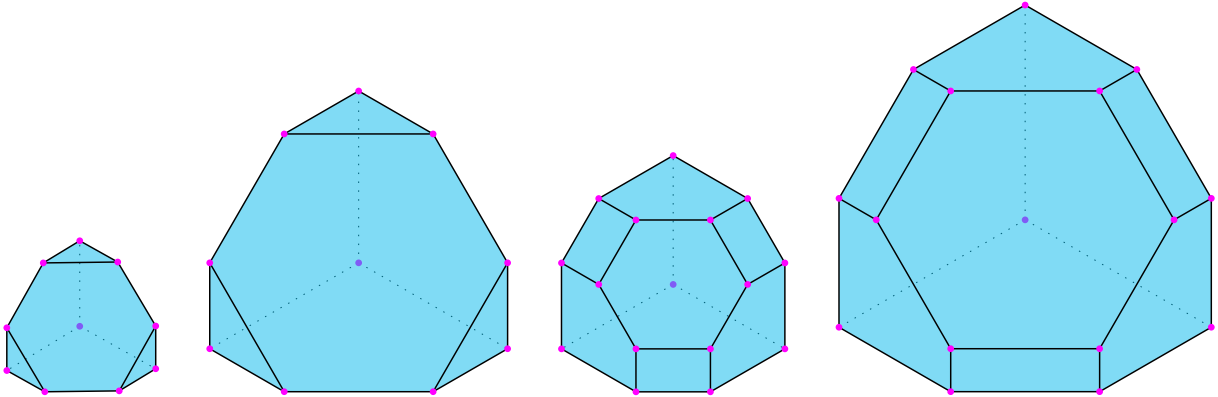
\begin{figure}[h]
    \centering
    \scalebox{.7}{
    \begin{tikzpicture}%
	[x={(-0.692569cm, -0.420596cm)},
	y={(0.721351cm, -0.403777cm)},
	z={(-0.000033cm, 0.812443cm)},
	scale=1.000000,
	back/.style={loosely dotted, thin},
	edge/.style={color=black, thick},
	facet/.style={fill=andresblue,fill opacity=0.500000},
	vertex/.style={inner sep=1pt,circle,draw=andrespink,fill=andrespink,thick}]

\coordinate (1.00000, 1.00000, 1.00000) at (1.00000, 1.00000, 1.00000);
\coordinate (3.00000, 2.00000, 1.00000) at (3.00000, 2.00000, 1.00000);
\coordinate (1.00000, 1.00000, 3.00000) at (1.00000, 1.00000, 3.00000);
\coordinate (3.00000, 1.00000, 2.00000) at (3.00000, 1.00000, 2.00000);
\coordinate (3.00000, 1.00000, 1.00000) at (3.00000, 1.00000, 1.00000);
\coordinate (1.00000, 2.00000, 3.00000) at (1.00000, 2.00000, 3.00000);
\coordinate (1.00000, 3.00000, 1.00000) at (1.00000, 3.00000, 1.00000);
\coordinate (1.00000, 3.00000, 2.00000) at (1.00000, 3.00000, 2.00000);
\coordinate (2.00000, 3.00000, 1.00000) at (2.00000, 3.00000, 1.00000);
\coordinate (2.00000, 1.00000, 3.00000) at (2.00000, 1.00000, 3.00000);
%%
%%
%% Drawing edges in the back
%%
\draw[edge,back] (1.00000, 1.00000, 1.00000) -- (1.00000, 1.00000, 3.00000);
\draw[edge,back] (1.00000, 1.00000, 1.00000) -- (3.00000, 1.00000, 1.00000);
\draw[edge,back] (1.00000, 1.00000, 1.00000) -- (1.00000, 3.00000, 1.00000);
%%
%%
%% Drawing vertices in the back
%%
\node[vertex] at (1.00000, 1.00000, 1.00000)     {};
%%
%%
%% Drawing the facets
%%
\fill[facet] (1.00000, 3.00000, 2.00000) -- (1.00000, 2.00000, 3.00000) -- (2.00000, 1.00000, 3.00000) -- (3.00000, 1.00000, 2.00000) -- (3.00000, 2.00000, 1.00000) -- (2.00000, 3.00000, 1.00000) -- cycle {};
\fill[facet] (2.00000, 3.00000, 1.00000) -- (1.00000, 3.00000, 1.00000) -- (1.00000, 3.00000, 2.00000) -- cycle {};
\fill[facet] (2.00000, 1.00000, 3.00000) -- (1.00000, 1.00000, 3.00000) -- (1.00000, 2.00000, 3.00000) -- cycle {};
\fill[facet] (3.00000, 1.00000, 1.00000) -- (3.00000, 2.00000, 1.00000) -- (3.00000, 1.00000, 2.00000) -- cycle {};
%%
%%
%% Drawing edges in the front
%%
\draw[edge] (3.00000, 2.00000, 1.00000) -- (3.00000, 1.00000, 2.00000);
\draw[edge] (3.00000, 2.00000, 1.00000) -- (3.00000, 1.00000, 1.00000);
\draw[edge] (3.00000, 2.00000, 1.00000) -- (2.00000, 3.00000, 1.00000);
\draw[edge] (1.00000, 1.00000, 3.00000) -- (1.00000, 2.00000, 3.00000);
\draw[edge] (1.00000, 1.00000, 3.00000) -- (2.00000, 1.00000, 3.00000);
\draw[edge] (3.00000, 1.00000, 2.00000) -- (3.00000, 1.00000, 1.00000);
\draw[edge] (3.00000, 1.00000, 2.00000) -- (2.00000, 1.00000, 3.00000);
\draw[edge] (1.00000, 2.00000, 3.00000) -- (1.00000, 3.00000, 2.00000);
\draw[edge] (1.00000, 2.00000, 3.00000) -- (2.00000, 1.00000, 3.00000);
\draw[edge] (1.00000, 3.00000, 1.00000) -- (1.00000, 3.00000, 2.00000);
\draw[edge] (1.00000, 3.00000, 1.00000) -- (2.00000, 3.00000, 1.00000);
\draw[edge] (1.00000, 3.00000, 2.00000) -- (2.00000, 3.00000, 1.00000);
%%
%%
%% Drawing the vertices in the front
%%
\node[vertex] at (3.00000, 2.00000, 1.00000)     {};
\node[vertex] at (1.00000, 1.00000, 3.00000)     {};
\node[vertex] at (3.00000, 1.00000, 2.00000)     {};
\node[vertex] at (3.00000, 1.00000, 1.00000)     {};
\node[vertex] at (1.00000, 2.00000, 3.00000)     {};
\node[vertex] at (1.00000, 3.00000, 1.00000)     {};
\node[vertex] at (1.00000, 3.00000, 2.00000)     {};
\node[vertex] at (2.00000, 3.00000, 1.00000)     {};
\node[vertex] at (2.00000, 1.00000, 3.00000)     {};
\end{tikzpicture}
\qquad
\begin{tikzpicture}%
	[x={(-0.707031cm, -0.408259cm)},
	y={(0.707183cm, -0.408200cm)},
	z={(0.000025cm, 0.816516cm)},
	scale=1.000000,
	back/.style={loosely dotted, thin},
	edge/.style={color=black, thick},
	facet/.style={fill=andresblue,fill opacity=0.500000},
	vertex/.style={inner sep=1pt,circle,draw=andrespink,fill=andrespink,thick}]
\coordinate (1.00000, 1.00000, 1.00000) at (1.00000, 1.00000, 1.00000);
\coordinate (5.00000, 3.00000, 1.00000) at (5.00000, 3.00000, 1.00000);
\coordinate (5.00000, 1.00000, 3.00000) at (5.00000, 1.00000, 3.00000);
\coordinate (5.00000, 1.00000, 1.00000) at (5.00000, 1.00000, 1.00000);
\coordinate (1.00000, 1.00000, 5.00000) at (1.00000, 1.00000, 5.00000);
\coordinate (3.00000, 5.00000, 1.00000) at (3.00000, 5.00000, 1.00000);
\coordinate (3.00000, 1.00000, 5.00000) at (3.00000, 1.00000, 5.00000);
\coordinate (1.00000, 5.00000, 3.00000) at (1.00000, 5.00000, 3.00000);
\coordinate (1.00000, 5.00000, 1.00000) at (1.00000, 5.00000, 1.00000);
\coordinate (1.00000, 3.00000, 5.00000) at (1.00000, 3.00000, 5.00000);
%%
%%
%% Drawing edges in the back
%%
\draw[edge,back] (1.00000, 1.00000, 1.00000) -- (5.00000, 1.00000, 1.00000);
\draw[edge,back] (1.00000, 1.00000, 1.00000) -- (1.00000, 1.00000, 5.00000);
\draw[edge,back] (1.00000, 1.00000, 1.00000) -- (1.00000, 5.00000, 1.00000);
%%
%%
%% Drawing vertices in the back
%%
\node[vertex] at (1.00000, 1.00000, 1.00000)     {};
%%
%%
%% Drawing the facets
%%
\fill[facet] (1.00000, 3.00000, 5.00000) -- (3.00000, 1.00000, 5.00000) -- (5.00000, 1.00000, 3.00000) -- (5.00000, 3.00000, 1.00000) -- (3.00000, 5.00000, 1.00000) -- (1.00000, 5.00000, 3.00000) -- cycle {};
\fill[facet] (5.00000, 1.00000, 1.00000) -- (5.00000, 3.00000, 1.00000) -- (5.00000, 1.00000, 3.00000) -- cycle {};
\fill[facet] (1.00000, 5.00000, 1.00000) -- (3.00000, 5.00000, 1.00000) -- (1.00000, 5.00000, 3.00000) -- cycle {};
\fill[facet] (1.00000, 3.00000, 5.00000) -- (1.00000, 1.00000, 5.00000) -- (3.00000, 1.00000, 5.00000) -- cycle {};
%%
%%
%% Drawing edges in the front
%%
\draw[edge] (5.00000, 3.00000, 1.00000) -- (5.00000, 1.00000, 3.00000);
\draw[edge] (5.00000, 3.00000, 1.00000) -- (5.00000, 1.00000, 1.00000);
\draw[edge] (5.00000, 3.00000, 1.00000) -- (3.00000, 5.00000, 1.00000);
\draw[edge] (5.00000, 1.00000, 3.00000) -- (5.00000, 1.00000, 1.00000);
\draw[edge] (5.00000, 1.00000, 3.00000) -- (3.00000, 1.00000, 5.00000);
\draw[edge] (1.00000, 1.00000, 5.00000) -- (3.00000, 1.00000, 5.00000);
\draw[edge] (1.00000, 1.00000, 5.00000) -- (1.00000, 3.00000, 5.00000);
\draw[edge] (3.00000, 5.00000, 1.00000) -- (1.00000, 5.00000, 3.00000);
\draw[edge] (3.00000, 5.00000, 1.00000) -- (1.00000, 5.00000, 1.00000);
\draw[edge] (3.00000, 1.00000, 5.00000) -- (1.00000, 3.00000, 5.00000);
\draw[edge] (1.00000, 5.00000, 3.00000) -- (1.00000, 5.00000, 1.00000);
\draw[edge] (1.00000, 5.00000, 3.00000) -- (1.00000, 3.00000, 5.00000);
%%
%%
%% Drawing the vertices in the front
%%
\node[vertex] at (5.00000, 3.00000, 1.00000)     {};
\node[vertex] at (5.00000, 1.00000, 3.00000)     {};
\node[vertex] at (5.00000, 1.00000, 1.00000)     {};
\node[vertex] at (1.00000, 1.00000, 5.00000)     {};
\node[vertex] at (3.00000, 5.00000, 1.00000)     {};
\node[vertex] at (3.00000, 1.00000, 5.00000)     {};
\node[vertex] at (1.00000, 5.00000, 3.00000)     {};
\node[vertex] at (1.00000, 5.00000, 1.00000)     {};
\node[vertex] at (1.00000, 3.00000, 5.00000)     {};
\end{tikzpicture}
\qquad
\begin{tikzpicture}%
	[x={(-0.707031cm, -0.408259cm)},
	y={(0.707183cm, -0.408200cm)},
	z={(0.000025cm, 0.816516cm)},
	scale=1.000000,
	back/.style={loosely dotted, thin},
	edge/.style={color=black, thick},
	facet/.style={fill=andresblue,fill opacity=0.500000},
	vertex/.style={inner sep=1pt,circle,draw=andrespink,fill=andrespink,thick}]
%
%

%% Coordinate of the vertices:
%%
\coordinate (1.00000, 1.00000, 1.00000) at (1.00000, 1.00000, 1.00000);
\coordinate (4.00000, 3.00000, 2.00000) at (4.00000, 3.00000, 2.00000);
\coordinate (4.00000, 3.00000, 1.00000) at (4.00000, 3.00000, 1.00000);
\coordinate (1.00000, 1.00000, 4.00000) at (1.00000, 1.00000, 4.00000);
\coordinate (4.00000, 2.00000, 3.00000) at (4.00000, 2.00000, 3.00000);
\coordinate (4.00000, 1.00000, 3.00000) at (4.00000, 1.00000, 3.00000);
\coordinate (4.00000, 1.00000, 1.00000) at (4.00000, 1.00000, 1.00000);
\coordinate (3.00000, 4.00000, 2.00000) at (3.00000, 4.00000, 2.00000);
\coordinate (3.00000, 4.00000, 1.00000) at (3.00000, 4.00000, 1.00000);
\coordinate (3.00000, 2.00000, 4.00000) at (3.00000, 2.00000, 4.00000);
\coordinate (3.00000, 1.00000, 4.00000) at (3.00000, 1.00000, 4.00000);
\coordinate (1.00000, 3.00000, 4.00000) at (1.00000, 3.00000, 4.00000);
\coordinate (1.00000, 4.00000, 1.00000) at (1.00000, 4.00000, 1.00000);
\coordinate (2.00000, 4.00000, 3.00000) at (2.00000, 4.00000, 3.00000);
\coordinate (1.00000, 4.00000, 3.00000) at (1.00000, 4.00000, 3.00000);
\coordinate (2.00000, 3.00000, 4.00000) at (2.00000, 3.00000, 4.00000);
%%
%%
%% Drawing edges in the back
%%
\draw[edge,back] (1.00000, 1.00000, 1.00000) -- (1.00000, 1.00000, 4.00000);
\draw[edge,back] (1.00000, 1.00000, 1.00000) -- (4.00000, 1.00000, 1.00000);
\draw[edge,back] (1.00000, 1.00000, 1.00000) -- (1.00000, 4.00000, 1.00000);
%%
%%
%% Drawing vertices in the back
%%
\node[vertex] at (1.00000, 1.00000, 1.00000)     {};
%%
%%
%% Drawing the facets
%%
\fill[facet] (4.00000, 1.00000, 1.00000) -- (4.00000, 3.00000, 1.00000) -- (4.00000, 3.00000, 2.00000) -- (4.00000, 2.00000, 3.00000) -- (4.00000, 1.00000, 3.00000) -- cycle {};
\fill[facet] (2.00000, 3.00000, 4.00000) -- (3.00000, 2.00000, 4.00000) -- (4.00000, 2.00000, 3.00000) -- (4.00000, 3.00000, 2.00000) -- (3.00000, 4.00000, 2.00000) -- (2.00000, 4.00000, 3.00000) -- cycle {};
\fill[facet] (3.00000, 4.00000, 1.00000) -- (4.00000, 3.00000, 1.00000) -- (4.00000, 3.00000, 2.00000) -- (3.00000, 4.00000, 2.00000) -- cycle {};
\fill[facet] (3.00000, 1.00000, 4.00000) -- (4.00000, 1.00000, 3.00000) -- (4.00000, 2.00000, 3.00000) -- (3.00000, 2.00000, 4.00000) -- cycle {};
\fill[facet] (2.00000, 3.00000, 4.00000) -- (3.00000, 2.00000, 4.00000) -- (3.00000, 1.00000, 4.00000) -- (1.00000, 1.00000, 4.00000) -- (1.00000, 3.00000, 4.00000) -- cycle {};
\fill[facet] (2.00000, 4.00000, 3.00000) -- (2.00000, 3.00000, 4.00000) -- (1.00000, 3.00000, 4.00000) -- (1.00000, 4.00000, 3.00000) -- cycle {};
\fill[facet] (1.00000, 4.00000, 3.00000) -- (1.00000, 4.00000, 1.00000) -- (3.00000, 4.00000, 1.00000) -- (3.00000, 4.00000, 2.00000) -- (2.00000, 4.00000, 3.00000) -- cycle {};
%%
%%
%% Drawing edges in the front
%%
\draw[edge] (4.00000, 3.00000, 2.00000) -- (4.00000, 3.00000, 1.00000);
\draw[edge] (4.00000, 3.00000, 2.00000) -- (4.00000, 2.00000, 3.00000);
\draw[edge] (4.00000, 3.00000, 2.00000) -- (3.00000, 4.00000, 2.00000);
\draw[edge] (4.00000, 3.00000, 1.00000) -- (4.00000, 1.00000, 1.00000);
\draw[edge] (4.00000, 3.00000, 1.00000) -- (3.00000, 4.00000, 1.00000);
\draw[edge] (1.00000, 1.00000, 4.00000) -- (3.00000, 1.00000, 4.00000);
\draw[edge] (1.00000, 1.00000, 4.00000) -- (1.00000, 3.00000, 4.00000);
\draw[edge] (4.00000, 2.00000, 3.00000) -- (4.00000, 1.00000, 3.00000);
\draw[edge] (4.00000, 2.00000, 3.00000) -- (3.00000, 2.00000, 4.00000);
\draw[edge] (4.00000, 1.00000, 3.00000) -- (4.00000, 1.00000, 1.00000);
\draw[edge] (4.00000, 1.00000, 3.00000) -- (3.00000, 1.00000, 4.00000);
\draw[edge] (3.00000, 4.00000, 2.00000) -- (3.00000, 4.00000, 1.00000);
\draw[edge] (3.00000, 4.00000, 2.00000) -- (2.00000, 4.00000, 3.00000);
\draw[edge] (3.00000, 4.00000, 1.00000) -- (1.00000, 4.00000, 1.00000);
\draw[edge] (3.00000, 2.00000, 4.00000) -- (3.00000, 1.00000, 4.00000);
\draw[edge] (3.00000, 2.00000, 4.00000) -- (2.00000, 3.00000, 4.00000);
\draw[edge] (1.00000, 3.00000, 4.00000) -- (1.00000, 4.00000, 3.00000);
\draw[edge] (1.00000, 3.00000, 4.00000) -- (2.00000, 3.00000, 4.00000);
\draw[edge] (1.00000, 4.00000, 1.00000) -- (1.00000, 4.00000, 3.00000);
\draw[edge] (2.00000, 4.00000, 3.00000) -- (1.00000, 4.00000, 3.00000);
\draw[edge] (2.00000, 4.00000, 3.00000) -- (2.00000, 3.00000, 4.00000);
%%
%%
%% Drawing the vertices in the front
%%
\node[vertex] at (4.00000, 3.00000, 2.00000)     {};
\node[vertex] at (4.00000, 3.00000, 1.00000)     {};
\node[vertex] at (1.00000, 1.00000, 4.00000)     {};
\node[vertex] at (4.00000, 2.00000, 3.00000)     {};
\node[vertex] at (4.00000, 1.00000, 3.00000)     {};
\node[vertex] at (4.00000, 1.00000, 1.00000)     {};
\node[vertex] at (3.00000, 4.00000, 2.00000)     {};
\node[vertex] at (3.00000, 4.00000, 1.00000)     {};
\node[vertex] at (3.00000, 2.00000, 4.00000)     {};
\node[vertex] at (3.00000, 1.00000, 4.00000)     {};
\node[vertex] at (1.00000, 3.00000, 4.00000)     {};
\node[vertex] at (1.00000, 4.00000, 1.00000)     {};
\node[vertex] at (2.00000, 4.00000, 3.00000)     {};
\node[vertex] at (1.00000, 4.00000, 3.00000)     {};
\node[vertex] at (2.00000, 3.00000, 4.00000)     {};
\end{tikzpicture}
\qquad
\begin{tikzpicture}%
	[x={(-0.707031cm, -0.408259cm)},
	y={(0.707183cm, -0.408200cm)},
	z={(0.000025cm, 0.816516cm)},
	scale=1.000000,
	back/.style={loosely dotted, thin},
	edge/.style={color=black, thick},
	facet/.style={fill=andresblue,fill opacity=0.500000},
	vertex/.style={inner sep=1pt,circle,draw=andrespink,fill=andrespink,thick}]
\coordinate (1.00000, 1.00000, 1.00000) at (1.00000, 1.00000, 1.00000);
\coordinate (6.00000, 4.00000, 2.00000) at (6.00000, 4.00000, 2.00000);
\coordinate (6.00000, 4.00000, 1.00000) at (6.00000, 4.00000, 1.00000);
\coordinate (6.00000, 2.00000, 4.00000) at (6.00000, 2.00000, 4.00000);
\coordinate (6.00000, 1.00000, 4.00000) at (6.00000, 1.00000, 4.00000);
\coordinate (1.00000, 1.00000, 6.00000) at (1.00000, 1.00000, 6.00000);
\coordinate (6.00000, 1.00000, 1.00000) at (6.00000, 1.00000, 1.00000);
\coordinate (4.00000, 6.00000, 2.00000) at (4.00000, 6.00000, 2.00000);
\coordinate (4.00000, 6.00000, 1.00000) at (4.00000, 6.00000, 1.00000);
\coordinate (4.00000, 2.00000, 6.00000) at (4.00000, 2.00000, 6.00000);
\coordinate (4.00000, 1.00000, 6.00000) at (4.00000, 1.00000, 6.00000);
\coordinate (2.00000, 6.00000, 4.00000) at (2.00000, 6.00000, 4.00000);
\coordinate (2.00000, 4.00000, 6.00000) at (2.00000, 4.00000, 6.00000);
\coordinate (1.00000, 6.00000, 4.00000) at (1.00000, 6.00000, 4.00000);
\coordinate (1.00000, 6.00000, 1.00000) at (1.00000, 6.00000, 1.00000);
\coordinate (1.00000, 4.00000, 6.00000) at (1.00000, 4.00000, 6.00000);
%%
%%
%% Drawing edges in the back
%%
\draw[edge,back] (1.00000, 1.00000, 1.00000) -- (1.00000, 1.00000, 6.00000);
\draw[edge,back] (1.00000, 1.00000, 1.00000) -- (6.00000, 1.00000, 1.00000);
\draw[edge,back] (1.00000, 1.00000, 1.00000) -- (1.00000, 6.00000, 1.00000);
%%
%%
%% Drawing vertices in the back
%%
\node[vertex] at (1.00000, 1.00000, 1.00000)     {};
%%
%%
%% Drawing the facets
%%
\fill[facet] (6.00000, 1.00000, 1.00000) -- (6.00000, 4.00000, 1.00000) -- (6.00000, 4.00000, 2.00000) -- (6.00000, 2.00000, 4.00000) -- (6.00000, 1.00000, 4.00000) -- cycle {};
\fill[facet] (2.00000, 4.00000, 6.00000) -- (4.00000, 2.00000, 6.00000) -- (6.00000, 2.00000, 4.00000) -- (6.00000, 4.00000, 2.00000) -- (4.00000, 6.00000, 2.00000) -- (2.00000, 6.00000, 4.00000) -- cycle {};
\fill[facet] (4.00000, 6.00000, 1.00000) -- (6.00000, 4.00000, 1.00000) -- (6.00000, 4.00000, 2.00000) -- (4.00000, 6.00000, 2.00000) -- cycle {};
\fill[facet] (1.00000, 6.00000, 1.00000) -- (4.00000, 6.00000, 1.00000) -- (4.00000, 6.00000, 2.00000) -- (2.00000, 6.00000, 4.00000) -- (1.00000, 6.00000, 4.00000) -- cycle {};
\fill[facet] (1.00000, 4.00000, 6.00000) -- (2.00000, 4.00000, 6.00000) -- (2.00000, 6.00000, 4.00000) -- (1.00000, 6.00000, 4.00000) -- cycle {};
\fill[facet] (1.00000, 4.00000, 6.00000) -- (1.00000, 1.00000, 6.00000) -- (4.00000, 1.00000, 6.00000) -- (4.00000, 2.00000, 6.00000) -- (2.00000, 4.00000, 6.00000) -- cycle {};
\fill[facet] (4.00000, 1.00000, 6.00000) -- (6.00000, 1.00000, 4.00000) -- (6.00000, 2.00000, 4.00000) -- (4.00000, 2.00000, 6.00000) -- cycle {};
%%
%%
%% Drawing edges in the front
%%
\draw[edge] (6.00000, 4.00000, 2.00000) -- (6.00000, 4.00000, 1.00000);
\draw[edge] (6.00000, 4.00000, 2.00000) -- (6.00000, 2.00000, 4.00000);
\draw[edge] (6.00000, 4.00000, 2.00000) -- (4.00000, 6.00000, 2.00000);
\draw[edge] (6.00000, 4.00000, 1.00000) -- (6.00000, 1.00000, 1.00000);
\draw[edge] (6.00000, 4.00000, 1.00000) -- (4.00000, 6.00000, 1.00000);
\draw[edge] (6.00000, 2.00000, 4.00000) -- (6.00000, 1.00000, 4.00000);
\draw[edge] (6.00000, 2.00000, 4.00000) -- (4.00000, 2.00000, 6.00000);
\draw[edge] (6.00000, 1.00000, 4.00000) -- (6.00000, 1.00000, 1.00000);
\draw[edge] (6.00000, 1.00000, 4.00000) -- (4.00000, 1.00000, 6.00000);
\draw[edge] (1.00000, 1.00000, 6.00000) -- (4.00000, 1.00000, 6.00000);
\draw[edge] (1.00000, 1.00000, 6.00000) -- (1.00000, 4.00000, 6.00000);
\draw[edge] (4.00000, 6.00000, 2.00000) -- (4.00000, 6.00000, 1.00000);
\draw[edge] (4.00000, 6.00000, 2.00000) -- (2.00000, 6.00000, 4.00000);
\draw[edge] (4.00000, 6.00000, 1.00000) -- (1.00000, 6.00000, 1.00000);
\draw[edge] (4.00000, 2.00000, 6.00000) -- (4.00000, 1.00000, 6.00000);
\draw[edge] (4.00000, 2.00000, 6.00000) -- (2.00000, 4.00000, 6.00000);
\draw[edge] (2.00000, 6.00000, 4.00000) -- (2.00000, 4.00000, 6.00000);
\draw[edge] (2.00000, 6.00000, 4.00000) -- (1.00000, 6.00000, 4.00000);
\draw[edge] (2.00000, 4.00000, 6.00000) -- (1.00000, 4.00000, 6.00000);
\draw[edge] (1.00000, 6.00000, 4.00000) -- (1.00000, 6.00000, 1.00000);
\draw[edge] (1.00000, 6.00000, 4.00000) -- (1.00000, 4.00000, 6.00000);
%%
%%
%% Drawing the vertices in the front
%%
\node[vertex] at (6.00000, 4.00000, 2.00000)     {};
\node[vertex] at (6.00000, 4.00000, 1.00000)     {};
\node[vertex] at (6.00000, 2.00000, 4.00000)     {};
\node[vertex] at (6.00000, 1.00000, 4.00000)     {};
\node[vertex] at (1.00000, 1.00000, 6.00000)     {};
\node[vertex] at (6.00000, 1.00000, 1.00000)     {};
\node[vertex] at (4.00000, 6.00000, 2.00000)     {};
\node[vertex] at (4.00000, 6.00000, 1.00000)     {};
\node[vertex] at (4.00000, 2.00000, 6.00000)     {};
\node[vertex] at (4.00000, 1.00000, 6.00000)     {};
\node[vertex] at (2.00000, 6.00000, 4.00000)     {};
\node[vertex] at (2.00000, 4.00000, 6.00000)     {};
\node[vertex] at (1.00000, 6.00000, 4.00000)     {};
\node[vertex] at (1.00000, 6.00000, 1.00000)     {};
\node[vertex] at (1.00000, 4.00000, 6.00000)     {};
\end{tikzpicture}
}
    \caption{The $\mathbf{x}$-parking function polytopes, from left to right: $\mathfrak{X}_3(1,1)$, $\mathfrak{X}_3(1,2)$, $\mathfrak{X}_3(2,1)$, and $\mathfrak{X}_3(2,2)$. Observe that $\mathfrak{X}_3(1,2)$ is a dilate of $\mathfrak{X}_3(1,1)$. Note that when $a>1$, there are new facets that do not appear when $a=1$.}
    \label{fig:x-pfs}
\end{figure}

%%%%%%%%%%%%%%%%%%%%%%%%%%%%%%%%%%%%%%%%%%%%%%%%%%%%%%%%%%%%%%%%%%%%%

\subsection{Face structure}

In this subsection we describe the face structure of the \emph{$\bx$-parking function polytope} $\Xn$.

From Theorem \ref{thm:number_x-parking}, we obtain an upper bound for the number of $\bx$-parking functions which arise as vertices of $\Xn$ since it is well-known that if a polytope can be written as the convex hull of a finite set of points, then the set contains all the vertices of the polytope (Proposition 2.2, \cite{Ziegler}). 
We now give a vertex description of $\Xn$.

\begin{proposition}\label{prop:abn_vertex}
The vertices of $\Xn$ are all permutations of 
\[ (\underbrace{1, \ldots, 1}_k, \underbrace{a+kb, a+(k+1)b, \ldots, a+(n-2)b, a+(n-1)b}_{n-k}),\] for all $0 \leq k \leq n$.
\end{proposition}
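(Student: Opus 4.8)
The plan is to prove the two inclusions: every listed vector is a vertex of $\Xn$, and conversely every vertex of $\Xn$ is a permutation of a listed vector. Since $\Xn$ is by definition the convex hull of the finite set of $\bx$-parking functions of length $n$, every vertex is such a function (Proposition 2.2 of \cite{Ziegler}, already invoked above), so the second inclusion amounts to showing that any $\bx$-parking function which is not a permutation of a listed vector is \emph{not} a vertex. Together the two inclusions also yield, as a bonus, that $\Xn$ is the convex hull of the listed vectors.

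First I would check that each listed vector is an $\bx$-parking function: its nondecreasing rearrangement is $(1,\dots,1,a+kb,a+(k+1)b,\dots,a+(n-1)b)$, and the condition $b_i\le a+(i-1)b$ holds for every $i$ (with equality in the last $n-k$ coordinates), using only $a,b\ge 1$. Now let $p$ be an $\bx$-parking function with nondecreasing rearrangement $b_1\le\cdots\le b_n$. If $b_i\in\{1,\,a+(i-1)b\}$ for every $i$, then, since the values $a+(i-1)b$ are strictly increasing and exceed $1$ for $i\ge 2$, the sorted vector must be of the form $(1,\dots,1,a+kb,\dots,a+(n-1)b)$ for some $k$, so $p$ is a permutation of a listed vector. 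Otherwise there is a \emph{defective} index $i$ with $1<b_i<a+(i-1)b$; take the \emph{largest} such $i$. Then $b_j=a+(j-1)b$ for all $j>i$, and one checks that increasing by $1$ a coordinate $p_t$ of $p$ equal to $b_i$ keeps $p$ an $\bx$-parking function (its sorted vector simply has $b_i$ raised to $b_i+1$, which stays $\le a+(i-1)b$ and below $b_{i+1}$ when $i<n$); decreasing that same coordinate by $1$ is harmless because lowering a single coordinate of an $\bx$-parking function, while keeping it $\ge 1$, can only decrease the sorted vector coordinatewise. Hence $p=\tfrac12(p+\ee_t)+\tfrac12(p-\ee_t)$ is a nontrivial convex combination of two distinct $\bx$-parking functions, so $p$ is not a vertex.

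It remains to prove that each listed vector is a vertex. Because permuting coordinates sends $\bx$-parking functions to $\bx$-parking functions, $\Xn$ is $S_n$-symmetric, so it suffices to treat the representative $v=(1,\dots,1,a+kb,\dots,a+(n-1)b)$. Suppose $v=\sum_q\lambda_q q$ with each $q$ an $\bx$-parking function $\ne v$ and each $\lambda_q>0$. On the $k$ coordinates where $v_i=1$, minimality of the value $1$ forces $q_i=1$ for every such $q$ and $i$; then $\sum_i v_i$ is the largest coordinate sum attained by any $\bx$-parking function agreeing with $v$ on those $k$ coordinates, so equality in the convex combination forces every $q$ to have sorted vector exactly $(1,\dots,1,a+kb,\dots,a+(n-1)b)$, i.e., to be a permutation of $v$; finally, reading the $n-k$ non-one coordinates of $v$ in order of position (they carry the strictly increasing values $a+kb,\dots,a+(n-1)b$) and comparing, for each $m$, the sum of the first $m$ of them forces, by induction on $m$, that every $q$ agrees with $v$ coordinate by coordinate, contradicting $q\ne v$. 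I expect this last step to be the main obstacle: the crux is producing the right linear functionals (the global coordinate sum, followed by the telescoping partial sums along the leading staircase coordinates) that single out $v$, and this is exactly where the precise staircase shape of the vertices is used. The exchange argument of the previous paragraph is comparatively routine, the one delicate point being the re-sorting, which is what dictates choosing the largest defective index.
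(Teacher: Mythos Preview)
Your argument is correct and follows essentially the same two-part strategy as the paper: showing that any $\bx$-parking function not of the listed form can be perturbed by $\pm\ee_t$ to exhibit it as a midpoint of two other $\bx$-parking functions, and then showing that each listed vector is extremal by exploiting minimality of the $1$'s and maximality of the staircase entries. Your treatment is in fact more careful than the paper's: you pin down the largest defective index to control the re-sorting after the $+\ee_t$ perturbation, and you spell out the linear functionals (coordinate sum, then successive coordinates of the staircase) that force any convex combination to be trivial, whereas the paper compresses this into a single sentence invoking minimality and maximality.
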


\begin{proof}
Consider an $\bx$-parking function $x = (x_1, \ldots, x_n)$ for which there is a term $x_i > 1$ such that $(x_1, \ldots, x_{i-1}, x_{i} + 1, x_{i+1}, \ldots, x_n)$ is also an $\bx$-parking function. 
Then $x$ is a convex combination of two other $\bx$-parking functions. 
Second, if \[ x= (\underbrace{1, \ldots, 1}_k, \underbrace{a+kb, a+(k+1)b, \ldots, a+(n-2)b, a+(n-1)b}_{n-k})\] is a convex combination of $y,z \in \Xn$, then $x=y=z$, as the first $k$ coordinates of $x$ are minimal at 1 and the last $(n-k)$ coordinates are maximized due to the condition on the nondecreasing rearrangement of $\bx$-parking functions.
Thus, $x$ is a vertex of $\Xn$.
\end{proof}

Next, we enumerate the vertices of $\Xn$. 
In the case that $a=1$, we have the same number of vertices as in the case of the classical parking function polytope $\PF_n$, as $b$ is a parameter that increases the lengths of the edges of the polytope.
However, when $a >1$, new vertices and edges come into play.

\begin{proposition}\label{thm:num_vertices}
The number of vertices of $\Xn$ is 
\[ \begin{cases} 
n!\left( \frac{1}{1!} + \cdots + \frac{1}{n!} \right) & \text{if } a = 1\\
n!\left( \frac{1}{0!} + \frac{1}{1!} + \cdots + \frac{1}{n!} \right) & \text{if } a > 1.
\end{cases}\]
\end{proposition}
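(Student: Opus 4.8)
The plan is to count, using Proposition~\ref{prop:abn_vertex}, the number of distinct vectors obtained by permuting the coordinates of the candidate vertices
$$v_k := (\underbrace{1,\ldots,1}_{k},\, a+kb,\, a+(k+1)b,\, \ldots,\, a+(n-1)b), \qquad 0 \le k \le n,$$
so that the vertex set of $\Xn$ is exactly $\bigcup_{k=0}^{n} \{\pi(v_k) : \pi \in S_n\}$. The whole argument reduces to (a) counting how many distinct permutations a single $v_k$ has, and (b) deciding when the $S_n$-orbits of $v_k$ and $v_{k'}$ overlap.

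For (a), I would first note that since $b \ge 1$ the ``large'' coordinates $a+kb < a+(k+1)b < \cdots < a+(n-1)b$ are pairwise distinct. If $k \ge 1$, the smallest of them is $a+kb \ge a+b \ge 2 > 1$, so $v_k$ has \emph{exactly} $k$ coordinates equal to $1$ and $n-k$ further, distinct coordinates; hence the number of distinct permutations of $v_k$ is $\binom{n}{k}(n-k)! = n!/k!$. For $k=0$ the vector $v_0 = (a, a+b, \ldots, a+(n-1)b)$ has $n$ distinct coordinates, so it has $n! = n!/0!$ distinct permutations; moreover $v_0$ has no coordinate equal to $1$ when $a>1$, whereas when $a=1$ one computes $v_0 = (1, 1+b, \ldots, 1+(n-1)b) = v_1$.

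For (b), the key observation is that a permutation of $v_k$ and a permutation of $v_{k'}$ can coincide only if they have the same number of coordinates equal to $1$. When $a>1$ this number is $0$ for $k=0$ and $k$ for $k \ge 1$, so $v_0, v_1, \ldots, v_n$ carry pairwise distinct numbers of $1$'s, their $S_n$-orbits are pairwise disjoint, and the vertex count is $\sum_{k=0}^{n} n!/k! = n!\bigl(\tfrac{1}{0!} + \tfrac{1}{1!} + \cdots + \tfrac{1}{n!}\bigr)$. When $a=1$, the only collision is $v_0 = v_1$ (same single $1$, same multiset of large entries), while for $k \ge 1$ the number of $1$'s is still $k$ and hence separates the orbits; so the vertex set is the disjoint union of the orbits of $v_1, v_2, \ldots, v_n$, giving $\sum_{k=1}^{n} n!/k! = n!\bigl(\tfrac{1}{1!} + \cdots + \tfrac{1}{n!}\bigr)$. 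The computations are routine; the only point that needs genuine care, and the reason the $a=1$ sum starts at $k=1$, is verifying that $v_0$ and $v_1$ coincide precisely when $a=1$ and that no other orbit collisions occur.
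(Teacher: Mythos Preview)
Your proof is correct and follows essentially the same approach as the paper: enumerate the $S_n$-orbits of the $v_k$, count each orbit as $n!/k!$, and observe that the only coincidence among the $v_k$ is $v_0 = v_1$ when $a=1$. If anything, you are slightly more explicit than the paper in justifying why the orbits are pairwise disjoint (via the number of coordinates equal to $1$), whereas the paper leaves this implicit.
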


\begin{proof}
The vertices are the permutations of the following: 
\begin{align*}
    v_0 &= (1,\ldots,1),\\
    v_1 &= (1,\ldots,1, a+(n-1)b),\\
    v_2 &= (1,\ldots,1, a+(n-2)b, a+(n-1)b),\\
    &\vdots\\
    v_{n-1} &= (1,a+b,\ldots, a+(n-1)b),\\
    v_{n} &= (a,a+b,\ldots, a+(n-1)b).
\end{align*}
Observe that there is one permutation of $v_0$, $n$ permutations of $v_1$, $n(n-1)$ permutations of $v_2$, and in general, $n(n-1)\cdots(n-k+1)$ permutations of $v_k$. 
If $a>1$, then the vertices $v_{n-1}$ and $v_n$ are distinct, so we count the contribution of both. 
However, if $a=1$, then $v_{n-1}=v_n$, and we only count one. 
For $a>1$, this gives
\begin{align*}
    &1 + n+ n(n-1) + \cdots + \left(n(n-1)\cdots 2\right) + \left(n(n-1)\cdots 2\cdot1\right)\\ &= n!\left(\frac{1}{n!} + \frac{1}{(n-1)!} + \frac{1}{(n-2)!} + \cdots + \frac{1}{1!} + \frac{1}{0!}\right),
\end{align*} 
and for $a=1$,
\begin{align*}
    &1 + n+ n(n-1) + \cdots + \left(n(n-1)\cdots 2\right)\\ &= n!\left(\frac{1}{n!} + \frac{1}{(n-1)!} + \frac{1}{(n-2)!} + \cdots + \frac{1}{1!}\right).
\end{align*}
\end{proof}

We continue by presenting an inequality description of $\Xn$.
\begin{proposition}\label{prop:inequality}
The $\bx$-parking function polytope $\Xn$ is given by the following minimal inequality description:
\begin{align*}
\intertext{\emph{For all} $1 \leq i \leq n$,}
    1 &\leq x_i \leq (n-1)b+a,  \\
\intertext{\emph{for all} $1 \leq i < j \leq n$,}    
    x_i+x_j &\leq \left((n-2)b+a\right) + \left( (n-1)b+a\right),\\
\intertext{\emph{for all} $1 \leq i < j < k \leq n$,}    
    x_i+x_j+x_k &\leq \left((n-3)b+a\right) + \left((n-2)b+a\right) + \left( (n-1)b+a\right),\\
    &\vdots\\
\intertext{\emph{for all} $1 \leq i_1 < i_2 < \cdots < i_{n-2} \leq n$,}     
    x_{i_1}+ x_{i_2}+ \cdots + x_{i_{n-2}} &\leq (2b+a) + \cdots + \left( (n-2)b+a\right) + \left((n-1)b+a\right),\\
\intertext{\emph{if $a > 1$, for all} $1 \leq i_1 < i_2 < \cdots < i_{n-1} \leq n$,}
    x_{i_1}+ x_{i_2}+ \cdots + x_{i_{n-1}} &\leq (b+a) + \cdots + \left( (n-2)b+a\right) + \left((n-1)b+a\right),\\
\intertext{\emph{and (regardless of $a$),}}    
    x_{1}+ x_{2}+ \cdots + x_{n} &\leq a + \cdots + \left( (n-2)b+a\right) + \left((n-1)b+a\right).
\end{align*}
\end{proposition}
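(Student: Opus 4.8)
The plan is to identify the polyhedron $Q \subseteq \R^n$ cut out by the displayed system with $\Xn$ via two inclusions, and then to certify that no inequality is redundant. For $1 \le m \le n$ write $c_m := \sum_{\ell=n-m}^{n-1}(a+\ell b)$ for the right-hand side of the $m$-variable sum; thus $c_1 = (n-1)b+a$ and $c_{m+1}-c_m = (n-m-1)b+a$, so the increments $c_1,\ c_2-c_1,\ \dots,\ c_n-c_{n-1}$ are exactly the ``staircase'' values $(n-1)b+a,\ (n-2)b+a,\ \dots,\ b+a,\ a$ in decreasing order. For the inclusion $\Xn \subseteq Q$: since $Q$, $\Xn$, and every inequality in the system are invariant under permuting coordinates, and $\Xn$ is the convex hull of the vertices in Proposition~\ref{prop:abn_vertex}, it suffices to check that an arbitrary $\bx$-parking function $(a_1,\dots,a_n)$ satisfies the system. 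With $b_1 \le \cdots \le b_n$ its nondecreasing rearrangement we have $b_i \le a+(i-1)b$, hence $a_i \ge 1$ and, for every $m$-subset $S$, $\sum_{i\in S}a_i \le b_{n-m+1}+\cdots+b_n \le c_m$; in particular the $(n-1)$-variable inequality holds regardless of $a$, which is harmless here.

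For the reverse inclusion $Q \subseteq \Xn$, note first that $Q$ is bounded, since the system forces $1 \le x_i \le c_1$; hence $Q$ is a polytope and it suffices to show every vertex of $Q$ lies in $\Xn$, and since all $\bx$-parking functions belong to $\Xn$ by definition, it is enough to show every vertex of $Q$ is an $\bx$-parking function. Using the $S_n$-symmetry again, take a vertex $\mathbf v$ of $Q$ with $v_1 \le \cdots \le v_n$. In this chamber an $m$-variable constraint is tightest on the suffix $\{n-m+1,\dots,n\}$, and $x_i \le c_1$ is the $m=1$ suffix sum, so $\mathbf v$ is pinned down by $n$ linearly independent active constraints coming from equalities $x_i = 1$ and nested suffix sums $x_{n-m+1}+\cdots+x_n = c_m$. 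Peeling the active suffix sums from the top and using that consecutive $c_m$ differ by a single staircase value, which lies strictly between $1$ and the next staircase value, forces $\mathbf v$ to consist of an initial block of $1$'s followed by a run of consecutive staircase values $a+kb, a+(k+1)b, \dots, a+(n-1)b$; that is, $\mathbf v$ is one of the sorted representatives of Proposition~\ref{prop:abn_vertex}, so $\mathbf v \in \Xn$ and $Q = \Xn$.

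For minimality, for each listed inequality I would produce $n$ affinely independent vertices of $\Xn$ on its bounding hyperplane (the resulting facets are in fact copies of lower-dimensional $\bx$-parking function polytopes). For $x_i = 1$ and $x_i = c_1$: use permutations of the vectors $(\underbrace{1,\dots,1}_k, a+kb,\dots,a+(n-1)b)$ that place, respectively, a $1$ or the maximal entry $a+(n-1)b$ in coordinate $i$, letting $k$ vary. For the $m$-variable inequality with $2 \le m \le n-2$: fix the support $S = \{n-m+1,\dots,n\}$ and use permutations keeping the $m$ largest entries on $S$ while the remaining entries range over $1$ and the smaller staircase values; this sweeps out an $(n-1)$-dimensional face. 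For the $(n-1)$-variable inequality, present only when $a>1$: with $S = \{2,\dots,n\}$, combine the permutations of $(1,a+b,\dots,a+(n-1)b)$ fixing the $1$ in coordinate $1$ with the permutations of $(a,a+b,\dots,a+(n-1)b)$ fixing the $a$ in coordinate $1$; because $1 \ne a$ these together span dimension $n-1$. When $a = 1$ this inequality is genuinely redundant: $c_n - c_{n-1} = a = 1$, so $\sum_{i\ne k}x_i = (x_1+\cdots+x_n)-x_k \le c_n - 1 = c_{n-1}$ already follows from the full-sum inequality and $x_k \ge 1$, which explains its omission.

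The main obstacle is the reverse inclusion: turning the informal ``peel the suffix sums'' step into a clean proof that the tight nested suffix-sum constraints together with the $x_i = 1$ constraints at a vertex of $Q$ force precisely the staircase values $a+\ell b$ and leave no room for any additional (in particular, fractional) vertex --- equivalently, that $Q$ is an integral polytope with exactly the vertex set of Proposition~\ref{prop:abn_vertex}. A robust alternative is an induction on $n$, exploiting that the slice $x_n = c_1$ of $Q$ in the sorted chamber is a copy of the corresponding polytope for $\mathcal{X}_{n-1}(a,b)$ via the identity $c_{m+1}-c_1 = \sum_{\ell=(n-1)-m}^{(n-1)-1}(a+\ell b)$; but setting up the induction so that an arbitrary sorted point of $Q$ is recognized as a convex combination of the lower-dimensional vertices still needs the same kind of case analysis, and in either approach the bookkeeping that separates the $a>1$ and $a=1$ behavior of the $(n-1)$-variable inequality is the part requiring the most care.
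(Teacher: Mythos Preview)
Your outline is correct and, in one respect, more scrupulous than the paper's own argument. The paper proceeds by reading off the inequalities from the vertex description of Proposition~\ref{prop:abn_vertex}: it checks that every vertex satisfies each listed bound (which is your inclusion $\Xn\subseteq Q$), explains why the $(n-1)$-variable bound is redundant when $a=1$, and then argues minimality by showing that the $k$-variable bound is strictly tighter than what one obtains by adding the single-variable bound to a $(k-1)$-variable bound. Notably, the paper never isolates or proves the reverse inclusion $Q\subseteq\Xn$ as a separate step; it is taken as evident once the bounds have been ``read off'' the extreme coordinates. Your proposal instead treats $Q\subseteq\Xn$ head-on by analyzing the active constraints at a sorted vertex of $Q$, and your diagnosis that this is where the real work lies is accurate. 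The ``peeling'' argument you sketch does go through: once you know the tight upper-bound constraints at a sorted vertex are suffix sums, two consecutive tight suffix sums force a staircase coordinate $a+\ell b$, a gap in the tight suffix sums would force some coordinate to exceed the next staircase value and hence violate a shorter suffix bound, and the remaining coordinates are pinned to $1$; so the tedium is bookkeeping rather than a missing idea.

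Your minimality argument is also genuinely different. The paper shows irredundancy by comparing each $k$-variable inequality only against those with fewer variables; you instead propose to exhibit $n$ affinely independent vertices on each bounding hyperplane, certifying directly that it supports a facet. Your route is more self-contained (it simultaneously pins down the facet count and shows no inequality is missing), and your treatment of the $(n-1)$-variable case---pairing the orbit fixing a $1$ in one coordinate with the orbit fixing an $a$ there, which are affinely independent precisely when $a>1$---is exactly the right way to see why that family appears only for $a>1$. Either approach is fine; yours buys a cleaner structural picture at the cost of the extra work you already flagged.
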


\begin{proof}
We use the vertex description given in Proposition \ref{prop:abn_vertex}, which states that the vertices of $\Xn$ are all permutations of 
\[ (\underbrace{1, \ldots, 1}_k, \underbrace{a+kb, a+(k+1)b, \ldots, a+(n-2)b, a+(n-1)b}_{n-k})\] for all $0 \leq k \leq n$.

First consider that since $a,b \geq 1$, all coordinates in a vertex of $\Xn$ are at least $1$, i.e., $1 \leq x_i$ for all $1 \leq i \leq n$. 
Now, we turn our attention to inequalities that are solely upper bounds, so we can assume that $k=0$ and only consider the largest possible coordinates. 
The largest a single coordinate could be is $a+(n-1)b$, so $x_i \leq a+(n-1)b$ for all $1\leq i \leq n$.

Next, summing the largest two coordinates is at most $\left((n-2)b+a\right) + \left( (n-1)b+a\right)$, so \[x_i + x_j \leq \left((n-2)b+a\right) + \left( (n-1)b+a\right), \text{ for all } 1 \leq i < j \leq n.\]

Repeating the same process with summing the largest possible $3, \ldots, n$ variables, completes the inequality description.

Note that in the case that $a=1$, the inequality that bounds the sum of any $n-1$ variables is not needed in the minimal inequality description.
We justify this as follows.
Given that $a=1$, the inequality is that \[x_{i_1}+ x_{i_2}+ \cdots + x_{i_{n-1}} \leq \frac{n(n-1)}{2} b +n-1 \text{ for all } 1 \leq i_1 < i_2 < \cdots < i_{n-1} \leq n.\] 
Also, we always need the final inequality $x_{1}+ x_{2}+ \cdots + x_{n} \leq \frac{n(n-1)}{2} b +n$. 
If we sum the $n-1$ largest terms of \[ (a, a+b, a+2b, \ldots, a+(n-2)b, a+(n-1)b),\] the only value that is not used is $a$, which in this case equals $1$. 
Hence, the final inequality \[x_{1}+ x_{2}+ \cdots + x_{n} \leq \frac{n(n-1)}{2} b +n\] is sufficient, because removing one coordinate from the sum on its left hand side subtracts $a=1$ from the right hand side, exactly yielding the inequality \[x_{i_1}+ x_{i_2}+ \cdots + x_{i_{n-1}} \leq \frac{n(n-1)}{2} b +n-1.\] 
Note that this only happens in the case that $a=1$ because if $a>1$, we are able to get a tighter inequality when we subtract $a >1$ from the right hand side as before.

We now show that the inequality description is minimal.
Observe that by construction for each of these defining inequalities, there exists an extremal point of the polytope which gives equality.
It is clear that the inequalities $1\leq x_i$ in each coordinate are necessary, as they are the only lower bounds on the coordinates. 
We only need to consider the minimality of the set of inequalities which are upper bounds on partial sums of the coordinates.
A general inequality on $1 \leq k \leq n$ variables says
\begin{align*}
    x_{i_1} + x_{i_2} + \cdots + x_{i_k} &\leq b\left(nk - \frac{k^2+k}{2}\right) + ka
\end{align*}
for some variables $1 \leq i_1 < i_2 < \cdots < i_k \leq n$.
Observe that for each of the inequalities on $k$ variables (and of course excluding the case of $a=1$ and $k=n-1$), it is strictly stronger on at least one point than all inequalities on $1, \ldots, k-1$ variables. 
Explicitly, a general inequality on $1 \leq k-1 \leq n$ variables says
\begin{align}
    x_{i_1} + x_{i_2} + \cdots + x_{i_{k-1}} &\leq b\left(n(k-1) - \frac{(k-1)^2+k-1}{2}\right) + (k-1)a\label{eq:proof_k-1}\\
    &= b\left(nk - \frac{k^2+k}{2} +(k-n)\right) + ka -a\nonumber\\
    &= b\left(nk - \frac{k^2+k}{2}\right) + ka +[b(k-n) -a].\nonumber
\end{align}
Note that adding one coordinate $x_k$ to the left hand side of (\ref{eq:proof_k-1}) and $a+(n-1)b$ to the right hand side gives
\begin{align*} x_{i_1} + x_{i_2} + \cdots + x_{i_k} &\leq b\left(nk - \frac{k^2+k}{2}\right) + ka +[b(k-n) -a] + a+(n-1)b\\
&= b\left(nk - \frac{k^2+k}{2}\right) + ka +b(k-1),
\end{align*}
which is a worse bound for all $k > 1$ due to the $b(k-1)$ term.
As we constructed this inequality description in increasing order on the number of variables, each one refining the previous, the inequality description is an irredundant system of inequalities and is therefore minimal.
\end{proof}

Since we have a minimal inequality description of $\Xn$, by enumerating the number of defining inequalities we obtain a count for the number of facets of $\Xn$.

\begin{corollary}\label{thm:x-facets}
The number of facets of $\Xn$ is $2^n-1$ if $a=1$ and $2^n-1+n$ if $a > 1$.
\end{corollary}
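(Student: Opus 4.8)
The plan is to count the defining inequalities in the minimal description of $\Xn$ given by Proposition~\ref{prop:inequality}, since the facets of a polytope are in bijection with the inequalities of any irredundant (minimal) inequality description. Proposition~\ref{prop:inequality} asserts minimality, so no further work is needed to rule out redundancy; I only need to tally.

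First I would count the lower-bound inequalities: there is exactly one inequality $1 \leq x_i$ for each coordinate $i$, contributing $n$ facets. Next I would count the upper-bound partial-sum inequalities. For each $k$ with $1 \leq k \leq n$, and each choice of indices $1 \leq i_1 < \cdots < i_k \leq n$, there is one inequality bounding $x_{i_1} + \cdots + x_{i_k}$, giving $\binom{n}{k}$ inequalities at level $k$. However, as noted in Proposition~\ref{prop:inequality}, when $a = 1$ the level $k = n-1$ inequalities are redundant and are dropped from the minimal description, whereas for $a > 1$ they are all present. So in the case $a > 1$, the upper-bound inequalities number $\sum_{k=1}^{n} \binom{n}{k} = 2^n - 1$, and adding the $n$ lower bounds gives $2^n - 1 + n$. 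In the case $a = 1$, we remove the $\binom{n}{n-1} = n$ inequalities at level $n-1$, so the upper bounds number $2^n - 1 - n$, and adding the $n$ lower bounds gives $2^n - 1$.

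This is essentially a bookkeeping argument, so there is no serious obstacle: the one point requiring care is correctly handling the $k = n-1$ case, i.e.\ remembering that it is the $a = 1$ situation where exactly $n$ facets disappear (and that the $k = n$ inequality $x_1 + \cdots + x_n \leq a + \cdots + (a+(n-1)b)$ is always present regardless of $a$). One should also double-check against the small example $n = 3$: for $a = 1$ we predict $2^3 - 1 = 7$ facets, matching the $1+3+3$ facets visible in Figure~\ref{fig:x-pfs} for $\mathcal{X}_3(1,1)$ (four triangles from the three $x_i \leq 3$ bounds plus... indeed seven facets: the three $x_i=3$ triangles, the three $x_i + x_j \le 5$ triangles, and the central hexagon $x_1+x_2+x_3=6$), and for $a > 1$ we predict $2^3 - 1 + 3 = 10$ facets, matching the count of facets drawn for $\mathcal{X}_3(2,2)$ in the same figure.
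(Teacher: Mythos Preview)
Your proof is correct and follows essentially the same approach as the paper: count the minimal defining inequalities from Proposition~\ref{prop:inequality}, splitting into the $n$ lower bounds $1 \le x_i$ and the upper-bound partial sums, with the $k=n-1$ level present only when $a>1$. One small slip in your sanity check: for $\mathcal{X}_3(1,1)$ the three non-triangle, non-hexagon facets are the $x_i = 1$ faces, not ``$x_i + x_j \le 5$ triangles'' --- those $k = n-1 = 2$ inequalities are precisely the ones that vanish when $a=1$.
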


\begin{proof} 
It suffices to count the number of minimal defining inequalities given in Proposition \ref{prop:inequality}. Note that there are $\binom{n}{k}$ different inequalities with $1 < k < n-1$ or $k=n$ distinct variables. We have $2\binom{n}{1}$ many inequalities with one variable since we have $1\leq x_i$ and $x_i \leq (n-1)b + a$.
If $a=1$, there are no inequalities in $n-1$ variables. Then the number of inequalities is
\[ \binom{n}{1} + \binom{n}{1} + \binom{n}{2} + \cdots + \binom{n}{n-2} + \binom{n}{n} = 2^n-1.\]
If $a > 1$, then there are $\binom{n}{n-1}$ inequalities in $n-1$ variables, so the number of inequalities is
\[ \binom{n}{1} +  \left[\binom{n}{1} + \binom{n}{2} + \cdots + \binom{n}{n-2} + \binom{n}{n-1}+ \binom{n}{n} \right] = n + 2^n-1.\]
\end{proof}

Next we study the edges of $\Xn$.

\begin{definition}
Let $x$ be a vertex of $\Xn$.
Then it is a permutation of 
\[(1, \ldots, 1, a+kb, a+(k+1)b, \ldots, a+(n-2)b, a+(n-1)b),\]
for some unique $1 \leq k \leq n$ if $a=1$ or $0 \leq k \leq n$ if $a>1$. We say that $x$ is on \textit{layer} $n-k$. For $x = (1, \ldots, 1)$, we say that it is on layer 0.
\end{definition}

\begin{lemma}
If $v,u$ are two vertices of $\Xn$ such that $vu$ is an edge, then the layers of $v$ and $u$ differ by at most 1.
\end{lemma}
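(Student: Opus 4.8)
The plan is to argue by contradiction: suppose $v$ is on layer $\ell_v$, $u$ is on layer $\ell_u$, and $\ell_u \geq \ell_v + 2$; I will produce a vertex $w$ strictly between $v$ and $u$ on the segment $\overline{vu}$, or else exhibit a linear functional minimized on a face containing $v$ and $u$ that also contains a third vertex, contradicting that $vu$ is an edge. The cleaner route is the first one. Recall from Proposition \ref{prop:abn_vertex} that a vertex on layer $n-k$ is a permutation of the word with $k$ ones followed by the ``staircase tail'' $a+kb, a+(k+1)b, \ldots, a+(n-1)b$; so a layer-$\ell$ vertex has exactly $n-\ell$ coordinates equal to $1$ and its remaining $\ell$ coordinates form a specific $\ell$-subset of the full staircase $\{a, a+b, \ldots, a+(n-1)b\}$ (the top $\ell$ entries, except that the very bottom entry $a$ may replace $a+(n-\ell)b$ only on the maximal layer when $a>1$ — but this subtlety does not affect the layer count).

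The key step is to analyze the midpoint $m = \tfrac12(v+u)$ and show that when the layers differ by at least $2$ it is a proper convex combination of two vertices of $\Xn$, hence $vu$ cannot be an edge. First I would split into coordinates where $v$ and $u$ agree versus disagree. On a coordinate where both equal $1$, the midpoint is $1$; on a coordinate where both are large, the midpoint is the average of two staircase values. Because $\ell_u \geq \ell_v + 2$, the vertex $u$ has at least two ``extra'' large coordinates beyond what $v$ has; I want to use the parity/structure of the staircase and the slack in the inequality description from Proposition \ref{prop:inequality} to write $m = \tfrac12(w_1 + w_2)$ where $w_1, w_2$ are obtained from $m$ by rounding one pair of disagreeing coordinates up and the complementary pair down, checking that both $w_1$ and $w_2$ still satisfy all the defining inequalities (so they lie in $\Xn$, and in fact are vertices or at least points of $\Xn$ distinct from $v,u$). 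Equivalently and perhaps more robustly: I would show directly that any face of $\Xn$ containing two vertices whose layers differ by $\geq 2$ must contain a vertex of intermediate layer, by arguing that a supporting hyperplane $H = \{c\cdot x = \delta\}$ with $v, u \in H$ is tight on some partial-sum inequality or coordinate inequality, and then ``interpolating'' the ones-versus-staircase pattern between $v$ and $u$ one step at a time yields a sequence of vertices all lying in $H$ — the middle ones having intermediate layer.

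I expect the main obstacle to be the bookkeeping around which coordinates of $v$ and $u$ hold the value $1$ versus a staircase value, since $v$ and $u$ are arbitrary permutations of their respective patterns and their support sets of ``ones'' need not be nested. The clean way to handle this is to first reduce, via the $S_n$-symmetry of $\Xn$ (it is invariant under permuting coordinates), to a normal form where the ones of $v$ and the ones of $u$ are as aligned as possible, and then to observe that on coordinates where $v$ has a $1$ but $u$ has a staircase value (there are $\ell_u - \ell_v \geq 2$ net such, after cancellation) one can trade a unit of ``largeness'' between two of them to build the two witnessing points. The other technical point to verify is that the constructed $w_1, w_2$ genuinely satisfy the $a>1$ family of $(n-1)$-variable inequalities; this is where I would invoke that those inequalities came from summing the top $n-1$ staircase entries, so any rearrangement that only moves mass ``downward'' among large coordinates keeps the relevant partial sums weakly below the bound. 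Once both witnesses are in $\Xn$ and distinct from the segment endpoints, the midpoint $m$ lies in the relative interior of a segment not contained in $\overline{vu}$, contradicting that $vu$ is an edge; hence $|\ell_v - \ell_u| \leq 1$.
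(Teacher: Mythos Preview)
Your proposal outlines two strategies but carries neither to completion, and the missing step is precisely the one that does all the work. In the midpoint approach you never actually construct $w_1,w_2$; ``rounding one pair of disagreeing coordinates up and the complementary pair down'' is not a definition, and without an explicit construction there is nothing to check against the inequalities of Proposition~\ref{prop:inequality}. The bookkeeping problem you correctly identify --- that the one-positions of $v$ and $u$ need not be nested --- is exactly what would have to be resolved to make this route go through, and you have not resolved it.

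The paper's proof avoids all of this with a single observation you are missing: take a linear functional $c$ witnessing the edge (so $c\cdot v = c\cdot u > c\cdot w$ for every other vertex $w$), use the $S_n$-symmetry to assume $c_1\le c_2\le\cdots\le c_n$, and then apply the \emph{rearrangement inequality}. Because $v$ and $u$ each maximize $c\cdot x$ over their own $S_n$-orbit, they are forced to be the weakly increasing representatives of their layers, i.e.\ literally
\[
v=(1,\ldots,1,a+kb,\ldots,a+(n-1)b),\qquad u=(1,\ldots,1,a+(k+t)b,\ldots,a+(n-1)b),
\]
with $t\ge 2$. This single stroke eliminates the alignment problem entirely: the one-positions are automatically nested. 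From here a two-line case split on the sign of $c_{k+t-1}$ finishes it: if $c_{k+t-1}\ge 0$ then the intermediate-layer vertex $w=(1,\ldots,1,a+(k+t-1)b,\ldots,a+(n-1)b)$ satisfies $c\cdot w\ge c\cdot u$; if $c_{k+t-1}<0$ then $c_k<\cdots<c_{k+t-1}<0$ forces $c\cdot v<c\cdot u$. Either way one contradicts the edge hypothesis. Your second sketch (``interpolating'' through intermediate-layer vertices on a supporting hyperplane) is in the right direction but misidentifies the mechanism: the supporting hyperplane for the edge need not be tight on any facet inequality, and the interpolation only becomes tractable after the rearrangement step pins down the exact form of $v$ and $u$.
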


\begin{proof}
The proof follows almost identically to that of Proposition 2.1 in \cite{AW} in the case that $a=1$.
In the case that $a >1$, there is just one more layer to address.

Let $c \cdot x = c_1x_1 + \cdots + c_nx_n$ be the dot product of vectors $c,x \in \R^n$. 
If $vu$ is an edge, then there exists $c \in \R^n$ such that $c \cdot v = c \cdot u > c \cdot w$ for any vertex $w$ of $\Xn$ such that $w \neq v,u$.
Note that $\Xn$ is invariant under permutation of the coordinates, so without loss of generality, assume that $c_1 \leq c_2 \leq \cdots \leq c_n$. 

Suppose for the sake of contradiction that $v,u$ are $t \geq 2$ layers apart.
Without loss of generality, suppose that $u$ is below $v$. 
Hence, $v$ is (up to permutation)
$$(1, \ldots, 1, a+kb, a+(k+1)b, \ldots, a+(n-2)b, a+(n-1)b),$$ 
and $u$ is (up to permutation) 
$$(1, \ldots, 1, a+(k+t)b, a+(k+t+1)b, \ldots, a+(n-2)b, a+(n-1)b),$$
where $1 \leq k < k+2 \leq k+t \leq n$ if $a=1$ and $0 \leq k < k+2 \leq k+t \leq n$ if $a > 1$.
Specifically, since $v,u$ uniquely maximizes $c \cdot x$, then by the rearrangement inequality, $v$ and $u$ are exactly 
$$(1, \ldots, 1, a+kb, a+(k+1)b, \ldots, a+(n-2)b, a+(n-1)b),$$ and 
$$(1, \ldots, 1, a+(k+t)b, a+(k+t+1)b, \ldots, a+(n-2)b, a+(n-1)b),$$
respectively (not just up to permutation), and $c_{k-1} < c_k < \cdots < c_n$. 
Consider two cases. 
First, if $c_{k+t-1} \geq 0$, then for $w = (1, \ldots, 1, a+(k+t-1)b, a+(k+t)b, \ldots, a+(n-2)b, a+(n-1)b) \in \Xn$ which is not equal to $v,u$, we have that $c \cdot w \geq c \cdot u$, a contradiction. Otherwise, if $c_{k+t-1}<0$, we have $c_k < \cdots < c_{k+t-1}< 0$, so $c \cdot v - c \cdot u = c_k(a+kb - 1) + c_{k+1}(a+(k+1)b -1) + \cdots + c_{k+t-1}(a+(k+ t -1)b -1) <0$. This implies $c \cdot v < c \cdot u$, a contradiction.
\end{proof}

\begin{remark}
For fixed $n$, if $a=1$, then regardless of the value for $b$, there are the same number of layers.
If $a>1$, then regardless of the value for $b$, there are the same number of layers, which is one more than in the case where $a=1$.
\end{remark}

\begin{proposition}\label{2.2}
For each vertex $v$ of $\Xn$, there are exactly $n$ edges of $\Xn$ with $v$ as one of the vertices. 
That is, $\Xn$ is a simple polytope.
\end{proposition}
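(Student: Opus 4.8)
The plan is to prove the (equivalent, stronger-sounding) assertion that $\Xn$ is a simple polytope; the edge count then follows from the standard fact that every vertex of a simple $d$-polytope is incident to exactly $d$ edges \cite{Ziegler}. First I would record that $\Xn$ is full-dimensional: the point $(1,\dots,1)$ together with the $n$ vertices obtained by placing the entry $a+(n-1)b$ in each of the $n$ coordinate positions (these are vertices by Proposition \ref{prop:abn_vertex} with $k=n-1$) form an affinely independent set of size $n+1$, using $a+(n-1)b-1>0$ for $n\ge 2$. Hence $\Xn$ is simple if and only if every vertex lies on exactly $n$ facets, and by Proposition \ref{prop:inequality} the facets of $\Xn$ correspond bijectively to the inequalities listed there. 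So the task reduces to counting, at a fixed vertex, how many of those inequalities are tight. The cases $n\le 2$ are immediate (a segment, respectively a polygon, is simple), so I would assume $n\ge 3$.

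By the $S_n$-symmetry of $\Xn$ I would take the vertex to be the sorted representative $v=(\underbrace{1,\dots,1}_{k},\,a+kb,\,a+(k+1)b,\dots,a+(n-1)b)$, where $1\le k\le n$ if $a=1$ and $0\le k\le n$ if $a>1$ (Proposition \ref{prop:abn_vertex}). Then I would tally the tight inequalities by type. The lower bounds $x_i\ge 1$ contribute $k$. The coordinate upper bound $x_i\le a+(n-1)b$ contributes $1$ if $k\le n-1$ and $0$ if $k=n$. For $2\le j\le n-2$, the inequality $\sum_{i\in S}x_i\le\sum_{t=n-j}^{n-1}(a+tb)$ is tight at $v$ for a unique $j$-set $S$ when $j\le n-k$ (namely the positions of the $j$ largest entries of $v$, which are distinct and exceed all other entries), and for no $S$ when $j>n-k$ (replacing a large entry, which is at least $a+b\ge 2$, by a $1$ forces the partial sum strictly below the bound); this type contributes $\#\{\,j:2\le j\le\min(n-2,n-k)\,\}$. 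If $a>1$, the $(n-1)$-variable inequality is tight, and then at a unique $(n-1)$-set, exactly when $k\in\{0,1\}$. Finally $\sum_i x_i\le\sum_{t=0}^{n-1}(a+tb)$ is tight precisely when $v$ lies on the top layer, since $\sum_{t=0}^{n-1}(a+tb)-\sum_i v_i=k(a-1)+b\binom{k}{2}$ vanishes iff $k=0$ or ($a=1$ and $k\le1$). Summing these five contributions and running through the possible values of $k$ (keeping the $a=1$ and $a>1$ cases separate) should give exactly $n$ in every case.

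The individual tallies are routine once the correct tight subset is pinned down; the delicate point — and the one I would present most carefully, ideally in a table indexed by $k=0,1,\dots,n$ with separate $a=1$ and $a>1$ columns — is that the contributions shift at the extreme values of $k$ in a way that exactly cancels: the top layer gains the full-sum facet but has the fewest $x_i\ge1$ facets; the layers $k\in\{0,1\}$ gain an extra $(n-1)$-variable facet when $a>1$ that is absent when $a=1$; and for $n=3$ the range $2\le j\le n-2$ is empty, so that case should be checked directly against Corollary \ref{thm:x-facets}. This is precisely where the argument extends that of Amanbayeva and Wang for $\PF_n=\X_n(1,1)$ \cite{AW}: one must additionally account for the new top layer that appears when $a>1$ and for the family of $(n-1)$-variable facets it introduces.
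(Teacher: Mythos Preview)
Your proposal is correct and takes a genuinely different route from the paper. The paper simply sketches that the argument of Amanbayeva--Wang \cite{AW} goes through verbatim when $a=1$ (with the coordinate values rescaled by $b$), and that for $a>1$ one must additionally verify the edge count at the three top layers by hand; that argument works at the level of edges, explicitly exhibiting the $n$ neighbours of each vertex. You instead work at the level of facets: having already recorded the minimal inequality description in Proposition~\ref{prop:inequality}, you count, for each sorted vertex $v$, how many of those inequalities are tight, and check that the total is $n$. This is cleaner in that it makes no further appeal to \cite{AW} and turns the proof into a bookkeeping exercise over the five inequality types; the paper's approach, by contrast, is more constructive in that it actually names the $n$ adjacent vertices. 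Your casework is accurate (I checked the tallies for all $k$ in both the $a=1$ and $a>1$ regimes, including $n=3$), and your remark that the new $(n-1)$-variable facets appearing when $a>1$ exactly compensate for the loss of lower-bound facets at $k\in\{0,1\}$ is precisely the mechanism that makes the count come out to $n$ on the new top layer. One cosmetic point: your parenthetical justification that no $j$-subset with $j>n-k$ is tight (``replacing a large entry by a $1$'') would read better phrased as a comparison of the maximal $j$-subset sum $(j-(n-k))+\sum_{t=k}^{n-1}(a+tb)$ against the bound $\sum_{t=n-j}^{n-1}(a+tb)$, whose difference $\sum_{t=n-j}^{k-1}(a-1+tb)$ is a sum of strictly positive terms.
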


\begin{proof}[Proof sketch]
In the case that $a=1$, the proof is identical to the proof of Proposition 2.2 in \cite{AW}, where all instances of coordinates $(1, \ldots, 1, k+1, k+2, \ldots, n-1, n)$ are replaced by $(1, \ldots, 1, 1+kb, 1+(k+1)b, \ldots, 1+(n-2)b, 1+(n-1)b)$. The same proof works because $a=1$, so no new layers are introduced.

In the case that $a >1$, there is one more layer to address, as $(1, a+b, a+2b, \ldots, a+(n-2)b, a+(n-1)b) \neq (a, a+b, a+2b, \ldots, a+(n-2)b, a+(n-1)b)$ are distinct now. By modifying the proof from the case of $a=1$, one only needs to check that layers $n-2$, $n-1$ (the new layer, meaning the layer that causes new facets to appear), and $n$ have vertices with the correct number of edges.
\end{proof}

A polytope is \textit{smooth} if it is simple and at each vertex $v$, the primitive edge directions from $v$ form a lattice basis of $\mathbb{Z}^n$. More background on smooth polytopes, as well as their connections to toric ideals and projective geometry, can be found in \cite{CLS}.
\begin{proposition}
     For any choice of positive integers $a,b,n$, the polytope $\mathfrak{X}_{n}(a,b)$ is smooth.
\end{proposition}

\begin{proof}
By symmetry, it suffices to check this at some vertex $v$ on each layer $n-k$ for $0\leq k\leq n$ (or $1 \leq  k\leq n$ in the case where $a=1$). 
Without loss of generality, let $v = (1,\dots 1, a+kb,\dots, a+(n-1)b)$.
From the proof of Proposition \ref{2.2}, for any given vertex $v$ of $\mathfrak{X}_{n}(a,b)$, we have determined the $n$ vertices $u_1,\dots, u_n$ such that $vu_i$ is an edge. 

If $k=n$, we have $v = (1,\dots, 1)$ and the $u_i$ are all the permutations of $(1,\dots, 1, n)$. 
Thus, all the primitive edge directions, determined by $v$ and the first lattice point along each edge containing it, clearly form an integral basis of $\mathbb{Z}^n$.

Now, assume $a>1$ and consider $2\leq k \leq n-1$. 
Let $u_1, \dots, u_k$ be on the layer above $v$, where $u_i$ is obtained by changing the $i$-th coordinate of $v$ from 1 to $a+(k-1)b$. 
Let $u_{k+1}, \dots, u_{n-1}$ be on the same layer as $v$, where $u_j$ is obtained by switching the $j$-th and $(j+1)$-st coordinates of $v$. 
The last vertex $u_n$ is on the layer below $v$ and is obtained by changing the $k$-th coordinate of $v$ from $a+kb$ to 1.
We can see that the primitive edge directions from $v$ to $u_1,\dots, u_n$ form a basis of $\mathbb{Z}^n$, where the first $k$ vectors are basis for the first $k$-coordinates of $\mathbb{Z}^n$ and the last $(n-k)$ are a basis of the last $n-k$ coordinates.

If $k =1$, we have $v = (1, a+b, \dots, a+(n-1)b)$. 
Let $u_1 = (a, a+b,\dots, a+(n-1)b)$, which is on the layer above $v$.
Let $u_2,\dots, u_{n-1}$ be on the same layer as $v$ where $u_j$ is obtained by switching the $j$-th and $(j+1)$-th coordinates of $v$. 
Finally, let $u_n = (1,1, a+2b,\dots, a+(n-1)b)$, which is on the layer below $v$. 
It is clear that the primitive edge directions from $v$ to these $u_i$ form an integral basis of $\mathbb{Z}^n$.

If $k = 0$, we have $v  = (a, a+b,\dots, a+(n-1)b)$.
Let $u_1,\dots, u_{n-1}$ be on the same layer as $v$ where $u_j$  is obtained by switching the $j$-th and $(j+1)$-st coordinates of $v$.
Finally, let $u_n = (1,a+b, a+2b,\dots, a+(n-1)b)$, which is on the layer below $v$.
It is clear that the primitive edge directions from $v$ to these $u_i$ form an integral basis of $\mathbb{Z}^n$.

A similar proof holds for $a=1$: note that when $a=1$ we have one less layer.
\end{proof}

\begin{proposition}
The number of edges of $\Xn$ is $\frac{n}{2}\Ver(\Xn)$, where $\Ver(\Xn)$ denotes the number of vertices $\Xn$.
\end{proposition}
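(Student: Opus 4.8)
The plan is to derive this as an immediate consequence of Proposition~\ref{2.2} via a double-counting (handshake) argument, so essentially no new geometric input is needed. First I would recall that $\Xn$ is an $n$-dimensional polytope (it is the convex hull of integer points spanning $\R^n$, as already noted in the definition), and that by Proposition~\ref{2.2} it is a simple polytope in which every vertex is incident to exactly $n$ edges.

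Next I would count the set $\mathcal{I}$ of incident pairs $(v,e)$ where $v$ is a vertex of $\Xn$ and $e$ is an edge of $\Xn$ containing $v$, in two ways. Summing over vertices: since each of the $\Ver(\Xn)$ vertices lies on exactly $n$ edges, $|\mathcal{I}| = n\cdot\Ver(\Xn)$. Summing over edges: each edge has exactly two endpoints, so $|\mathcal{I}| = 2\cdot(\text{number of edges of }\Xn)$. Equating the two expressions gives $2\cdot(\text{number of edges}) = n\cdot \Ver(\Xn)$, hence the number of edges of $\Xn$ equals $\tfrac{n}{2}\Ver(\Xn)$.

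There is no real obstacle here: the only thing being used is that $\Xn$ is simple of dimension $n$, which was established in Proposition~\ref{2.2}, together with the trivial fact that every edge of a polytope has two vertices. If one wanted to be fully self-contained, the single point worth spelling out is why a simple $n$-polytope has exactly $n$ edges at each vertex — this follows because the vertex figure at a simple vertex is an $(n-1)$-simplex, whose $n$ vertices correspond to the $n$ edges through that vertex — but since Proposition~\ref{2.2} already asserts the count of $n$ edges per vertex directly, the proof reduces to the one-line double count above.
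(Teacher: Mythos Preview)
Your proposal is correct and follows essentially the same approach as the paper: the paper's proof simply notes that by Proposition~\ref{2.2} the graph of $\Xn$ is $n$-regular and applies the handshake count, which is exactly your double-counting argument.
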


\begin{proof}
By Proposition \ref{2.2}, the graph of $\Xn$ is an $n$-regular graph. This gives the desired formula, where the number of vertices is given by Proposition \ref{thm:num_vertices}.
\end{proof}

Up to this point we have the number of $0$-dimensional faces (vertices), the $1$-dimensional faces (edges), and $(n-1)$-dimensional facets (facets).
In what comes next, we study the  faces of higher dimension.

\begin{proposition}\label{thm:faces}
Let $f_{k}$ be the number of $k$-dimensional faces of $\Xn$ for $k \in \{ 0, \ldots, n\}$. Then if $a=1$,
\[f_{k} = \sum_{m=0,\ m \neq 1}^{n-k} \binom{n}{m} \cdot (n-k-m)! \cdot S(n-m+1, n-k-m+1),\] and if $a > 1$,
\[f_{k} = \sum_{m=0}^{n-k} \binom{n}{m} \cdot (n-k-m)! \cdot S(n-m+1, n-k-m+1),\] where $S(n,k)$ are the Stirling numbers of the second kind.   
\end{proposition}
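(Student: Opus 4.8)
The plan is to classify the faces of $\Xn$ by the linear functionals that attain their maximum on them, realize each face as an explicit product of permutahedra and a lower-dimensional $\bx$-parking function polytope, and then count these products by a Stirling-number identity. Any nonempty face $F$ is the set of maximizers of some $c=(c_1,\dots,c_n)\in\R^n$, and since $\Xn$ is symmetric under permuting coordinates it is enough to track the combinatorial data of $c$: the ordered set partition $B_1,\dots,B_s$ of $\{i:c_i>0\}$ into level sets of $c$ listed in decreasing order of value, the zero set $Z=\{i:c_i=0\}$, and the negative set $N=\{i:c_i<0\}$. Using the vertex description in Proposition~\ref{prop:abn_vertex} and a rearrangement-inequality argument of the kind already used for the edge lemma, I would show that the maximizers of $c$ are precisely the vertices of $\Xn$ in which the coordinates indexed by $N$ all equal $1$, the coordinates indexed by $B_1\cup\dots\cup B_s$ receive the top $|B_1|+\dots+|B_s|$ of the available coordinate values with $B_1$ taking the largest chunk, $B_2$ the next, and so on, and the coordinates indexed by $Z$ are completed in every way consistent with the result being an $\bx$-parking point. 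Consequently $F$ is affinely a product $\Pi_{|B_1|}\times\dots\times\Pi_{|B_s|}\times\X_{|Z|}(a+|N|b,b)\times\{\mathrm{pt}\}$: permutahedra from permuting values within each positive block, a (shifted, hence combinatorially unchanged) smaller $\bx$-parking function polytope from the free completion on $Z$, and a point from $N$. Since $\dim\Pi_q=q-1$ and $\dim\X_q(a',b)=q$ whenever $\X_q(a',b)$ is full-dimensional, $\dim F=n-s-|N|$; the only non-full-dimensional case is $q=1$, $a'=1$, which does not arise when $a>1$.

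For $a>1$ distinct data $(B_\bullet;Z;N)$ yield distinct faces (one recovers $N$ as the coordinates constantly equal to $1$, $Z$ as the remaining coordinates that equal $1$ on part of $F$, and the ordered partition $B_\bullet$ from the value ranges of the other coordinates), so $f_k$ equals the number of such data with $n-s-|N|=k$. Setting $m=|N|$ and hence $s=n-k-m$: choose $N$ in $\binom{n}{m}$ ways, then split the remaining $n-m$ indices as $Z\sqcup B_1\sqcup\dots\sqcup B_s$ with the $B_j$ nonempty and linearly ordered and $Z$ possibly empty. Adjoining a ghost index to $Z$ puts such splittings in bijection with the set partitions of an $(n-m+1)$-element set into $s+1$ blocks in which the $s$ blocks not containing the ghost carry a linear order, and there are $s!\,S(n-m+1,s+1)$ of these (equivalently, $\tfrac{1}{s+1}$ of the surjections from an $(n-m+1)$-set onto an $(s+1)$-set). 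This produces exactly $f_k=\sum_{m=0}^{n-k}\binom{n}{m}(n-k-m)!\,S(n-m+1,n-k-m+1)$.

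When $a=1$ the value $a$ equals the minimum coordinate value, so layer $n$ collapses onto layer $n-1$: a singleton positive block occupying the bottom of the positive part, or a singleton zero block, is forced to the value $1$ and becomes indistinguishable from an index of $N$. I would catalog exactly which data $(B_\bullet;Z;N)$ become equal under this collapse and then check, using the Stirling recurrence $S(n,j)=j\,S(n-1,j)+S(n-1,j-1)$ together with the identity $\sum_{z}\binom{n}{z}S(n-z,j)=S(n+1,j+1)$, that the resulting over-count inside the $m=0$ summand is cancelled exactly by the $m=1$ summand, leaving $f_k=\sum_{m=0,\,m\ne1}^{n-k}\binom{n}{m}(n-k-m)!\,S(n-m+1,n-k-m+1)$. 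The cases $f_n=1$, $f_{n-1}$ (Corollary~\ref{thm:x-facets}), and $f_0$ (Proposition~\ref{thm:num_vertices}) serve as consistency checks.

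The main obstacle is not the counting but Step~1 together with the bookkeeping for $a=1$: one must prove rigorously that the maximizer set of $c$ is exactly as described — in particular that no vertex outside the stated family is optimal and that the completions on $Z$ sweep out a full copy of a smaller $\bx$-parking function polytope — and then pin down precisely which data collapse when $a=1$. A clean way to handle the latter is to re-parametrize the $a=1$ faces so that every index whose coordinate is forced to equal $1$ is always placed in $N$; this removes the redundancy among the data and makes the restriction $m\ne1$ transparent.
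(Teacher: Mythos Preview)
Your approach is essentially what the paper does, though the paper gives only a sketch deferring to Amanbayeva--Wang. The parametrization of faces by the data $(B_\bullet; Z; N)$, the product decomposition $\Pi_{|B_1|}\times\cdots\times\Pi_{|B_s|}\times\X_{|Z|}(a+|N|b,b)$, and the ghost-index count leading to $s!\,S(n-m+1,s+1)$ are all correct, and for $a>1$ the bijection between data and faces goes through exactly as you describe, since from a face one recovers $N$ as the set of coordinates identically equal to $1$, then $Z$ and the ordered blocks $B_\bullet$ from the value ranges.

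For $a=1$ your diagnosis of the collapse is right, and the cancellation against the $m=1$ term does follow from the Stirling recurrence: the problematic $m=0$ data are precisely those with $z=1$ (which land one dimension too low) together with those with $z=0$ and $|B_s|=1$ (which duplicate an $m=1$ datum of the same dimension), and one checks that $n\bigl[(n-k)!\,S(n-1,n-k)+(n-k-1)!\,S(n-1,n-k-1)\bigr]=n(n-k-1)!\,S(n,n-k)$, which is exactly the $m=1$ summand. One caveat: your proposed ``clean re-parametrization'' (forcing every index whose coordinate is identically $1$ into $N$) does \emph{not} produce a bijection with the $m\ne1$ data. Faces with exactly one forced-$1$ coordinate do occur when $a=1$ (e.g.\ the edge $\{(3,2,1),(2,3,1)\}$ in $\PF_3$), so the canonical data can have $|N|=1$; meanwhile the $m=0$, $z=1$ data still sit in the wrong $f_k$ rather than merely duplicating. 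The argument is therefore a counting identity, not a relabeling --- which is fine, but you should present it as such rather than as a bijection.
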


\begin{proof}[Proof sketch]
In the case that $a=1$, the proof is identical to the proof of Theorem 3.1 in \cite{AW}, where all instances of coordinates $(1, \ldots, 1, k+1, k+2, \ldots, n-1, n)$ are replaced by $(1, \ldots, 1, 1+kb, 1+(k+1)b, \ldots, 1+(n-2)b, 1+(n-1)b)$. The same proof works because $a=1$, so no new layers are introduced, and thus no new faces are introduced.

In the case that $a >1$, there is one more layer to address, as $(1, a+b, a+2b, \ldots, a+(n-2)b, a+(n-1)b) \neq (a, a+b, a+2b, \ldots, a+(n-2)b, a+(n-1)b)$ are distinct now. By modifying the proof from the case of $a=1$, the restriction given in Lemma 3.1 in \cite{AW} which causes $m = 1$ to be excluded from the sum in the proof of Theorem 3.1 in \cite{AW} is removed due to the additional layer.
\end{proof}

The faces of a convex polytope $P$ form a lattice called its \emph{face lattice}, denoted by $\mathcal{F}(P)$, where the partial ordering is given by set inclusion of the faces.
Two polytopes whose face lattices are isomorphic (as unlabeled partially ordered sets) are \emph{combinatorially equivalent}.
We continue with a characterization of which $\Xn$ are combinatorially equivalent.

\begin{proposition}
 For fixed $n$ and for $a=1$, the $\mathfrak{X}_{n}(1,b)$ are combinatorially equivalent for all $b \geq 1$.
Additionally, for fixed $n$, the $\Xn$ are combinatorially equivalent for all $a > 1$ and $b \geq 1$.
\end{proposition}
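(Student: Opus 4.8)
The plan is to dispatch the case $a=1$ by a single affine dilation, and the case $a>1$ by showing that a suitably labeled vertex--facet incidence matrix of $\Xn$ does not depend on $a$ or $b$. \emph{For the case $a=1$:} for $b\ge 1$ let $\phi_b\colon\R^n\to\R^n$ be the dilation $\phi_b(\bx)=b\bx+(1-b)\mathbf{1}$ centered at $\mathbf{1}$; this is an affine bijection with inverse $\phi_{1/b}$. By Proposition~\ref{prop:abn_vertex} the vertices of $\mathcal{X}_n(1,1)$ are the permutations of $(1,\dots,1,k+1,k+2,\dots,n)$ for $1\le k\le n$, and $\phi_b$ fixes any coordinate equal to $1$ while sending a coordinate $1+j$ to $1+jb$; hence $\phi_b$ carries the vertex set of $\mathcal{X}_n(1,1)$ bijectively onto the vertex set of $\mathcal{X}_n(1,b)$ (Proposition~\ref{prop:abn_vertex} again), so $\phi_b(\mathcal{X}_n(1,1))=\mathcal{X}_n(1,b)$. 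An affine isomorphism induces an isomorphism of face lattices, so $\mathcal{X}_n(1,1)$ and $\mathcal{X}_n(1,b)$ are combinatorially equivalent, settling the first assertion.

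\emph{For the case $a>1$:} fix $n$ and introduce labeling sets that make no reference to $a$ or $b$. Let $\mathcal I_n$ be the set of pairs $(T,\rho)$ with $T\subseteq[n]$ and $\rho\colon[n]\setminus T\to\{|T|,|T|+1,\dots,n-1\}$ a bijection, and let $\mathcal F_n=[n]\sqcup\{S\subseteq[n]:1\le|S|\le n\}$. For integers $a\ge 2$, $b\ge 1$, Proposition~\ref{prop:abn_vertex} gives a bijection $\Psi_{a,b}$ from $\mathcal I_n$ onto the vertices of $\Xn$, sending $(T,\rho)$ to the point with $i$-th coordinate equal to $1$ for $i\in T$ and equal to $a+\rho(i)b$ for $i\notin T$; distinct pairs give distinct vertices since $a\ge 2$ and $b\ge 1$. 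Likewise Proposition~\ref{prop:inequality} identifies the facets of $\Xn$ with $\mathcal F_n$: an index $i\in[n]$ labels the facet $\{x_i=1\}$, and a set $S$ with $|S|=j$ labels the facet on which $\sum_{i\in S}x_i$ attains the value $C_j:=\sum_{m=n-j}^{n-1}(a+mb)$ appearing in that proposition (the facets with $|S|=n-1$ occur precisely because $a>1$).

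The key step is to verify that, under these labelings, the vertex--facet incidence matrix of $\Xn$ is the same for all integers $a\ge 2$, $b\ge 1$. The vertex $\Psi_{a,b}(T,\rho)$ lies on the facet labeled $i\in[n]$ if and only if its $i$-th coordinate is $1$, i.e.\ if and only if $i\in T$. For an upper facet labeled $S$ with $|S|=j$ and $|T|=k$: the coordinates of $\Psi_{a,b}(T,\rho)$ consist of $k$ copies of $1$ together with the distinct values $a+kb<a+(k+1)b<\dots<a+(n-1)b$, each exceeding $1$ since $a\ge 2$; hence the $j$ largest coordinates of the vertex are unambiguously determined, and a short computation shows that $\sum_{i\in S}x_i=C_j$ holds at this vertex if and only if $j\le n-k$ and $S=\rho^{-1}(\{n-j,\dots,n-1\})$. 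Both incidence conditions depend only on the combinatorial data $(T,\rho,S)$, not on $a$ or $b$. Since the face lattice of a polytope is determined up to isomorphism by its vertex--facet incidence matrix (see, e.g., \cite{Ziegler}), all $\Xn$ with $a>1$ and $b\ge 1$ (in particular all are combinatorially equivalent to $\mathcal{X}_n(2,1)$) are combinatorially equivalent.

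The only place requiring genuine work is the verification that the incidence relation is purely combinatorial; this rests on the absence of accidental equalities among partial sums of vertex coordinates, which is exactly what the hypotheses $a\ge 2$ and $b\ge 1$ guarantee, and is the reason the case $a=1$ behaves differently (indeed, by Corollary~\ref{thm:x-facets} it has fewer facets and hence is not combinatorially equivalent to the $a>1$ polytopes). One should also confirm, routinely, that the reconstruction of the face lattice from vertex--facet incidences applies here, which it does since we are working with honest polytopes whose complete vertex and facet descriptions are supplied by Propositions~\ref{prop:abn_vertex} and~\ref{prop:inequality}.
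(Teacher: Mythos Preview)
Your proof is correct and, for the $a>1$ case especially, cleaner than the paper's. For $a=1$ you use the dilation $\phi_b$ centered at $\mathbf{1}$; this is exactly the map the paper records later as Lemma~\ref{lem:1,b,n-dilate+a,1,n-dilate}(1), but the paper's own proof of the proposition does not invoke it, instead appealing to Proposition~\ref{thm:faces} for the $f$-vector and then defining an isomorphism of face lattices by ``replacing $b_1$ by $b_2$ in the vertex description of each face.'' Your affine map makes the combinatorial equivalence immediate.

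For $a>1$ the approaches genuinely diverge. The paper again defines a map on face lattices by the parameter substitution $(a_1,b_1)\mapsto(a_2,b_2)$ at the level of vertex descriptions, asserting this is an isomorphism; the verification that such a substitution actually carries faces to faces is left implicit. You instead fix parameter-free label sets $\mathcal I_n$ and $\mathcal F_n$ for the vertices and facets (via Propositions~\ref{prop:abn_vertex} and~\ref{prop:inequality}) and check directly that the vertex--facet incidence relation depends only on the combinatorial data $(T,\rho,S)$, not on $(a,b)$. Since the face lattice of a polytope is recoverable from its vertex--facet incidences, this yields the result. Your argument is more explicit about \emph{why} no accidental incidences arise---the hypothesis $a\ge 2$ ensures that the coordinate value $1$ is strictly smaller than every $a+jb$, which is precisely what separates this case from $a=1$---and it sidesteps the need to argue that a vertex-level bijection extends to a lattice isomorphism.
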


\begin{proof}
By Proposition \ref{thm:faces}, for each $b$, the face lattice of $\mathfrak{X}_{n}(1,b)$ has the same number of elements, and the same number of elements at each level corresponding to the dimension of the face.
For a face lattice, the join of two elements in the same layer will be in the layer above it, and the meet of two elements in the same layer will be in the layer below it, as face inclusions pass through faces of one dimension more/less (face inclusions of faces of the same dimension result in equality).
Thus, it only remains to construct the isomorphism.
Define $\varphi: \mathcal{F}(\mathfrak{X}_{n}(1,b_1)) \to \mathcal{F}(\mathfrak{X}_{n}(1,b_2))$ by replacing $b_1$ by $b_2$ in the vertex description of each face in the face lattice.
It is clear that this is an isomorphism, as the inverse is given by replacing $b_2$ by $b_1$.
Therefore, the $\mathfrak{X}_{n}(1,b)$ are combinatorially equivalent for $a=1$ and for any $b\geq 1$. 

Similarly as before, for all $a>1$, we define $\varphi: \mathcal{F}(\mathfrak{X}_{n}(a_1,b_1)) \to \mathcal{F}(\mathfrak{X}_{n}(a_2,b_2))$ by replacing $a_1$ by $a_2$ and $b_1$ by $b_2$ in the vertex description of each face in the face lattice.
Hence, the $\Xn$ are combinatorially equivalent for any $a>1$ and for all $b\geq 1$. 
\end{proof}

%%%%%%%%%%%%%%%%%%%%%%%%%%%%%%%%%%%%%%%%%%%%%%%%%%%%%%%%%%%%%%%%%%%%%

\subsection{Volume}\label{sec:volume}
We turn our attention to calculating the volume of  $\bx$-parking function polytopes $\Xn$.
We extend previous work of \cite{AW} to find a recursive volume formula in Theorem \ref{thm:generalized_recursive_volume}. 
Then, by using exponential generating functions, we find a closed-form expression for the volume of  $\Xn$ in Theorem \ref{thm:generalized_closed_form_volume}. 
We also consider the relationship between the  $\bx$-parking function polytopes and partial permutahedra, allowing us to expand known results on the volume of partial permutahedra.
The relationship between partial permutahedra and $\mathbf{x}$-parking function polytopes is given by the following proposition, which answers the question of when a partial permutahedra is an $\mathbf{x}$-parking function polytope and vice versa.

\begin{proposition}\label{prop:x-pfp_pp} 
For $n=1$, $\mathcal{P}(1,p)$ is integrally equivalent to $\X_{1}(a)$ if and only if $a = p + 1$.
For $n \geq 2$,
\begin{enumerate}[(i)]
    \item if $p \geq n-1$, then $\mathcal{P}(n,p)$ is integrally equivalent to $\X_{n}(a,b)$ if and only if $b=1$ and $a = p - n +2$;
    \item if $p < n-1$, then $\mathcal{P}(n,p)$ is not integrally equivalent to any $\X_{n}(a,b)$.
\end{enumerate}

\end{proposition}

%Given a full-dimensional partial permutahedron $\mathcal{P}(n,p)$ with $n \geq 2$, there exists an integrally equivalent $\mathbf{x}$-parking function polytope $\X_{n}(a,b)$ if and only if $p \geq n-1$. Then it is unique and given by $b=1$ and $a = p - n +2$.

%On the other hand, given an $\mathbf{x}$-parking function polytope $\X_{n}(a,b)$ with $n \geq 2$, there exists an integrally equivalent full-dimensional partial permutahedron $\mathcal{P}(n,p)$ if and only if  $b=1$. Then it is unique and given by $p = a+n-2$.

%We can summarize the correspondence explicitly as follows. If $n \geq 2$ and $p \geq n-1$, then $\mathcal{P}(n,p)$ is integrally equivalent to $\X_{n}(a,b)$ if and only if $b=1$ and $a = p - n +2$. 
%If $n = 1$ and $p \geq n-1$, then $\mathcal{P}(1,p)$ is integrally equivalent to $\X_{1}(a)$ if and only if $a = p + 1$.

\begin{proof}
First, recall from Remark \ref{rem:n=1} that when $n=1$ for an $\mathbf{x}$-parking function, $b$ is unused. 
The result where $n=1$ is clear, as these polytopes are just 1-dimensional line segments of length $a-1 = p$.
Thus, we reduce to the case where $n \geq 2$.

For two polytopes to be integrally equivalent, they must have the same dimension (relative to its affine span). All partial permutahedra (recall that $p$ must be a positive integer) are full dimensional (Remark 5.5 \cite{HS}). As all $\X_{n}(a,b)$ are also full-dimensional, we are justified in using $n$ to denote the dimension for both.

We will show that the conditions $b=1, a=p-n+2$ are necessary for $\mathcal{P}(n,p)$ and $\Xn$ to be integrally equivalent by comparing ``maximal'' vertices. 
Apply the coordinate transformation to increase the each coordinate by one in $\mathcal{P}(n, p)$ (as in the proof of Proposition \ref{prop:classical-partial}) to match the minimal vertices of the two polytopes. 
Consider the ``maximal'' vertices \[v = (a,a+b,\ldots, a+(n-2)b, a+(n-1)b)\] of $ \Xn$ and $u= (\max(p -n +2,1), \ldots, p, p+1)$ of the shifted $\mathcal{P}(n,p)$, where ``maximal'' means the maximal vertex where all the coordinates are weakly increasing. 
If the two polytopes are integrally equivalent, we must have $v = u$.
If $b\neq 1$, the difference between the last two coordinates of $v$ is $b\geq 2$ while the difference between the last two coordinates of $u$ is 1, thus $u\neq v$. 
Hence, $b=1$ is a necessary condition.

Furthermore, consider $a \neq p -n +2$. 
If $n \leq p+1$, the first coordinate of $u$ is $\max(p -n +2,1) = p -n +2$, which gives us $u\neq v$.
If $n > p+1$, we can see that $\max(p -n +2,1) = 1$.
Additionally, the second coordinate of $u$ is at most $1$, since $(p -n +2)+1 \leq 1$.
Thus we have the first two coordinates of $u$ must be 1. 
However, the first two coordinates of $u$ are $a, a+b$, where $b$ is nonzero, thus $u\neq v$.

Hence, the conditions $b=1$, and $a = p - n +2$ are necessary for $\mathcal{P}(n,p) = \Xn$.

We next show that they are sufficient. Consider $\Xn$ where $b=1$, and $a = p - n +2$. 
Note that this second condition implies $n\leq p+1$, as $a \geq 1$.
Using the vertex description of Proposition \ref{prop:abn_vertex}, we can consider $\Xn$ to be the convex hull of all permutations of
\[ (\underbrace{1, \ldots, 1}_k, \underbrace{p -n +k+2, p -n +k+3, \ldots, p, p +1}_{n-k}),\] for all $0 \leq k \leq n$. 
Since $n\leq p+1$ implies $\min(n,p) = n$, this is exactly the vertex description of shifted $\mathcal{P}(n, p)$.

Finally, we saw that the necessary condition for integral equivalence $a = p -n+2$ implies that $n \leq p+1$. Hence if $p < n-1$, then $\mathcal{P}(n,p)$ is not integrally equivalent to any $\X_{n}(a,b)$.
\end{proof}

We now recall the recursive volume formula for the classical parking function polytope.

\begin{theorem}[Theorem 4.1, \cite{AW}]\label{thm:mit_volume}
Define a sequence $\{V_n\}_{n \geq 0}$ by $V_0=1$ and $V_n = \Vol(\PF_n)$ for all positive integers $n$.
Then $V_1 = 0$ and for all $n \geq 2$,
\begin{equation*}
    V_n = \frac{1}{n} \sum_{k=0}^{n-1} \binom{n}{k} \frac{(n-k)^{n-k-1}(n+k-1)}{2} V_k.
\end{equation*}
\end{theorem}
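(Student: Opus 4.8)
We would prove this recursion by a pyramid decomposition of $\PF_n$ with apex at the vertex $\mathbf{1}:=(1,1,\ldots,1)$ (the unique point of $\PF_n$ minimizing $x_1+\cdots+x_n$, hence a vertex). In general, for a $d$-polytope $P$ and a vertex $v$, $\Vol(P)=\sum_{F}\tfrac1d\Vol_{d-1}(F)\,\operatorname{dist}(v,\operatorname{aff}F)$, the sum over the facets $F$ of $P$; the facets through $v$ contribute nothing. The base cases are immediate: $\PF_0$ is a point and $V_0=1$ by convention, while $\PF_1=\{(1)\}\subseteq\R^1$ is a single point, so $V_1=\Vol(\PF_1)=0$.

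For $n\ge 2$, recall that $\PF_n$ is cut out by $x_i\ge 1$ together with $x_{i_1}+\cdots+x_{i_m}\le n+(n-1)+\cdots+(n-m+1)$ over all $m$-subsets $\{i_1<\cdots<i_m\}\subseteq[n]$, and that this is a minimal facet description once the redundant case $m=n-1$ is discarded (cf.\ \cite{AW}; this is the $a=b=1$ case of the description in Section~\ref{sec:x-parking}). The facets $x_i\ge 1$ all contain $\mathbf 1$, so only the partial-sum facets $F_S$ (for $S$ an $m$-subset with $m\in\{1,\ldots,n-2\}\cup\{n\}$) contribute. The heart of the argument is the structural claim that on $F_S$ the parking condition \emph{decouples}: a point of $\PF_n$ lies in $F_S$ iff its $S$-coordinates are a permutation of $(n-m+1,\ldots,n)$ and its coordinates indexed by $[n]\setminus S$ form a point of $\PF_{n-m}$. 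One direction uses that at most $n-v+1$ coordinates of a length-$n$ parking function can be $\ge v$, which forces the $S$-coordinates to take each value in $\{n-m+1,\ldots,n\}$ exactly once when the defining equality holds; the other direction is a direct verification that the remaining coordinates are then constrained only to be a length-$(n-m)$ parking function. Hence, under the orthogonal splitting of $\R^n$ according to $S$ and $[n]\setminus S$, the facet $F_S$ is the Cartesian product $\Pi_m(n-m+1,\ldots,n)\times\PF_{n-m}$, so $\Vol_{n-1}(F_S)=\Vol_{m-1}(\Pi_m)\cdot V_{n-m}$.

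To assemble the formula I would then compute $\operatorname{dist}(\mathbf 1,\operatorname{aff}F_S)=\tfrac{1}{\sqrt m}\bigl(\sum_{\ell=1}^m(n+1-\ell)-m\bigr)=\tfrac{\sqrt m\,(2n-m-1)}{2}$ and use the classical value $\Vol_{m-1}(\Pi_m)=m^{m-3/2}$ (equivalently, $\Pi_m$ has normalized volume $m^{m-2}$; see \cite{Pos}). Summing the contributions $\tfrac1n\Vol_{n-1}(F_S)\operatorname{dist}(\mathbf 1,\operatorname{aff}F_S)$ over the $\binom nm$ subsets of each size $m$ gives $\tfrac1n\sum_m\binom nm m^{m-1}\tfrac{2n-m-1}{2}V_{n-m}$; substituting $k=n-m$ (so $m\in\{1,\ldots,n-2\}\cup\{n\}$ becomes $k\in\{0,2,3,\ldots,n-1\}$) turns this into $\tfrac1n\sum_k\binom nk\tfrac{(n-k)^{n-k-1}(n+k-1)}{2}V_k$. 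The missing $k=1$ summand equals $\binom n1\tfrac{(n-1)^{n-2}n}{2}V_1=0$ since $V_1=0$, so it can be adjoined, producing exactly the stated recursion.

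The main obstacle is the structural claim that $F_S\cong\Pi_m\times\PF_{n-m}$: one must check both the set-theoretic decoupling of the parking condition along $F_S$ and that the two factors lie in orthogonal coordinate subspaces, so that the $(n-1)$-dimensional volume really is the product of the two lower-dimensional volumes. Once that is in hand, the distance computation, the permutahedron volume, and the reindexing $k=n-m$ are routine bookkeeping. It is worth noting that the absence of a $k=1$ facet is not an accident: the $(n-1)$-variable inequality $x_{i_1}+\cdots+x_{i_{n-1}}\le n+\cdots+2$ is implied for $\PF_n$ by $x_1+\cdots+x_n\le n+\cdots+1$ together with $x_j\ge 1$, so it defines no facet --- which is exactly why the term it would contribute must (and does) vanish.
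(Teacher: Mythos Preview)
Your proposal is correct and follows essentially the same strategy as the paper (and the original argument in \cite{AW}): a pyramid decomposition of $\PF_n$ with apex $\mathbf{1}=(1,\ldots,1)$, identification of each non-adjacent facet $F_S$ as a product of a permutahedron $\Pi_m$ with a smaller parking function polytope $\PF_{n-m}$, and then the routine height and volume computations. The paper phrases the product structure as a Minkowski sum of polytopes in orthogonal coordinate subspaces and invokes a mixed-volume decomposition lemma (Lemma~\ref{lem:decomp}) to factor $\Vol_{n-1}(F_S)$, whereas you invoke the Cartesian product directly; these are equivalent.

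One small imprecision worth tightening: you write that a point of $\PF_n$ lies in $F_S$ iff ``its $S$-coordinates are a permutation of $(n-m+1,\ldots,n)$.'' That is only true for vertices; a general point of $F_S$ has $S$-coordinates in the permutahedron $\Pi_m(n-m+1,\ldots,n)$, not at a vertex of it (e.g.\ $(2.5,2.5,1)\in F_{\{1,2\}}\subseteq\PF_3$). Your product claim $F_S\cong\Pi_m\times\PF_{n-m}$ is correct, but the clean way to justify it is via the inequality description, as the paper does for the general $\X_n(a,b)$ case: restricting all defining inequalities of $\PF_n$ to the hyperplane $\sum_{i\in S}x_i=\sum_{\ell=1}^m(n+1-\ell)$ yields precisely the permutahedron inequalities on the $S$-coordinates and the $\PF_{n-m}$ inequalities on the complement, with no cross-constraints.
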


We are able to obtain some immediate corollaries of these results by considering dilations of $\PF_n$.

\begin{definition}
For any positive integer $d$, the \textit{$d$-dilate} of an $\mathbf{x}$-parking function polytope $\Xn$ is given by the following map on points $\varphi_d: \R^n_{\geq 1} \to \R^n_{\geq 1}$ defined by
\[ (x_1, \ldots, x_n) \mapsto (d(x_1-1) + 1, \ldots, d(x_n-1) + 1).\]
In general, a $d$-dilate of any polytope is the image of a map which (up to translation) multiplies all coordinates by $d$.
\end{definition}

\begin{lemma}\label{lem:1,b,n-dilate+a,1,n-dilate}\
\begin{enumerate}
    \item The $\bx$-parking function polytope $\X_{n}(1,b)$ is a $b$-dilate of $\X_{n}(1,1)$. 
    \item Fix $ a \geq 1$. The 
    $\bx$-parking function polytope $\X_{n}(a + (b-1)(a-1) ,b)$ is a $b$-dilate of $\X_{n}(a,1)$.
\end{enumerate}
\end{lemma}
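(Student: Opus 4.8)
The plan is to verify both claims directly from the vertex description in Proposition~\ref{prop:abn_vertex}, checking that the dilation map $\varphi_d$ sends vertices to vertices bijectively, and then invoking the fact that a polytope is the convex hull of its vertices. For part~(1), recall that the vertices of $\X_{n}(1,1)$ are all permutations of $(\underbrace{1,\ldots,1}_k, k+1, k+2, \ldots, n)$ for $0 \le k \le n$. Applying $\varphi_b$ (which sends $x_i \mapsto b(x_i - 1) + 1$) to such a vertex leaves each coordinate equal to $1$ fixed and sends $j \mapsto b(j-1)+1$; in particular $k+m \mapsto b(k+m-1)+1 = 1 + (k+m-1)b$ for $m = 1, \ldots, n-k$. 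This is precisely a vertex of $\X_{n}(1,b)$ by Proposition~\ref{prop:abn_vertex} with $a = 1$. Since $\varphi_b$ is a bijection on $\R^n$ and matches the two vertex sets coordinatewise-permutation by coordinatewise-permutation, it carries $\X_{n}(1,1)$ onto $\X_{n}(1,b)$.

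For part~(2), fix $a \ge 1$ and consider the vertices of $\X_{n}(a,1)$: all permutations of $(\underbrace{1,\ldots,1}_k, a+k, a+k+1, \ldots, a+n-1)$ for $0 \le k \le n$. Again $\varphi_b$ fixes the coordinates equal to $1$, and sends $a + k + (m-1) \mapsto b(a + k + m - 2) + 1$ for $m = 1, \ldots, n-k$. I want this to match a vertex of $\X_{n}(a',b)$ with $a' = a + (b-1)(a-1)$; i.e., I need $b(a+k+m-2)+1 = a' + (k + m - 1)b$ when $k = 0$ gives the ``first'' nontrivial coordinate and the pattern is an arithmetic progression with common difference $b$. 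The common difference already works since $b(\cdot) + 1$ is affine with slope $b$. For the base point, set $m=1$, $k=0$: the image is $b(a-1)+1$, and I want this to equal $a'= a + (b-1)(a-1)$. Indeed $a + (b-1)(a-1) = a - 1 + (a-1)(b-1) + 1 = (a-1)b + 1 = b(a-1)+1$, so the identity holds. Thus $\varphi_b$ sends each vertex of $\X_{n}(a,1)$ to the corresponding vertex of $\X_{n}(a+(b-1)(a-1), b)$, and the argument concludes as before.

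The only genuinely delicate point is bookkeeping: one must confirm that the \emph{set} of vertices (not merely a few representatives) maps over, including the degenerate layers $k = n$ (the all-ones vertex, which is fixed) and, when $a > 1$, the distinct layers $k = n-1$ versus $k=n$ — but this is handled uniformly since the index $k$ ranges over the same set $0 \le k \le n$ in both the source and target vertex descriptions and $\varphi_b$ respects layers. I also note for part~(2) that $a + (b-1)(a-1) \ge 1$, so the target is a legitimate $\bx$-parking function polytope. No minimal-inequality or volume machinery is needed; the claim is purely about the affine image of the vertex set. I would phrase the final proof as a short paragraph per part, each reducing to the one-line arithmetic identity $b(a-1)+1 = a + (b-1)(a-1)$ for part~(2) and the triviality $b(1-1)+1 = 1$, $b(k+m-1)+1 = 1 + (k+m-1)b$ for part~(1).
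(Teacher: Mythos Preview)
Your proposal is correct and takes exactly the same approach as the paper: apply the dilation map $\varphi_b$ to the vertex description of Proposition~\ref{prop:abn_vertex} and check that it bijects the vertex sets. The paper's proof is a one-sentence version of what you have written out in detail; your explicit verification of the arithmetic identity $b(a-1)+1 = a + (b-1)(a-1)$ is precisely what the paper leaves implicit.
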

\begin{proof}
This follows from applying the $b$-dilate map $\varphi_b$ onto all the vertices of $\X_{n}(1,1)$ and $\X_{n}(a,1)$ given by Proposition \ref{prop:abn_vertex}, which results in all the vertices of $\X_{n}(1,b)$ and $\X_{n}(a + (b-1)(a-1) ,b)$.
\end{proof}

Knowing that one polytope is a dilate of another allows for more direct approach to finding volumes for some of these parking function polytopes.

\begin{corollary}\label{cor:b^n+a,1,n-vol}\
\begin{enumerate}
    \item Fix a positive integer $b$. Then $\Vol(\X_{n}(1,b)) = b^n\Vol(\X_{n}(1,1))$.
    \item Fix positive integers $a,b$. Then $\Vol(\X_{n}(a+ (b-1)(a-1),b)) = b^n\Vol(\X_{n}(a,1))$.
\end{enumerate}
\end{corollary}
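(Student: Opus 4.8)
The plan is to deduce the corollary directly from Lemma~\ref{lem:1,b,n-dilate+a,1,n-dilate} together with the elementary fact that a $d$-dilation of a region in $\R^n$ multiplies its Euclidean volume by $d^n$. First I would observe that the $b$-dilate map $\varphi_b$ sending $(x_1,\ldots,x_n) \mapsto (b(x_1-1)+1,\ldots,b(x_n-1)+1)$ is an affine transformation of $\R^n$ whose linear part is the scalar matrix $b\,I_n$ and whose translational part is $(1-b)(1,\ldots,1)$. Translations preserve volume, and the linear part has determinant $b^n$, so for any full-dimensional polytope $P \subseteq \R^n$ one has $\Vol(\varphi_b(P)) = b^n\Vol(P)$ by the change-of-variables formula. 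Each $\bx$-parking function polytope $\Xn$ is full-dimensional in $\R^n$ (as recorded by its inequality description in Proposition~\ref{prop:inequality}; in the degenerate low cases the stated identities merely read $0 = b^n\cdot 0$), so this applies.

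Now for part (1), Lemma~\ref{lem:1,b,n-dilate+a,1,n-dilate}(1) states that $\X_{n}(1,b) = \varphi_b(\X_{n}(1,1))$, whence $\Vol(\X_{n}(1,b)) = b^n\Vol(\X_{n}(1,1))$. For part (2), Lemma~\ref{lem:1,b,n-dilate+a,1,n-dilate}(2) states that $\X_{n}(a+(b-1)(a-1),b) = \varphi_b(\X_{n}(a,1))$, whence $\Vol(\X_{n}(a+(b-1)(a-1),b)) = b^n\Vol(\X_{n}(a,1))$.

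I do not anticipate any real obstacle: the substance is entirely contained in the preceding lemma, and the remaining ingredient — that a $d$-dilate scales volume by $d^n$ — is the standard one-line change-of-variables computation. The only points requiring a moment's care are that $\varphi_b$ is a bijection of $\R^n$ (immediate, since it is an invertible affine map), so that it carries the polytope $\X_{n}(1,1)$ (resp.\ $\X_{n}(a,1)$) precisely onto $\X_{n}(1,b)$ (resp.\ $\X_{n}(a+(b-1)(a-1),b)$), and that we are comparing volumes of polytopes living in the same ambient $\R^n$ of the same dimension.
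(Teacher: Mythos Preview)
Your proposal is correct and follows essentially the same approach as the paper: invoke Lemma~\ref{lem:1,b,n-dilate+a,1,n-dilate} and the fact that a $b$-dilate of an $n$-dimensional polytope scales volume by $b^n$. The paper's proof is a terse two-sentence version of what you wrote; your additional remarks about the affine structure of $\varphi_b$ and the change-of-variables justification are fine elaborations but not required.
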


\begin{proof}
Note that the dilation of an $n$-dimensional polytope by a factor of $b$ increases the volume by a factor of $b^n$. The result then follows from Lemma \ref{lem:1,b,n-dilate+a,1,n-dilate}.
\end{proof}

Using the following result on partial permutahedra, we can then calculate the volume of more $\mathbf{x}$-parking function polytopes.
\begin{theorem}[Theorem 4.2, \cite{BCC}]
For any $n$ and $p$ with $p \geq n-1$, the normalized volume of $\mathcal{P}(n,p)$ is given recursively by
\[ \nVol(\mathcal{P}(n,p)) = (n-1)! \sum_{k=1}^n k^{k-2} \frac{\nVol(\mathcal{P}(n-k,p-k))}{(n-k)!}\left(kp-\binom{k}{2}\right) \binom{n}{k},\]
with the initial condition $\nVol(\mathcal{P}(0,p))=1$.
\end{theorem}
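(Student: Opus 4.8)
The plan is to compute the Euclidean volume $\Vol_n(\mathcal{P}(n,p))$ by the pyramid (star) decomposition of the polytope from the vertex $\mathbf{0}=(0,\dots,0)$ (the $k=0$ vertex of Definition \ref{def:pp-vertices}) and then multiply by $n!$. For $p\ge n$, the facets of $\mathcal{P}(n,p)$ are the $n$ coordinate facets $\{x_i=0\}$, which all contain $\mathbf{0}$, together with the $2^n-1$ facets
\[ F_S:=\mathcal{P}(n,p)\cap\Big\{x\in\R^n:\textstyle\sum_{i\in S}x_i=c_S\Big\},\qquad c_S:=|S|p-\binom{|S|}{2}=p+(p-1)+\cdots+(p-|S|+1),\]
for $\emptyset\ne S\subseteq[n]$, none of which contains $\mathbf{0}$; I would read this facet description off from Proposition \ref{prop:inequality} (with $b=1$) via the shift of Proposition \ref{prop:x-pfp_pp}, with the count matching Corollary \ref{thm:x-facets}. (When $p=n-1$ the subsets with $|S|=n-1$ no longer give facets, but their contribution below will vanish on its own, so the formula should persist.) Since $\mathbf{0}$ is a vertex, $\mathcal{P}(n,p)=\bigcup_{S}\Pyr(F_S,\mathbf{0})$ with disjoint interiors, giving
\[ \Vol_n(\mathcal{P}(n,p))=\frac1n\sum_{\emptyset\ne S\subseteq[n]}h_S\,\Vol_{n-1}(F_S),\qquad h_S=\frac{c_S}{\sqrt{|S|}},\]
where $h_S$ is the distance from $\mathbf{0}$ to the affine hull $\{\sum_{i\in S}x_i=c_S\}$ of $F_S$.

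The crux of the argument will be to identify each $F_S$ as a product; write $k=|S|$. On $F_S$ the sum $\sum_{i\in S}x_i$ is saturated at its maximum $c_S$, so combining this equation with the upper bounds $\sum_{i\in T}x_i\le c_T$ over $T\subseteq S$ should reproduce exactly Rado's inequality description of the permutahedron $\Pi_k(p-k+1,\dots,p)$ on the coordinates indexed by $S$; meanwhile, for $T\subseteq[n]\setminus S$ the constraint $\sum_{i\in S\cup T}x_i\le c_{S\cup T}$ simplifies — using $\binom{a+b}{2}=\binom a2+\binom b2+ab$ — to $\sum_{i\in T}x_i\le|T|(p-k)-\binom{|T|}{2}$, which is the partial-permutahedron constraint for $\mathcal{P}(n-k,p-k)$, and the coordinate bounds reduce in the same way. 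I would then check the converse: any pair $(y,z)$ with $y\in\Pi_k(p-k+1,\dots,p)$ and $z$ in the relevant translate of $\mathcal{P}(n-k,p-k)$ satisfies every defining inequality of $\mathcal{P}(n,p)$ and lies on $F_S$ (the mixed partial sums again reduce to the inequality $|T_1|\le k$). Hence $F_S$ is the direct product of these two polytopes, and because the affine span of the first factor, $\{\sum_{i\in S}x_i=c_S\}\subseteq\R^S$, is orthogonal to $\R^{[n]\setminus S}$,
\[ \Vol_{n-1}(F_S)=\Vol_{k-1}\!\big(\Pi_k(p-k+1,\dots,p)\big)\cdot\Vol_{n-k}\!\big(\mathcal{P}(n-k,p-k)\big).\]

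To finish, I would invoke the classical fact that $\Pi_k$ is the graphical zonotope of $K_k$: translation does not affect volume, so $\Vol_{k-1}(\Pi_k(p-k+1,\dots,p))=\Vol_{k-1}(\Pi_k)$ equals the covolume $\sqrt k$ of the lattice $\Z^k\cap\{\sum x_i=0\}$ times the number $k^{k-2}$ of spanning trees of $K_k$ (Cayley), i.e.\ $\Vol_{k-1}(\Pi_k)=\sqrt k\,k^{k-2}$. Substituting into the pyramid sum, using that the $\binom nk$ subsets of size $k$ contribute equally, multiplying by $n!$, and writing $\Vol_{n-k}(\mathcal{P}(n-k,p-k))=\nVol(\mathcal{P}(n-k,p-k))/(n-k)!$, the factors $\sqrt k$ cancel and the claimed recursion drops out, with base case $\nVol(\mathcal{P}(0,p))=1$ since $\mathcal{P}(0,p)$ is a point; in the $p=n-1$ regime the $k=n-1$ term is killed because $\mathcal{P}(1,0)=\{\mathbf{0}\}$ is not full-dimensional. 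The main obstacle I anticipate is the rigorous product decomposition of $F_S$ — nailing down the exact inequality descriptions of both factors and the $p=n-1$ degeneracies — together with the bookkeeping reconciling Euclidean volumes, the covolume $\sqrt k$, and normalized volumes so that every factor lands correctly; the zonotopal volume of $\Pi_k$ is standard and may simply be quoted.
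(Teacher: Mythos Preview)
The paper does not give its own proof of this statement---it is quoted from \cite{BCC}. Your approach is correct, and it is essentially the same as the paper's proof of the generalization Theorem~\ref{thm:generalized_recursive_volume} (which specializes to this result via Proposition~\ref{prop:x-pfp_pp}): pyramid decomposition from the all-ones (equivalently all-zeros) vertex, identification of each facet not through the apex as the product of a permutahedron $\Pi_k$ and a lower-dimensional copy of the polytope sitting in orthogonal coordinate subspaces, and the permutahedron volume $k^{k-2}\sqrt{k}$ of Lemma~\ref{lem:euclidean_volume}. The only cosmetic difference is that the paper packages the facet volume computation via mixed volumes and Lemma~\ref{lem:decomp} rather than as an orthogonal direct product; your handling of the $p=n-1$ degeneracy (the would-be $|S|=n-1$ facets contribute zero because $\mathcal{P}(1,0)$ is a point) is also how the paper's case $a=1$ collapses.
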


\begin{remark}
By Proposition \ref{prop:x-pfp_pp}, $\mathfrak{X}_{n}(a,1)$ for $a>1$ is integrally equivalent to $\mathcal{P}(n,p)$ where $p = n + a - 2 >n-1$. 
Thus, the volume $\Vol(\mathfrak{X}_{n}(a+ (b-1)(a-1),b))$ can be calculated using Corollary \ref{cor:b^n+a,1,n-vol}(2), and converting between volume and normalized volume.
\end{remark}

These results, however, do not give $\Vol(\Xn)$ for all $a,b \geq 1$. 
To do so, we need a more general construction, which is given by the following recursive volume formula.

\begin{theorem}\label{thm:generalized_recursive_volume}
Fix two positive integers $a,b$.
Define a sequence $\{V_n^{a,b}\}_{n \geq 0}$ by $V^{a,b}_0=1$ and $V^{a,b}_n = \Vol(\Xn)$ for all positive integers $n$.
Then $V^{a,b}_1 = a-1$ and for $n \geq 2$, $V^{a,b}_n$ is given recursively by 
\[ V_n^{a,b} = \frac{1}{n}\sum_{k=0}^{n-1}\binom{n}{k} \frac{(b(n-k))^{n-k-1}(nb+kb-b+2a-2)}{2}V^{a,b}_{k}.\]
\end{theorem}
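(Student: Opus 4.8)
The plan is to compute $\Vol(\Xn)$ by the standard pyramid decomposition of a polytope from one of its vertices, generalizing the approach of Amanbayeva--Wang underlying Theorem~\ref{thm:mit_volume}. For any $n$-dimensional polytope $P$ and any vertex $v_0$ of $P$, the pyramids $\Pyr(F,v_0):=\conv(F\cup\{v_0\})$, as $F$ runs over the facets of $P$ not containing $v_0$, have pairwise disjoint interiors and union $P$; hence
\[
\Vol(P)=\frac1n\sum_{F\not\ni v_0}\operatorname{dist}\!\bigl(v_0,\operatorname{aff}F\bigr)\cdot\Vol_{n-1}(F).
\]
I would take $P=\Xn$ and $v_0=\mathbf 1=(1,\dots,1)$. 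By the minimal inequality description in Proposition~\ref{prop:inequality}, the facets containing $\mathbf 1$ are exactly the coordinate facets $x_i=1$, so the facets to sum over are the partial-sum facets
\[
F_S:=\Bigl\{x\in\Xn:\ \textstyle\sum_{i\in S}x_i=M_{|S|}\Bigr\},\qquad\emptyset\neq S\subseteq[n],
\]
where $M_t:=\sum_{j=0}^{t-1}\bigl(a+(n-1-j)b\bigr)=ta+\tfrac{bt(2n-1-t)}{2}$ is the largest value a sum of $t$ vertex-coordinates can take. (If $a=1$ and $|S|=n-1$ this is not genuinely a facet; I handle that at the end.)

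The heart of the argument is to identify each $F_S$ as a direct product. Split $\R^n=\R^S\times\R^{[n]\setminus S}$ and set $t=|S|$. A vertex of $\Xn$, which by Proposition~\ref{prop:abn_vertex} is a permutation of $(1^{k},a+kb,\dots,a+(n-1)b)$, lies on $F_S$ precisely when $k\le n-t$ and its $t$ largest entries — the values $a+(n-t)b,\dots,a+(n-1)b$ — occupy the coordinates indexed by $S$; the remaining $n-t$ coordinates are then a permutation of $(1^{k},a+kb,\dots,a+(n-t-1)b)$, that is, a vertex of $\mathcal{X}_{n-t}(a,b)$ (again by Proposition~\ref{prop:abn_vertex}, with $0\le k\le n-t$). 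Thus the vertex set of $F_S$ is the Cartesian product of the set of orderings of $(a+(n-t)b,\dots,a+(n-1)b)$ along $S$ with the vertex set of $\mathcal{X}_{n-t}(a,b)$ along $[n]\setminus S$, and since $\conv(A\times B)=\conv(A)\times\conv(B)$,
\[
F_S\ \cong\ Q_t\times\mathcal{X}_{n-t}(a,b),\qquad Q_t:=\Pi_t\bigl(a+(n-t)b,\dots,a+(n-1)b\bigr).
\]
As the defining vector of $Q_t$ is an arithmetic progression of common difference $b$, $Q_t$ is a translate of the $b$-dilate of the standard permutahedron $\Pi_t$.

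Assembling the count is then routine. The factor subspaces $\{(x_i)_{i\in S}:\sum_{i\in S}x_i=M_t\}\subseteq\R^S$ and $\R^{[n]\setminus S}$ are orthogonal, so $\Vol_{n-1}(F_S)=\Vol_{t-1}(Q_t)\cdot V^{a,b}_{n-t}=b^{t-1}\Vol_{t-1}(\Pi_t)\cdot V^{a,b}_{n-t}$, where $\Vol_{t-1}(\Pi_t)=t^{t-2}\sqrt t$ because the relative volume of $\Pi_t$ with respect to the root lattice $A_{t-1}$ is $t^{t-2}$ and $\operatorname{covol}(A_{t-1})=\sqrt t$. The apex distance is $\operatorname{dist}(\mathbf 1,\operatorname{aff}F_S)=(M_t-t)/\sqrt t$, and $M_t-t=\tfrac t2\bigl(b(2n-1-t)+2a-2\bigr)$. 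Multiplying these, the $\sqrt t$'s cancel, and since there are $\binom nt$ subsets of size $t$,
\[
V^{a,b}_n=\frac1n\sum_{t=1}^{n}\binom nt\,\frac{(bt)^{t-1}\bigl(b(2n-1-t)+2a-2\bigr)}{2}\,V^{a,b}_{n-t},
\]
which is the claimed recursion after substituting $k=n-t$ and using $b(2n-1-t)+2a-2=nb+kb-b+2a-2$. The base cases $V^{a,b}_0=1$ and $V^{a,b}_1=\Vol([1,a])=a-1$ are immediate, and when $a=1$ the non-facet $F_S$ with $|S|=n-1$ contributes exactly the $k=1$ summand, which is $0$ since $V^{1,b}_1=a-1=0$, so the formula holds in all cases. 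The main point requiring care is the product description of $F_S$: one must check that $M_t$ is the strict maximum of $\sum_{i\in S}x_i$ over $\Xn$, attained only by vertices placing their $t$ largest coordinates exactly on $S$, so that no point of $F_S$ can push a ``large'' value outside $S$. The remaining ingredients — validity of the pyramid decomposition, the value $\Vol_{t-1}(\Pi_t)=t^{t-2}\sqrt t$, and multiplicativity of $(n-1)$-dimensional volume across orthogonal factor subspaces — are standard.
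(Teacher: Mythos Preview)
Your proof is correct and follows essentially the same approach as the paper: a pyramid decomposition of $\Xn$ from the all-ones vertex, with each partial-sum facet $F_S$ identified as (the product of) a smaller $\bx$-parking function polytope and a dilated permutahedron, then summing $\frac1n\cdot\text{height}\cdot\text{base volume}$. The only differences are cosmetic---you frame each facet as a Cartesian product and use multiplicativity of volume directly, whereas the paper phrases it as a Minkowski sum in orthogonal subspaces and invokes the mixed-volume lemma (Lemma~\ref{lem:decomp}); and you treat the $a=1$ case uniformly by observing that the $|S|=n-1$ non-facets contribute the $k=1$ summand which vanishes since $V^{1,b}_1=0$, whereas the paper handles $a=1$ separately via the dilation argument of Corollary~\ref{cor:b^n+a,1,n-vol}.
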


In the proof of this theorem, we will use the following decomposition lemma.

\begin{lemma}[Proposition 4.1, \cite{AW}; Proposition 2, \cite{DGH}; Section 19.4, \cite{BZ}]\label{lem:decomp}
Let $K_1, \ldots, K_n$ be some convex bodies of $\R^n$ and suppose that $K_{n-m+1, \ldots, K_n}$ are contained in some $m$-dimensional affine subspace $U$ of $\R^n$. 
Let $MV_U$ denote the mixed volume with respect to the $m$-dimensional volume measure on $U$, and let $MV_{U^\perp}$ be defined similarly with respect to the orthogonal complement $U^\perp$ of $U$.
Then the mixed volume of $K_1, \ldots, K_n$
\begin{align*}
    &MV(K_1, \ldots, K_{n-m}, K_{n-m+1}, \ldots, K_n)\\
    &= \binom{n}{m}^{-1} MV_{U^\perp} (K_1', \ldots, K_{n-m}')MV_U(K_{n-m+1, \ldots, K_n),}
\end{align*}
where $K_1', \ldots, K_{n-m}'$ denote the orthogonal projections of $K_1, \ldots, K_{n-m}$ onto $U^\perp$, respectively.
\end{lemma}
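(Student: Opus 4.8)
The plan is to derive the identity directly from the definition of mixed volume as the polarization of the volume polynomial, combined with a Fubini-type slicing of the Minkowski sum along $U^\perp$. Recall Minkowski's theorem: for convex bodies $L_1, \ldots, L_r$ and scalars $t_1, \ldots, t_r \geq 0$, the quantity $\Vol(t_1 L_1 + \cdots + t_r L_r)$ is a polynomial in the $t_i$, and the mixed volume $MV$ (normalized so that $MV(K, \ldots, K) = \Vol(K)$) is defined so that the coefficient of $t_1 \cdots t_n$ in $\Vol(t_1 L_1 + \cdots + t_n L_n)$ equals $n!\,MV(L_1, \ldots, L_n)$. Since mixed volumes, projections, and all volumes involved are translation invariant, I would first translate the entire configuration so that the affine subspace $U$ becomes a linear subspace through the origin; this alters none of the quantities in the statement.

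Next I would introduce the combined Minkowski sum with separate parameters for the two groups of bodies,
\[
A := \sum_{i=1}^{n-m} \lambda_i K_i, \qquad B := \sum_{j=1}^{m} \mu_j K_{n-m+j} \subseteq U,
\]
and study $\Vol_n(A + B)$ as a polynomial in $\lambda_1, \ldots, \lambda_{n-m}, \mu_1, \ldots, \mu_m \geq 0$. By the normalization above, the coefficient of $\lambda_1 \cdots \lambda_{n-m}\mu_1 \cdots \mu_m$ in this polynomial is exactly $n!\,MV(K_1, \ldots, K_n)$. The heart of the argument is to compute that same coefficient a second way, using the slicing
\[
\Vol_n(A+B) = \int_{\pi(A)} \Vol_m\big((A+B) \cap (y + U)\big)\, dy,
\]
where $\pi: \R^n \to U^\perp$ is the orthogonal projection and the integration region is $\pi(A+B) = \pi(A)$ because $B \subseteq U$ forces $\pi(B) = \{0\}$. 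The key geometric observation is the fiber identity $(A+B) \cap (y+U) = \big(A \cap (y+U)\big) + B$, valid for every $y \in \pi(A)$, which follows by writing a point of the slice as $a+b$ with $b \in B \subseteq U$ and comparing projections.

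I would then extract the coefficient of $\mu_1 \cdots \mu_m$ by applying $\partial_{\mu_1} \cdots \partial_{\mu_m}\big|_{\mu = 0}$ and differentiating under the integral sign. For each fixed $y$, setting $C_y := A \cap (y+U)$, the fiber volume $\Vol_m(C_y + B)$ is a polynomial in the $\mu_j$ whose coefficient of $\mu_1 \cdots \mu_m$ is $m!\, MV_U(K_{n-m+1}, \ldots, K_n)$ --- crucially independent of $C_y$, since that monomial uses each $K_{n-m+j}$ exactly once and leaves no slot for $C_y$. Integrating this constant over $\pi(A)$ gives $m!\,MV_U(K_{n-m+1}, \ldots, K_n)\cdot \Vol_{n-m}(\pi(A))$. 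Because projection commutes with Minkowski sums, $\pi(A) = \sum_{i=1}^{n-m}\lambda_i K_i'$ with $K_i' = \pi(K_i)$, so $\Vol_{n-m}(\pi(A))$ is a homogeneous degree-$(n-m)$ polynomial in the $\lambda_i$ whose coefficient of $\lambda_1 \cdots \lambda_{n-m}$ is $(n-m)!\,MV_{U^\perp}(K_1', \ldots, K_{n-m}')$. Equating the two evaluations of the coefficient of $\lambda_1 \cdots \lambda_{n-m}\mu_1 \cdots \mu_m$ yields $n!\,MV(K_1, \ldots, K_n) = m!\,(n-m)!\,MV_U(K_{n-m+1}, \ldots, K_n)\,MV_{U^\perp}(K_1', \ldots, K_{n-m}')$, and dividing by $n!$ produces the claimed formula with the factor $\binom{n}{m}^{-1}$.

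The main obstacle I anticipate is bookkeeping rather than conceptual: one must keep the two normalization conventions (the global $n!$ from polarization and the multinomial factors hidden in each fiber expansion) perfectly consistent, and justify interchanging differentiation with the integral --- routine once the integrand is recognized as a polynomial in $\mu$ with locally bounded, $y$-measurable coefficients. A secondary point requiring care is the degenerate case in which $A$ or some fiber $C_y$ is lower-dimensional; this causes no difficulty, precisely because the coefficient of $\mu_1\cdots\mu_m$ is insensitive to $C_y$, and $\Vol_{n-m}(\pi(A))$ is handled uniformly by Minkowski's theorem regardless of dimensionality.
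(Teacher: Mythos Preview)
Your argument is correct. The polarization-plus-Fubini approach you outline is the standard route to this decomposition formula: the fiber identity $(A+B)\cap(y+U)=(A\cap(y+U))+B$ holds exactly as you say, the $\mu_1\cdots\mu_m$ coefficient of $\Vol_m(C_y+B)$ is indeed $m!\,MV_U(K_{n-m+1},\ldots,K_n)$ independently of $C_y$ (since that monomial leaves no slot for $C_y$), and the exchange of coefficient extraction with integration is immediate because the $\mu$-expansion is a finite sum. The bookkeeping you flag as the main obstacle is handled correctly, and the final identification $n!=\binom{n}{m}m!(n-m)!$ gives the stated factor.

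Note, however, that the paper does not supply its own proof of this lemma: it is quoted as a known result with citations to Amanbayeva--Wang, Dickenstein--Giansiracusa--Hermann, and Burago--Zalgaller, and is then used as a black box in the proof of the recursive volume formula. So there is no ``paper's proof'' to compare against; you have provided a self-contained proof where the paper simply appeals to the literature. Your argument is essentially the classical one found in the convex-geometry references, so nothing is lost by including it.
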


We will also use the following fact.

\begin{lemma}[Lemma 4.1, \cite{BCC}]\label{lem:euclidean_volume}
The Euclidean volume of the regular permutahedron $\Pi_n \subseteq \R^n$ is $n^{n-2}\sqrt{n}$.
\end{lemma}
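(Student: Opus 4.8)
The plan is to use the fact that the regular permutahedron is a zonotope and to apply the classical formula expressing a zonotope's volume as a sum over its independent generating subsets. Since volume is translation invariant, I would first recall the standard fact (see, e.g., \cite{Pos, Ziegler}) that, up to a translation,
\[
\Pi_n \;=\; \sum_{1 \le i < j \le n} [0,\, e_i - e_j],
\]
the graphical zonotope of the complete graph $K_n$; its $\binom{n}{2}$ generators $e_i - e_j$ all lie in the $(n-1)$-dimensional linear subspace $H_0 = \{x \in \R^n : x_1 + \cdots + x_n = 0\}$, which is the direction space of $\operatorname{aff}(\Pi_n)$. For a self-contained check, one can compare vertices: the vertices of the right-hand side are indexed by the acyclic orientations of $K_n$, equivalently by the total orders on $\{1, \dots, n\}$, hence by the same permutations that index the vertices of $\Pi_n$, and the two vertex sets coincide after translating by $-\tfrac12\sum_{i<j}(e_i - e_j)$.

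Next I would invoke the zonotope volume formula: for vectors $v_1, \dots, v_N$ spanning a $d$-dimensional space, the $d$-dimensional volume of $\sum_{k} [0, v_k]$ equals $\sum_{S} \operatorname{vol}_d(P_S)$, where $S$ ranges over the linearly independent $d$-element subsets of $\{v_1, \dots, v_N\}$ and $P_S$ denotes the parallelepiped they span. Here $d = n-1$, and a set of $n-1$ of the vectors $e_i - e_j$ is linearly independent in $H_0$ precisely when the corresponding $n-1$ edges form a spanning tree of $K_n$; by Cayley's formula there are exactly $n^{n-2}$ of these.

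The crux is that every spanning tree $T$ contributes the same $(n-1)$-volume, namely $\sqrt n$. Orienting the edges of $T$ arbitrarily and letting $B$ be the $n \times (n-1)$ vertex--edge incidence matrix (whose columns are the vectors $\pm(e_i - e_j)$ spanning $P_T$), the Gram determinant formula gives $\operatorname{vol}_{n-1}(P_T)^2 = \det(B^\top B)$. Now $B B^\top$ equals the graph Laplacian $L_T$, and $B^\top B$ and $L_T$ have the same nonzero eigenvalues, so by the matrix--tree theorem $\det(B^\top B) = \prod(\text{nonzero eigenvalues of } L_T) = n \cdot \tau(T) = n$, since the tree $T$ has $\tau(T) = 1$ spanning trees. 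Hence $\operatorname{vol}_{n-1}(P_T) = \sqrt n$, and summing over all $n^{n-2}$ spanning trees gives $\operatorname{vol}_{n-1}(\Pi_n) = n^{n-2}\sqrt n$, as claimed.

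I expect the only real obstacle to be expository rather than mathematical: being careful that ``Euclidean volume'' refers to the intrinsic $(n-1)$-dimensional volume inside $\operatorname{aff}(\Pi_n)$, and invoking the zonotope volume formula with the correct normalization. If one wishes to sidestep the matrix--tree theorem, the identity $\det(B^\top B) = n$ follows directly from Cauchy--Binet together with the fact that deleting any single row from the incidence matrix of a tree leaves a matrix of determinant $\pm 1$. As sanity checks, the formula returns $\sqrt 2$ for the segment $\Pi_2$ and $3\sqrt 3$ for the hexagon $\Pi_3$, both of which are immediate by hand.
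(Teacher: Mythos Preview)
Your proof is correct. The zonotope approach is the standard one: realize $\Pi_n$ as the graphical zonotope of $K_n$, invoke the tiling of a zonotope by parallelepipeds indexed by bases of the underlying matroid (here spanning trees of $K_n$, counted by Cayley's formula), and compute that each tree contributes a parallelepiped of $(n-1)$-volume $\sqrt{n}$ via the Gram determinant $\det(B^\top B)=n$. The sanity checks for $n=2,3$ are right.

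There is nothing to compare against in this paper: the lemma is quoted here from \cite{BCC} as a known fact, with no proof given. So your argument is not ``the same as the paper's'' nor ``different from the paper's''---it simply supplies a proof where the paper offers only a citation. If anything, one could note that the zonotope-plus-Cayley argument you give is essentially the one that appears in the literature (e.g., it underlies Stanley's computation that the normalized lattice volume of $\Pi_n$ is $n^{n-2}$, which becomes $n^{n-2}\sqrt{n}$ once one passes from lattice volume in $H_0$ to Euclidean volume via the covolume $\sqrt{n}$ of the $A_{n-1}$ lattice). Your remark that Cauchy--Binet plus total unimodularity of tree incidence matrices gives $\det(B^\top B)=n$ without invoking matrix--tree is a nice alternative for readers who prefer to avoid that theorem.
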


\begin{proof}[Proof of Theorem \ref{thm:generalized_recursive_volume}]
The case where $a=1$ follows from Theorem \ref{thm:mit_volume} and Corollary \ref{cor:b^n+a,1,n-vol}(1). 
They imply that 
\begin{align*}
    V^{1,b}_n &= b^n V_n \\
    &= \frac{b^n}{n} \sum_{k=0}^{n-1} \binom{n}{k} \frac{(n-k)^{n-k-1}(n+k-1)}{2} V_k\\
    &= \frac{b^n}{n} \sum_{k=0}^{n-1} \binom{n}{k} \frac{(n-k)^{n-k-1}(n+k-1)}{2} \frac{V^{1,b}_k}{b^k}\\
    &= \frac{1}{n} \sum_{k=0}^{n-1} \binom{n}{k} \frac{b^{n-k}(n-k)^{n-k-1}(n+k-1)}{2}V^{1,b}_k.
\end{align*}

For the case where $a>1$, we generalize the proof of Theorem 4.1 in \cite{AW}. 
Divide $\Xn$ into $n$-dimensional (full dimensional) pyramids with facets of $\Xn$ which do not contain $I = (1, \ldots, 1)$ as the base, and point $I$ as a vertex. 
For an example, see Figure \ref{fig:proof}.
Recall from Theorem \ref{thm:x-facets} that there are $2^n-1+n$ facets since $a>1$.
Of those, exactly $n$ have $I$ as a vertex, so the number of pyramids is $2^n-1$.
Each pyramid has a base which is a facet $F$ with points of $\Xn$ satisfying the equation
\[ x_{i_1} + x_{i_2} + \cdots + x_{i_k}  = ((n-k)b+a) + \cdots + ((n-2)b + a) + ((n-1)b+a),\]
for some $k \in \{1,2,\ldots, n-1, n\}$ and distinct $i_1 < \cdots < i_k$, due to the defining inequalities of $\Xn$. 

Now let $\{j_1, j_2, \ldots, j_{n-k}\} = \{1,2,\ldots, n\} \setminus \{i_1, i_2, \ldots, i_k\}$. Let $\X'_{n-k}(a,b)$ be the polytope containing all points $x'$ such that $x_p' = 0$ for all $p \in \{i_1, i_2, \ldots, i_k\}$ and for some $x \in F$, $x_p' = x_p$ for all $p \in \{j_1, j_2, \ldots, j_{n-k}\}$. 
Then $\X'_{n-k}(a,b)$ is an $(n-k)$-dimensional polytope, with the following defining inequalities:
\begin{align*}
\intertext{\emph{For all} $1 \leq p \leq n-k$,}
    1 \leq x_{j_p}' &\leq ((n-k-1)b+a),  \\
\intertext{\emph{for all} $1 \leq p < q \leq n-k$,}    
    x_{j_p}' + x_{j_q}' &\leq ((n-k-2)b+a) + ((n-k-1)b+a),\\
\intertext{\emph{for all} $1 \leq p < q < r \leq n-k$,}    
    x_{j_p}' + x_{j_q}' + x_{j_r}' &\leq ((n-k-3)b+a) + ((n-k-2)b+a) + ((n-k-1)b+a),\\
    &\vdots\\
\intertext{\emph{for all} $1 \leq p_1 < p_2 < \cdots <p_{n-k-1} \leq n-k$,}
    x_{j_{p_1}}' + x_{j_{p_2}}' + \cdots + x_{j_{p_{n-k-1}}}' &\leq (b+a) + (2b+a) + \cdots + ((n-k-1)b+a),\\
\intertext{\emph{and}}    
    x_{j_{p_1}}' + x_{j_{p_2}}' + \cdots + x_{j_{p_{n-k}}}' &\leq a + (b+a) + \cdots + ((n-k-1)b+a).
\end{align*}
By comparing with the inequality description given in Proposition \ref{prop:inequality}, we see that $\X'_{n-k}(a,b) \cong \X_{n-k}(a,b)$, as the defining inequalities are the same. 
Hence the $(n-k)$-dimensional volume $\Vol_{n-k}(\X'_{n-k}(a,b)) = \Vol_{n-k}(\X_{n-k}(a,b)) = V_{n-k}^{a,b}$.

Let $Q^{a,b}_k$ be the polytope containing all points $x'$ such that for all $p \in \{j_1, j_2, \ldots, j_{n-k}\}$, we have $x_p'=0$, and for some $x \in F$, we have $x_p' = x_p$ for all $p \in \{i_1, i_2, \ldots, i_k\}$. 
Then the coordinate values of $(x_{i_1}', x_{i_2}', \ldots, x_{i_k}')$ of the vertices of $Q^{a,b}_k$ are the permutations of $((n-k)b+a, \ldots, (n-2)b + a, (n-1)b+a)$.
Note that $Q^{a,b}_k$ is a translate of the polytope $Q^{1,b}_k$ (both are $(k-1)$-dimensional polytopes), so $Q^{a,b}_k$ and $Q^{1,b}_k$ have the same $(k-1)$-dimensional volume.
Furthermore, $Q^{1,b}_k$ is a $b$-dilate of $Q^{1,1}_k$, which is integrally equivalent to $\Pi(1, \ldots, k) = \Pi_{k-1}$, the regular permutahedron. 
As $\Pi_{k-1}$ has volume $k^{k-2} \sqrt{k}$ by Lemma \ref{lem:euclidean_volume}, then $Q^{a,b}_k$ has volume $b^{k-1}k^{k-2} \sqrt{k}$.

Thus, $F$ is a Minkowski sum of two polytopes $\X'_{n-k}(a,b)$ and $Q^{a,b}_k$ which lie in two orthogonal subspaces of $\R^n$. 
Therefore, by Lemma \ref{lem:decomp}, the volume of $F$ is
\[ \Vol(F) = \sum_{p_1, \ldots, p_n = 1}^2 MV(K_{p_1}, K_{p_2}, \ldots, K_{p_n}) = V^{a,b}_{n-k} \cdot b^{k-1}k^{k-2} \sqrt{k},\]
where $K_1 = \X_{n-k,a,b}'$ and $K_2 = Q^{a,b}_k$. 
Then the volume of $\Pyr(I,F)$, the pyramid with $I$ as the vertex over base $F$ is
\[ \Vol(\Pyr(I,F)) = \frac{1}{n}h_k\Vol(F) = \frac{1}{n}h_k V^{a,b}_{n-k} \cdot b^{k-1}k^{k-2} \sqrt{k},\]
where $h_k$ denotes the minimum distance from point $I$ to the face $F$.
We calculate that
\begin{align*}
    h_k &= \frac{|1+ \cdots + 1 -(((n-k)b+a) + \cdots + ((n-2)b + a) + ((n-1)b+a))|}{\sqrt{1+\cdots +1}}\\
    &= \frac{|k - k((2n-k-1)b+2a)/2|}{\sqrt{k}}\\
    &= \frac{k(2nb-kb-b+2a-2)}{2\sqrt{k}}.
\end{align*} 
Thus, 
\begin{align*}
    \Vol(\Pyr(I,F))
    &=\frac{1}{n}\cdot \frac{(2nb-kb-b+2a-2)}{2}V^{a,b}_{n-k} \cdot b^{k-1}k^{k-1}.
\end{align*}
By the definition of the sequence, $V^{a,b}_0=1$.
Note that $V^{a,b}_1 = a-1$, as the 1-dimensional volume (in this case length) of the convex hull of colinear points $1, \ldots, a$ in $\R$ is of length $a-1$. Thus for $n \geq 2$,
\begin{align*}
    V^{a,b}_n &= \sum_{F}\Vol(\Pyr(I,F))\\
    &= \sum_{k=1}^{n}\binom{n}{k}\frac{1}{n}\cdot \frac{(2nb-kb-b+2a-2)}{2}V^{a,b}_{n-k} b^{k-1}k^{k-1}\\
    &= \frac{1}{n}\sum_{n-k=1}^{n}\binom{n}{n-k} \frac{(2nb-(n-k)b-b+2a-2)}{2}V^{a,b}_{k} b^{n-k-1}(n-k)^{n-k-1}\\
    &= \frac{1}{n}\sum_{k=0}^{n-1}\binom{n}{k} \frac{(b(n-k))^{n-k-1}(nb+kb-b+2a-2)}{2}V^{a,b}_{k}.
\end{align*}
\end{proof}

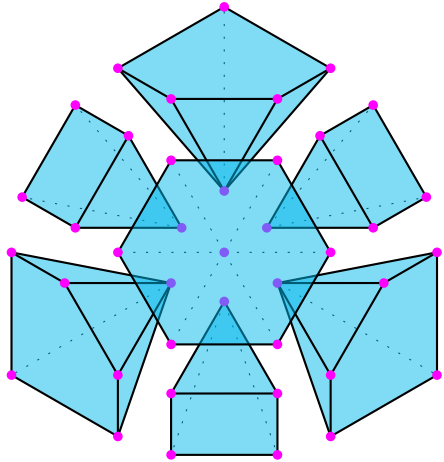
\begin{figure}[ht]
    \centering{
\begin{tikzpicture}%
	[x={(-0.707031cm, -0.408259cm)},
	y={(0.707183cm, -0.408200cm)},
	z={(0.000025cm, 0.816516cm)},
	scale=1.000000,
	back/.style={loosely dotted, thin},
	edge/.style={color=black, thick},
	facet/.style={fill=andresblue,fill opacity=0.500000},
	vertex/.style={inner sep=1pt,circle,draw=andrespink,fill=andrespink,thick}]
%% Coordinate of the vertices:
%%
\coordinate (1.50000, 0.50000, 0.50000) at (1.50000, 0.50000, 0.50000);
\coordinate (4.50000, 0.50000, 0.50000) at (4.50000, 0.50000, 0.50000);
\coordinate (4.50000, 0.50000, 2.50000) at (4.50000, 0.50000, 2.50000);
\coordinate (4.50000, 1.50000, 2.50000) at (4.50000, 1.50000, 2.50000);
\coordinate (4.50000, 2.50000, 0.50000) at (4.50000, 2.50000, 0.50000);
\coordinate (4.50000, 2.50000, 1.50000) at (4.50000, 2.50000, 1.50000);
%%
%%
%% Drawing edges in the back
%%
\draw[edge,back] (1.50000, 0.50000, 0.50000) -- (4.50000, 0.50000, 0.50000);
%%
%%
%% Drawing vertices in the back
%%
%%
%%
%% Drawing the facets
%%
\fill[facet] (4.50000, 2.50000, 1.50000) -- (4.50000, 1.50000, 2.50000) -- (4.50000, 0.50000, 2.50000) -- (4.50000, 0.50000, 0.50000) -- (4.50000, 2.50000, 0.50000) -- cycle {};
\fill[facet] (4.50000, 1.50000, 2.50000) -- (1.50000, 0.50000, 0.50000) -- (4.50000, 0.50000, 2.50000) -- cycle {};
\fill[facet] (4.50000, 2.50000, 1.50000) -- (1.50000, 0.50000, 0.50000) -- (4.50000, 2.50000, 0.50000) -- cycle {};
\fill[facet] (4.50000, 2.50000, 1.50000) -- (1.50000, 0.50000, 0.50000) -- (4.50000, 1.50000, 2.50000) -- cycle {};
%%
%%
%% Drawing edges in the front
%%
\draw[edge] (1.50000, 0.50000, 0.50000) -- (4.50000, 0.50000, 2.50000);
\draw[edge] (1.50000, 0.50000, 0.50000) -- (4.50000, 1.50000, 2.50000);
\draw[edge] (1.50000, 0.50000, 0.50000) -- (4.50000, 2.50000, 0.50000);
\draw[edge] (1.50000, 0.50000, 0.50000) -- (4.50000, 2.50000, 1.50000);
\draw[edge] (4.50000, 0.50000, 0.50000) -- (4.50000, 0.50000, 2.50000);
\draw[edge] (4.50000, 0.50000, 0.50000) -- (4.50000, 2.50000, 0.50000);
\draw[edge] (4.50000, 0.50000, 2.50000) -- (4.50000, 1.50000, 2.50000);
\draw[edge] (4.50000, 1.50000, 2.50000) -- (4.50000, 2.50000, 1.50000);
\draw[edge] (4.50000, 2.50000, 0.50000) -- (4.50000, 2.50000, 1.50000);
%%
%%
%% Drawing the vertices in the front
%%
\node[vertex] at (1.50000, 0.50000, 0.50000)     {};
\node[vertex] at (4.50000, 0.50000, 0.50000)     {};
\node[vertex] at (4.50000, 0.50000, 2.50000)     {};
\node[vertex] at (4.50000, 1.50000, 2.50000)     {};
\node[vertex] at (4.50000, 2.50000, 0.50000)     {};
\node[vertex] at (4.50000, 2.50000, 1.50000)     {};
%%
%%
%% Coordinate of the vertices:
%%
\coordinate (0.50000, 1.50000, 0.50000) at (0.50000, 1.50000, 0.50000);
\coordinate (0.50000, 4.50000, 0.50000) at (0.50000, 4.50000, 0.50000);
\coordinate (0.50000, 4.50000, 2.50000) at (0.50000, 4.50000, 2.50000);
\coordinate (1.50000, 4.50000, 2.50000) at (1.50000, 4.50000, 2.50000);
\coordinate (2.50000, 4.50000, 0.50000) at (2.50000, 4.50000, 0.50000);
\coordinate (2.50000, 4.50000, 1.50000) at (2.50000, 4.50000, 1.50000);
%%
%%
%% Drawing edges in the back
%%
\draw[edge,back] (0.50000, 1.50000, 0.50000) -- (0.50000, 4.50000, 0.50000);
%%
%%
%% Drawing vertices in the back
%%
%%
%%
%% Drawing the facets
%%
\fill[facet] (2.50000, 4.50000, 1.50000) -- (1.50000, 4.50000, 2.50000) -- (0.50000, 4.50000, 2.50000) -- (0.50000, 4.50000, 0.50000) -- (2.50000, 4.50000, 0.50000) -- cycle {};
\fill[facet] (1.50000, 4.50000, 2.50000) -- (0.50000, 1.50000, 0.50000) -- (0.50000, 4.50000, 2.50000) -- cycle {};
\fill[facet] (2.50000, 4.50000, 1.50000) -- (0.50000, 1.50000, 0.50000) -- (2.50000, 4.50000, 0.50000) -- cycle {};
\fill[facet] (2.50000, 4.50000, 1.50000) -- (0.50000, 1.50000, 0.50000) -- (1.50000, 4.50000, 2.50000) -- cycle {};
%%
%%
%% Drawing edges in the front
%%
\draw[edge] (0.50000, 1.50000, 0.50000) -- (0.50000, 4.50000, 2.50000);
\draw[edge] (0.50000, 1.50000, 0.50000) -- (1.50000, 4.50000, 2.50000);
\draw[edge] (0.50000, 1.50000, 0.50000) -- (2.50000, 4.50000, 0.50000);
\draw[edge] (0.50000, 1.50000, 0.50000) -- (2.50000, 4.50000, 1.50000);
\draw[edge] (0.50000, 4.50000, 0.50000) -- (0.50000, 4.50000, 2.50000);
\draw[edge] (0.50000, 4.50000, 0.50000) -- (2.50000, 4.50000, 0.50000);
\draw[edge] (0.50000, 4.50000, 2.50000) -- (1.50000, 4.50000, 2.50000);
\draw[edge] (1.50000, 4.50000, 2.50000) -- (2.50000, 4.50000, 1.50000);
\draw[edge] (2.50000, 4.50000, 0.50000) -- (2.50000, 4.50000, 1.50000);
%%
%%
%% Drawing the vertices in the front
%%
\node[vertex] at (0.50000, 1.50000, 0.50000)     {};
\node[vertex] at (0.50000, 4.50000, 0.50000)     {};
\node[vertex] at (0.50000, 4.50000, 2.50000)     {};
\node[vertex] at (1.50000, 4.50000, 2.50000)     {};
\node[vertex] at (2.50000, 4.50000, 0.50000)     {};
\node[vertex] at (2.50000, 4.50000, 1.50000)     {};
%%
%%
%% Coordinate of the vertices:
%%
\coordinate (0.50000, 0.50000, 1.50000) at (0.50000, 0.50000, 1.50000);
\coordinate (0.50000, 0.50000, 4.50000) at (0.50000, 0.50000, 4.50000);
\coordinate (0.50000, 2.50000, 4.50000) at (0.50000, 2.50000, 4.50000);
\coordinate (1.50000, 2.50000, 4.50000) at (1.50000, 2.50000, 4.50000);
\coordinate (2.50000, 0.50000, 4.50000) at (2.50000, 0.50000, 4.50000);
\coordinate (2.50000, 1.50000, 4.50000) at (2.50000, 1.50000, 4.50000);
%%
%%
%% Drawing edges in the back
%%
\draw[edge,back] (0.50000, 0.50000, 1.50000) -- (0.50000, 0.50000, 4.50000);
%%
%%
%% Drawing vertices in the back
%%
%%
%%
%% Drawing the facets
%%
\fill[facet] (2.50000, 1.50000, 4.50000) -- (1.50000, 2.50000, 4.50000) -- (0.50000, 2.50000, 4.50000) -- (0.50000, 0.50000, 4.50000) -- (2.50000, 0.50000, 4.50000) -- cycle {};
\fill[facet] (1.50000, 2.50000, 4.50000) -- (0.50000, 0.50000, 1.50000) -- (0.50000, 2.50000, 4.50000) -- cycle {};
\fill[facet] (2.50000, 1.50000, 4.50000) -- (0.50000, 0.50000, 1.50000) -- (2.50000, 0.50000, 4.50000) -- cycle {};
\fill[facet] (2.50000, 1.50000, 4.50000) -- (0.50000, 0.50000, 1.50000) -- (1.50000, 2.50000, 4.50000) -- cycle {};
%%
%%
%% Drawing edges in the front
%%
\draw[edge] (0.50000, 0.50000, 1.50000) -- (0.50000, 2.50000, 4.50000);
\draw[edge] (0.50000, 0.50000, 1.50000) -- (1.50000, 2.50000, 4.50000);
\draw[edge] (0.50000, 0.50000, 1.50000) -- (2.50000, 0.50000, 4.50000);
\draw[edge] (0.50000, 0.50000, 1.50000) -- (2.50000, 1.50000, 4.50000);
\draw[edge] (0.50000, 0.50000, 4.50000) -- (0.50000, 2.50000, 4.50000);
\draw[edge] (0.50000, 0.50000, 4.50000) -- (2.50000, 0.50000, 4.50000);
\draw[edge] (0.50000, 2.50000, 4.50000) -- (1.50000, 2.50000, 4.50000);
\draw[edge] (1.50000, 2.50000, 4.50000) -- (2.50000, 1.50000, 4.50000);
\draw[edge] (2.50000, 0.50000, 4.50000) -- (2.50000, 1.50000, 4.50000);
%%
%%
%% Drawing the vertices in the front
%%
\node[vertex] at (0.50000, 0.50000, 1.50000)     {};
\node[vertex] at (0.50000, 0.50000, 4.50000)     {};
\node[vertex] at (0.50000, 2.50000, 4.50000)     {};
\node[vertex] at (1.50000, 2.50000, 4.50000)     {};
\node[vertex] at (2.50000, 0.50000, 4.50000)     {};
\node[vertex] at (2.50000, 1.50000, 4.50000)     {};
%%
%%

%% Coordinate of the vertices:
%%
\coordinate (1.20000, 0.40000, 1.20000) at (1.20000, 0.40000, 1.20000);
\coordinate (3.20000, 0.40000, 4.20000) at (3.20000, 0.40000, 4.20000);
\coordinate (3.20000, 1.40000, 4.20000) at (3.20000, 1.40000, 4.20000);
\coordinate (4.20000, 0.40000, 3.20000) at (4.20000, 0.40000, 3.20000);
\coordinate (4.20000, 1.40000, 3.20000) at (4.20000, 1.40000, 3.20000);
%%
%%
%% Drawing edges in the back
%%
\draw[edge,back] (1.20000, 0.40000, 1.20000) -- (3.20000, 0.40000, 4.20000);
\draw[edge,back] (1.20000, 0.40000, 1.20000) -- (4.20000, 0.40000, 3.20000);
%%
%%
%% Drawing vertices in the back
%%
%%
%%
%% Drawing the facets
%%
\fill[facet] (4.20000, 1.40000, 3.20000) -- (3.20000, 1.40000, 4.20000) -- (3.20000, 0.40000, 4.20000) -- (4.20000, 0.40000, 3.20000) -- cycle {};
\fill[facet] (4.20000, 1.40000, 3.20000) -- (1.20000, 0.40000, 1.20000) -- (3.20000, 1.40000, 4.20000) -- cycle {};
%%
%%
%% Drawing edges in the front
%%
\draw[edge] (1.20000, 0.40000, 1.20000) -- (3.20000, 1.40000, 4.20000);
\draw[edge] (1.20000, 0.40000, 1.20000) -- (4.20000, 1.40000, 3.20000);
\draw[edge] (3.20000, 0.40000, 4.20000) -- (3.20000, 1.40000, 4.20000);
\draw[edge] (3.20000, 0.40000, 4.20000) -- (4.20000, 0.40000, 3.20000);
\draw[edge] (3.20000, 1.40000, 4.20000) -- (4.20000, 1.40000, 3.20000);
\draw[edge] (4.20000, 0.40000, 3.20000) -- (4.20000, 1.40000, 3.20000);
%%
%%
%% Drawing the vertices in the front
%%
\node[vertex] at (1.20000, 0.40000, 1.20000)     {};
\node[vertex] at (3.20000, 0.40000, 4.20000)     {};
\node[vertex] at (3.20000, 1.40000, 4.20000)     {};
\node[vertex] at (4.20000, 0.40000, 3.20000)     {};
\node[vertex] at (4.20000, 1.40000, 3.20000)     {};
%%
%%

%% Coordinate of the vertices:
%%
\coordinate (1.20000, 1.20000, 0.40000) at (1.20000, 1.20000, 0.40000);
\coordinate (3.20000, 4.20000, 0.40000) at (3.20000, 4.20000, 0.40000);
\coordinate (3.20000, 4.20000, 1.40000) at (3.20000, 4.20000, 1.40000);
\coordinate (4.20000, 3.20000, 0.40000) at (4.20000, 3.20000, 0.40000);
\coordinate (4.20000, 3.20000, 1.40000) at (4.20000, 3.20000, 1.40000);
%%
%%
%% Drawing edges in the back
%%
\draw[edge,back] (1.20000, 1.20000, 0.40000) -- (3.20000, 4.20000, 0.40000);
\draw[edge,back] (1.20000, 1.20000, 0.40000) -- (4.20000, 3.20000, 0.40000);
%%
%%
%% Drawing vertices in the back
%%
%%
%%
%% Drawing the facets
%%
\fill[facet] (4.20000, 3.20000, 1.40000) -- (3.20000, 4.20000, 1.40000) -- (3.20000, 4.20000, 0.40000) -- (4.20000, 3.20000, 0.40000) -- cycle {};
\fill[facet] (4.20000, 3.20000, 1.40000) -- (1.20000, 1.20000, 0.40000) -- (3.20000, 4.20000, 1.40000) -- cycle {};
%%
%%
%% Drawing edges in the front
%%
\draw[edge] (1.20000, 1.20000, 0.40000) -- (3.20000, 4.20000, 1.40000);
\draw[edge] (1.20000, 1.20000, 0.40000) -- (4.20000, 3.20000, 1.40000);
\draw[edge] (3.20000, 4.20000, 0.40000) -- (3.20000, 4.20000, 1.40000);
\draw[edge] (3.20000, 4.20000, 0.40000) -- (4.20000, 3.20000, 0.40000);
\draw[edge] (3.20000, 4.20000, 1.40000) -- (4.20000, 3.20000, 1.40000);
\draw[edge] (4.20000, 3.20000, 0.40000) -- (4.20000, 3.20000, 1.40000);
%%
%%
%% Drawing the vertices in the front
%%
\node[vertex] at (1.20000, 1.20000, 0.40000)     {};
\node[vertex] at (3.20000, 4.20000, 0.40000)     {};
\node[vertex] at (3.20000, 4.20000, 1.40000)     {};
\node[vertex] at (4.20000, 3.20000, 0.40000)     {};
\node[vertex] at (4.20000, 3.20000, 1.40000)     {};
%%
%%

%% Coordinate of the vertices:
%%
\coordinate (0.40000, 1.20000, 1.20000) at (0.40000, 1.20000, 1.20000);
\coordinate (0.40000, 3.20000, 4.20000) at (0.40000, 3.20000, 4.20000);
\coordinate (0.40000, 4.20000, 3.20000) at (0.40000, 4.20000, 3.20000);
\coordinate (1.40000, 3.20000, 4.20000) at (1.40000, 3.20000, 4.20000);
\coordinate (1.40000, 4.20000, 3.20000) at (1.40000, 4.20000, 3.20000);
%%
%%
%% Drawing edges in the back
%%
\draw[edge,back] (0.40000, 1.20000, 1.20000) -- (0.40000, 3.20000, 4.20000);
\draw[edge,back] (0.40000, 1.20000, 1.20000) -- (0.40000, 4.20000, 3.20000);
%%
%%
%% Drawing vertices in the back
%%
%%

%%
%% Drawing the facets
%%
\fill[facet] (1.40000, 4.20000, 3.20000) -- (0.40000, 4.20000, 3.20000) -- (0.40000, 3.20000, 4.20000) -- (1.40000, 3.20000, 4.20000) -- cycle {};
\fill[facet] (1.40000, 4.20000, 3.20000) -- (0.40000, 1.20000, 1.20000) -- (1.40000, 3.20000, 4.20000) -- cycle {};
%%
%%
%% Drawing edges in the front
%%
\draw[edge] (0.40000, 1.20000, 1.20000) -- (1.40000, 3.20000, 4.20000);
\draw[edge] (0.40000, 1.20000, 1.20000) -- (1.40000, 4.20000, 3.20000);
\draw[edge] (0.40000, 3.20000, 4.20000) -- (0.40000, 4.20000, 3.20000);
\draw[edge] (0.40000, 3.20000, 4.20000) -- (1.40000, 3.20000, 4.20000);
\draw[edge] (0.40000, 4.20000, 3.20000) -- (1.40000, 4.20000, 3.20000);
\draw[edge] (1.40000, 3.20000, 4.20000) -- (1.40000, 4.20000, 3.20000);
%%
%%
%% Drawing the vertices in the front
%%
\node[vertex] at (0.40000, 1.20000, 1.20000)     {};
\node[vertex] at (0.40000, 3.20000, 4.20000)     {};
\node[vertex] at (0.40000, 4.20000, 3.20000)     {};
\node[vertex] at (1.40000, 3.20000, 4.20000)     {};
\node[vertex] at (1.40000, 4.20000, 3.20000)     {};
%%
%%

%% Coordinate of the vertices:
%%
\coordinate (1.00000, 1.00000, 1.00000) at (1.00000, 1.00000, 1.00000);
\coordinate (2.00000, 3.00000, 4.00000) at (2.00000, 3.00000, 4.00000);
\coordinate (2.00000, 4.00000, 3.00000) at (2.00000, 4.00000, 3.00000);
\coordinate (3.00000, 2.00000, 4.00000) at (3.00000, 2.00000, 4.00000);
\coordinate (3.00000, 4.00000, 2.00000) at (3.00000, 4.00000, 2.00000);
\coordinate (4.00000, 2.00000, 3.00000) at (4.00000, 2.00000, 3.00000);
\coordinate (4.00000, 3.00000, 2.00000) at (4.00000, 3.00000, 2.00000);
%%
%%
%% Drawing edges in the back
%%
\draw[edge,back] (1.00000, 1.00000, 1.00000) -- (2.00000, 3.00000, 4.00000);
\draw[edge,back] (1.00000, 1.00000, 1.00000) -- (2.00000, 4.00000, 3.00000);
\draw[edge,back] (1.00000, 1.00000, 1.00000) -- (3.00000, 2.00000, 4.00000);
\draw[edge,back] (1.00000, 1.00000, 1.00000) -- (3.00000, 4.00000, 2.00000);
\draw[edge,back] (1.00000, 1.00000, 1.00000) -- (4.00000, 2.00000, 3.00000);
\draw[edge,back] (1.00000, 1.00000, 1.00000) -- (4.00000, 3.00000, 2.00000);
%%
%%
%% Drawing vertices in the back
%%
\node[vertex] at (1.00000, 1.00000, 1.00000)     {};
%%
%%
%% Drawing the facets
%%
\fill[facet] (4.00000, 3.00000, 2.00000) -- (3.00000, 4.00000, 2.00000) -- (2.00000, 4.00000, 3.00000) -- (2.00000, 3.00000, 4.00000) -- (3.00000, 2.00000, 4.00000) -- (4.00000, 2.00000, 3.00000) -- cycle {};
%%
%%
%% Drawing edges in the front
%%
\draw[edge] (2.00000, 3.00000, 4.00000) -- (2.00000, 4.00000, 3.00000);
\draw[edge] (2.00000, 3.00000, 4.00000) -- (3.00000, 2.00000, 4.00000);
\draw[edge] (2.00000, 4.00000, 3.00000) -- (3.00000, 4.00000, 2.00000);
\draw[edge] (3.00000, 2.00000, 4.00000) -- (4.00000, 2.00000, 3.00000);
\draw[edge] (3.00000, 4.00000, 2.00000) -- (4.00000, 3.00000, 2.00000);
\draw[edge] (4.00000, 2.00000, 3.00000) -- (4.00000, 3.00000, 2.00000);
%%
%%
%% Drawing the vertices in the front
%%
\node[vertex] at (2.00000, 3.00000, 4.00000)     {};
\node[vertex] at (2.00000, 4.00000, 3.00000)     {};
\node[vertex] at (3.00000, 2.00000, 4.00000)     {};
\node[vertex] at (3.00000, 4.00000, 2.00000)     {};
\node[vertex] at (4.00000, 2.00000, 3.00000)     {};
\node[vertex] at (4.00000, 3.00000, 2.00000)     {};
\end{tikzpicture}
}
    \caption{The $\mathbf{x}$-parking function polytope $\mathfrak{X}_3(2,1)$ where, as in the proof of Theorem \ref{thm:generalized_recursive_volume}, it has been split up into pyramids with $I = (1,1,1)$ as the vertex over each face $F$ which does not contain $I$. By summing the volume of all pyramids, we obtain the volume of the whole polytope.}
    \label{fig:proof}
\end{figure}

\begin{reptheorem}{thm:generalized_closed_form_volume}
For any positive integers $a,b,n$, the normalized volume $\nVol(\mathfrak{X}_{n}(a,b))$ is given by
\begin{equation}
    \nVol(\mathfrak{X}_{n}(a,b)) = -n!\left(\frac{b}{2}\right)^n \sum_{i=0}^n \binom{n}{i} (2i-3)!! \left(2n-1 + \frac{2a-2}{b}\right)^{n-i}. \nonumber
\end{equation}
\end{reptheorem}

To prove this, we use the following lemma.

\begin{lemma}\label{lemma:exp-g_b}
The exponential generating function for $\{(bn)^{n-1}\}_{n \geq 1}$
\begin{equation}
    g_b(x) := \sum_{n \geq 1} \frac{(bn)^{n-1}}{n!} x^n 
\end{equation}
satisfies
\begin{equation}\label{eq:b-Lambert}
    g_b(x) = -\frac{1}{b}W_0(-bx), 
\end{equation}
where $W_0$ denotes the principal branch of the Lambert $W$ function, and 
\begin{equation}\label{eq:g_b(x)}
    g_b(x) = xe^{bg_b(x)}.
\end{equation}
\end{lemma}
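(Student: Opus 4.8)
The plan is to reduce everything to the classical \emph{tree function} $T(x) := \sum_{n \geq 1} \frac{n^{n-1}}{n!} x^n$ (the exponential generating function for rooted labelled trees) by a simple rescaling. Writing $(bn)^{n-1} = b^{n-1} n^{n-1}$ and pulling the powers of $b$ through the sum, one sees immediately that
\[
g_b(x) = \sum_{n \geq 1} \frac{b^{n-1} n^{n-1}}{n!}\, x^n = \frac{1}{b} \sum_{n \geq 1} \frac{n^{n-1}}{n!}\,(bx)^n = \frac{1}{b}\, T(bx),
\]
valid as an identity of formal power series and, analytically, on the disk $|x| < 1/(be)$ since $T$ has radius of convergence $1/e$. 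So the whole lemma follows once we recall the two standard facts about $T$: (a) $T(x) = x e^{T(x)}$, and (b) $T(x) = -W_0(-x)$.

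For (a) I would invoke Lagrange inversion: the equation $y = x e^{y}$ has a unique formal power series solution with $y(0) = 0$, and Lagrange inversion gives $[x^n] y = \frac{1}{n}[t^{n-1}] e^{nt} = \frac{1}{n}\cdot \frac{n^{n-1}}{(n-1)!} = \frac{n^{n-1}}{n!}$, so that solution is exactly $T$; hence $T(x) = x e^{T(x)}$. For (b), recall that the principal branch $W_0$ of the Lambert $W$ function is the analytic solution near $0$ of $W_0(z)\, e^{W_0(z)} = z$ with $W_0(0) = 0$. Substituting $z = -x$ and setting $y := -W_0(-x)$ gives $-y\, e^{-y} = -x$, i.e. $y = x e^{y}$ with $y(0) = 0$; by the uniqueness noted above, $y = T(x)$, which is (b).

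Combining these: from $g_b(x) = \frac{1}{b} T(bx)$ and (b) we get $g_b(x) = \frac{1}{b}\bigl({-}W_0(-bx)\bigr) = -\frac{1}{b} W_0(-bx)$, which is \eqref{eq:b-Lambert}; and from $g_b(x) = \frac{1}{b} T(bx)$ together with (a) applied at the point $bx$ we get $g_b(x) = \frac{1}{b}\cdot bx\, e^{T(bx)} = x\, e^{T(bx)} = x\, e^{b g_b(x)}$, which is \eqref{eq:g_b(x)}.

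There is essentially no deep obstacle here; the only points to handle with care are (i) the normalization factor $1/b$ in the scaling identity $g_b(x) = \frac{1}{b} T(bx)$ — it is precisely what forces the functional equation to come out as $g_b = x\, e^{b g_b}$ — and (ii) selecting the correct (principal) branch of $W$, equivalently pinning down the solution of $y = x e^{y}$ by the normalization $y(0) = 0$. If one prefers to avoid Lagrange inversion, fact (a) can instead be obtained by differentiating the series for $g_b$ and matching coefficients directly, but the route above is shorter.
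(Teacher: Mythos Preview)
Your proof is correct and close in spirit to the paper's, but the route differs slightly. The paper works directly with the Lambert function: it quotes the Taylor expansion $W_0(z) = \sum_{n\ge 1} \frac{(-n)^{n-1}}{n!} z^n$ as a known fact, substitutes $z=-bx$ to obtain \eqref{eq:b-Lambert} immediately, and then cites the functional identity $W_0(z) = z e^{-W_0(z)}$ at $z=-bx$ to obtain \eqref{eq:g_b(x)}. You instead introduce the tree function $T$ as an intermediary, prove its functional equation from scratch via Lagrange inversion, and then identify $T$ with $-W_0(-\cdot)$ by uniqueness of the analytic solution to $y = x e^y$ with $y(0)=0$. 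The paper's argument is shorter because it takes the series of $W_0$ as given; yours is more self-contained because you actually derive the coefficients, at the cost of the extra layer through $T$. Either way the content is the same pair of identities, and your scaling step $g_b(x)=\tfrac{1}{b}T(bx)$ is exactly what the paper's substitution $z=-bx$ accomplishes.
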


\begin{proof}
Recall that 
\begin{equation}
    W_0(z) = \sum_{n \geq 1} \frac{(-n)^{n-1}}{n!} z^n, \nonumber
\end{equation}
so substituting $z=-bx$ gives
\begin{equation}\label{eq:W_0(-bx)}
    W_0(-bx) = \sum_{n \geq 1} \frac{(-n)^{n-1}}{n!} (-bx)^n =  -b\sum_{n \geq 1} \frac{(bn)^{n-1}}{n!} x^n,
\end{equation}
which implies Equation (\ref{eq:b-Lambert}). A well-known property of the Lambert $W$ function is that 
$W_0(z) = ze^{-W_0(z)}$.
Substituting $z=-bx$ gives    
$W_0(-bx) = -bxe^{-W_0(-bx)}$,
which by Equation (\ref{eq:b-Lambert}) gives
$-bg_b(x) = -bxe^{bg_b(x)}$,
which implies Equation (\ref{eq:g_b(x)}).
\end{proof}

We also need the following exponential generating function for the (non-normalized) volume of $\mathfrak{X}_{n,a,b}$. This is a generalization of Proposition 4.2 in \cite{AW}.

\begin{proposition}\label{prop:general_exp_gen}
Let $V^{a,b}_n$ denote the Euclidean volume of $\mathfrak{X}_{n}(a,b)$. Let
\begin{equation}
    f_{a,b}(x) := \sum_{n\geq 0} \frac{V^{a,b}_n}{n!}x^n
\end{equation}
be its exponential generating function. Then 
\begin{equation}
    f_{a,b}(x) = e^{\frac{b^2}{2}\int x(g_b'(x))^2 \, dx}e^{(a-1)g_b(x)},
\end{equation}
where $g_b(x)$ is the exponential generating function given in Lemma \ref{lemma:exp-g_b}.
\end{proposition}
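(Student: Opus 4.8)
The plan is to follow and generalize the proof of Proposition 4.2 of \cite{AW}: first translate the recursion of Theorem \ref{thm:generalized_recursive_volume} into an ordinary differential equation for $f_{a,b}$, and then integrate that equation in closed form with the help of the functional equation $g_b(x) = x e^{b g_b(x)}$ from Lemma \ref{lemma:exp-g_b}.

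\textbf{From recursion to ODE.} Multiplying the recursion of Theorem \ref{thm:generalized_recursive_volume} through by $n$, reindexing by $m = n-k$, and expanding $nb + kb - b + 2a - 2 = bm + 2b(n-m) - b + 2a - 2$, one obtains for every $n \ge 1$ an identity of the shape
\[
n V_n^{a,b} \;=\; \sum_{m=1}^{n}\binom{n}{m}\left[\tfrac12 (bm)^m + b(n-m)(bm)^{m-1} + \Bigl(a - \tfrac b2 - 1\Bigr)(bm)^{m-1}\right] V_{n-m}^{a,b},
\]
which one checks also holds at $n = 0,1$ using $V_0^{a,b}=1$ and $V_1^{a,b}=a-1$. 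Each bracketed summand is a binomial convolution, so it becomes a product after passing to exponential generating functions. By Lemma \ref{lemma:exp-g_b} we have $\sum_{m\ge1}\frac{(bm)^{m-1}}{m!}x^m = g_b(x)$, hence $\sum_{m\ge1}\frac{(bm)^m}{m!}x^m = b x g_b'(x)$; also $\{nV_n^{a,b}\}$ has exponential generating function $x f_{a,b}'(x)$, and the factor $n-m$ attached to $V_{n-m}^{a,b}$ turns its convolution into $g_b \cdot b x f_{a,b}'$. Assembling the three terms gives
\[
x f_{a,b}' \;=\; \frac{bx}{2}\,g_b' f_{a,b} \;+\; b x\, g_b\, f_{a,b}' \;+\; \Bigl(a - \frac b2 - 1\Bigr) g_b f_{a,b},
\]
equivalently, after collecting the $f_{a,b}'$ terms,
\[
\frac{f_{a,b}'}{f_{a,b}} \;=\; \frac{\tfrac{bx}{2}\,g_b' + \bigl(a - \tfrac b2 - 1\bigr) g_b}{x\,\bigl(1 - b g_b\bigr)}.
\]

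\textbf{Solving the ODE.} This integration is where the real work lies, and I expect it to be the main obstacle. Differentiating $\ln g_b = \ln x + b g_b$ (from Equation \ref{eq:g_b(x)}) gives $\frac{g_b'}{g_b} = \frac1x + b g_b'$, which is equivalent to the two identities $\frac{1}{x(1-bg_b)} = \frac{g_b'}{g_b}$ and $\frac{x g_b'}{g_b} = 1 + b x g_b'$. Substituting the first into the displayed ODE converts its right-hand side to $\frac{b x (g_b')^2}{2 g_b} + \bigl(a-\tfrac b2 -1\bigr)g_b'$; using the second identity to rewrite $\frac{x(g_b')^2}{g_b} = g_b' + b x (g_b')^2$ then expresses $\frac{f_{a,b}'}{f_{a,b}}$ as an explicit combination of $g_b'$ and $x(g_b')^2$ that is manifestly an exact derivative. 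Integrating from the origin, where $g_b(0)=0$ and $f_{a,b}(0)=V_0^{a,b}=1$ pin down the constant of integration, and then exponentiating, yields the asserted product formula for $f_{a,b}(x)$; uniqueness of the power-series solution of the ODE with value $1$ at $x=0$ guarantees that what we have constructed is indeed $f_{a,b}$. The delicate parts are purely computational: carefully matching the $n$- and $k$-dependent coefficients of the recursion to generating-function operators in the first step, and keeping precise track of the constants when integrating in the second step, so that the exponent comes out in exactly the normalization recorded in the statement.
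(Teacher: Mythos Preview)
Your approach mirrors the paper's exactly: convert the recursion of Theorem~\ref{thm:generalized_recursive_volume} into a first-order ODE for $f_{a,b}$ and integrate. Your ODE
\[
x f_{a,b}' \;=\; \tfrac{bx}{2}\,g_b' f_{a,b} \;+\; b x\, g_b\, f_{a,b}' \;+\; \bigl(a - \tfrac b2 - 1\bigr) g_b\, f_{a,b}
\]
is in fact correct, and more careful than the paper's: equation~\eqref{eq:first} in the paper has $\tfrac12 g_b'$ where it should have $\tfrac{b}{2}g_b'$, as one sees by comparing constant terms (the paper's version forces $f_{a,b}'(0)=a-\tfrac{b+1}{2}$ rather than $V_1^{a,b}=a-1$).

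The gap is in your final paragraph, where you assert that integrating ``yields the asserted product formula.'' Carrying your own simplification through gives
\[
\frac{f_{a,b}'}{f_{a,b}} \;=\; \frac{b}{2}\bigl(g_b' + b x (g_b')^2\bigr) + \bigl(a-\tfrac b2 -1\bigr)g_b' \;=\; \frac{b^2 x}{2}(g_b')^2 + (a-1)\,g_b',
\]
hence $f_{a,b}(x) = \exp\!\bigl(b^2\!\int \tfrac{x}{2}(g_b')^2\,dx\bigr)\,\exp\!\bigl((a-1)g_b\bigr)$. This is \emph{not} the formula stated in the Proposition; the discrepancy is genuine, since the printed formula gives $f_{a,b}'(0)=a-\tfrac{b}{2}-\tfrac12$, contradicting $V_1^{a,b}=a-1$ for $b\neq 1$. (Equivalently, $\int \tfrac{x}{2}(g_b')^2\,dx = -\tfrac{g_b}{2b}-\tfrac{1}{2b^2}\ln(1-bg_b)$, so your formula collapses to the correct closed form $(1-bg_b)^{-1/2}e^{(a-\frac b2-1)g_b}$ reached later in the paper, whereas the printed exponents do not.) In the paper, the slip in \eqref{eq:first} is undone by a compensating factor error in the integration step inside the proof of Theorem~\ref{thm:generalized_closed_form_volume}, so the main theorem survives. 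Your argument is sound through the penultimate step; the only fix needed is to carry out the final integration explicitly and record the exponents $b^2$ and $a-1$ rather than claim agreement with the misprinted ones.
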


\begin{proof}
By Theorem \ref{thm:generalized_recursive_volume}, we have that
\begin{align*}
    n \cdot \frac{V^{a,b}_n}{n!} &= \frac{1}{n!} \sum_{k=0}^{n-1} \binom{n}{k}\frac{(b(n-k))^{n-k-1}(nb+kb-b+2a-2)}{2} V^{a,b}_k\\
    &= \sum_{k=0}^{n-1} \frac{(b(n-k))^{n-k-1}(b(n-k) + 2kb + (2a-b-2))}{2(n-k)!}\frac{V^{a,b}_k}{k!}\\
    &= \sum_{k=0}^{n-1}\frac{b}{2} \frac{b^{n-k-1}(n-k)^{n-k}}{(n-k)!}\frac{V^{a,b}_k}{k!} +  \sum_{k=0}^{n-1}\frac{(b(n-k))^{n-k-1}bk}{(n-k)!}\frac{V^{a,b}_k}{k!}\\
    &+ \sum_{k=0}^{n-1}\frac{2a-b-2}{2} \frac{(b(n-k))^{n-k-1}}{(n-k)!}\frac{V^{a,b}_k}{k!},
\end{align*}
which by summing over all $x^n$ gives
\begin{equation}\label{eq:first}
    f_{a,b}'(x) = \frac{b}{2}g_b'(x)f_{a,b}(x) + bg_b(x)f_{a,b}'(x) + \frac{2a-b-2}{2x}g_b(x)f_{a,b}(x).
\end{equation}
Note that by Equation (\ref{eq:g_b(x)}),
\begin{align*}
    g_b'(x) &= (xe^{bg_b(x)})'\\
    &= e^{bg_b(x)} + bxg_b'(x)e^{bg_b(x)}\\
    &= \frac{g_b(x)}{x} + bg_b'(x)g_b(x),
\end{align*}
which implies that
\begin{equation}\label{eq:star_b}
    xg_b'(x) - g_b(x) = bxg_b'(x)g_b(x),
\end{equation}
and
\begin{equation}\label{eq:star_2}
    1 - bg_b(x) = \frac{g_b(x)}{xg_b'(x)}.
\end{equation}
Thus by Equation (\ref{eq:first}),
\begin{align*}
    f_{a,b}'(x)(1- bg_b(x)) &= \frac{b}{2}g_b'(x)f_{a,b}(x) + \frac{2a-b-2}{2x}g_b(x)f_{a,b}(x)\\
    &= \frac{1}{2x}(bxg_b'(x) + (2a-b-2)g_b(x))f_{a,b}(x)\\
    &= \frac{1}{2x}(b^2xg_b'(x)g_b(x) + (2a-2)g_b(x))f_{a,b}(x),
\end{align*}
where the last line follows by Equation (\ref{eq:star_b}). Then by dividing both sides by Equation (\ref{eq:star_2}), we get
\begin{align*}
    f_{a,b}'(x) &= \frac{xg_b'(x)}{g_b(x)}\frac{1}{2x}(b^2xg_b'(x)g_b(x) + (2a-2)g_b(x))f_{a,b}(x)\\
    &= \left( \frac{b^2}{2} x(g_b'(x))^2 + (a-1)g_b'(x) \right) f_{a,b}(x).
\end{align*}
This differential equation has solution
\begin{align*}
    f_{a,b}(x) &= e^{\int \frac{b^2}{2}x (g_b'(x))^2 + (a-1)g_b'(x)\, dx}\\
    &= e^{\frac{b^2}{2}\int x(g_b'(x))^2 \, dx}e^{(a-1)g_b(x)}.
\end{align*}
\end{proof}

Our proof will also use the following theorem (see for example Chapter 4, Entry 11, \cite{Ram}).

\begin{theorem}[Ramanujan's Master Theorem]\label{thm:RMT} Let $\Gamma(s)$ denote the gamma function. If \begin{equation}\label{eq:RMT_f} f(x) = \sum_{n=0}^\infty \frac{\varphi(n)}{n!}(-x)^n,\end{equation}
then the Mellin transform of $f(x)$ is given by 
\begin{equation}\label{eq:RMT_mellin} \int_0^\infty x^{s-1} f(x) \, dx = \Gamma(s)\varphi(-s).\end{equation}
\end{theorem}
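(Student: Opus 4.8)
The plan is to read Equation (\ref{eq:RMT_mellin}) as the assertion that the Mellin transform of $f$ equals $\Gamma(s)\varphi(-s)$, and to prove it by exhibiting a \emph{Mellin--Barnes} representation of $f$ from which the transform can be read off using the uniqueness (inversion) of the Mellin transform. Concretely, I would fix a vertical line $\operatorname{Re}(s) = c$ with $0 < c < 1$ lying inside the strip where $\varphi(-s)$ is analytic, and study the contour integral
\[ I(x) := \frac{1}{2\pi i}\int_{c-i\infty}^{c+i\infty}\Gamma(s)\,\varphi(-s)\,x^{-s}\,ds. \]
The first step is to show $I(x) = f(x)$ for $x$ near $0$; the second step is to invert.

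For the first step I would close the contour to the left (toward $\operatorname{Re}(s) \to -\infty$), which is legitimate for $0 < x < 1$ since then $|x^{-s}| = x^{-\operatorname{Re}(s)} \to 0$ as $\operatorname{Re}(s) \to -\infty$. The enclosed singularities are exactly the simple poles of $\Gamma(s)$ at $s = 0, -1, -2, \ldots$, where $\operatorname{Res}_{s=-n}\Gamma(s) = (-1)^n/n!$. Hence the residue of the integrand at $s = -n$ is $\tfrac{(-1)^n}{n!}\varphi(n)\,x^{n}$, and summing over $n$ with the orientation of the leftward contour gives
\[ I(x) = \sum_{n=0}^{\infty}\frac{\varphi(n)}{n!}(-x)^n = f(x), \]
which is Equation (\ref{eq:RMT_f}). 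The delicate point here, and the main obstacle, is justifying that the contribution of the closing contour vanishes and that the residue series converges. This is where one imposes Hardy's growth hypothesis on $\varphi$: that $\varphi$ extends to an analytic function on a right half-plane of exponential type strictly less than $\pi$ in the imaginary direction. Stirling's formula gives $|\Gamma(\sigma+it)| \sim \sqrt{2\pi}\,|t|^{\sigma - 1/2}\,e^{-\pi|t|/2}$, so the $e^{-\pi|t|/2}$ decay dominates any admissible growth of $\varphi(-s)$ in $|t|$, forcing the horizontal connecting segments (and the far-left vertical segment) to vanish in the limit and making $\Gamma(s)\varphi(-s)$ absolutely integrable on vertical lines. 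Ramanujan's original formal statement suppresses exactly these conditions, so the entire analytic content lives in making this contour shift rigorous.

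For the second step, once $f(x) = \tfrac{1}{2\pi i}\int_{c-i\infty}^{c+i\infty}\Gamma(s)\varphi(-s)\,x^{-s}\,ds$ is established with $\Gamma(s)\varphi(-s)$ integrable along $\operatorname{Re}(s) = c$, the Mellin inversion theorem applies directly: a function written as the inverse Mellin transform of $\Gamma(s)\varphi(-s)$ has $\Gamma(s)\varphi(-s)$ as its Mellin transform. This yields $\int_0^\infty x^{s-1} f(x)\,dx = \Gamma(s)\varphi(-s)$ on the vertical strip, which is Equation (\ref{eq:RMT_mellin}), and the identity then extends by analytic continuation in $s$ wherever both sides are defined.

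Finally, I would record the conceptual shortcut that explains \emph{why} the gamma function appears and that suffices when $\varphi$ is a moment sequence. If $\varphi(n) = \int_0^\infty \lambda^n \, d\mu(\lambda)$ for some measure $\mu$, then term-by-term summation gives $f(x) = \int_0^\infty e^{-\lambda x}\, d\mu(\lambda)$, and the elementary identity $\int_0^\infty x^{s-1}e^{-\lambda x}\,dx = \Gamma(s)\lambda^{-s}$ together with Fubini's theorem produces $\int_0^\infty x^{s-1} f(x)\,dx = \Gamma(s)\int_0^\infty \lambda^{-s}\, d\mu(\lambda) = \Gamma(s)\varphi(-s)$, where $\varphi(-s) = \int_0^\infty \lambda^{-s}\,d\mu(\lambda)$ is the natural interpolation of the moments to $-s$. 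This viewpoint isolates $\Gamma(s)$ as the Mellin transform of $e^{-x}$, while the contour argument above is what handles general admissible $\varphi$ beyond the completely monotone case. For the purposes of this paper it suffices to invoke the theorem in the form stated in \cite{Ram}.
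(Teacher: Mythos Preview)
The paper does not prove Ramanujan's Master Theorem; it simply states the result and refers the reader to \cite{Ram} (Chapter 4, Entry 11). So there is no ``paper's own proof'' to compare against: the theorem is invoked as a black box from the literature, and your final sentence (``For the purposes of this paper it suffices to invoke the theorem in the form stated in \cite{Ram}'') is in fact exactly what the paper does.

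That said, your sketch is the standard Hardy-style rigorization via a Mellin--Barnes contour, and it is essentially correct in outline. The one point worth flagging is that in the paper's actual application the theorem is used \emph{formally}: the authors set $\varphi(n) = (-1)^n V_n^{a,b}$, apply the identity, and then manipulate divergent-looking expressions like $\Gamma(-n)$ for $n$ a positive integer, cancelling the pole of $\Gamma$ against a corresponding zero coming from a $\Gamma(\ell - n)/\Gamma(-n) = (-n)_\ell$ ratio later in the computation. So neither Hardy's growth hypotheses nor the moment-sequence interpretation you describe is actually verified for this $\varphi$; the paper is really using Ramanujan's theorem as a formal device for guessing the closed form, with the rigorous justification implicitly deferred to the fact that both sides of the final identity are polynomials in $n$ (or can be checked against the recursion). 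Your write-up is a fine account of the analytic theorem, but be aware that the paper's use of it is heuristic rather than a literal invocation of the hypotheses you carefully spell out.
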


Now we are ready to prove our theorem.
\begin{proof}[Proof of Theorem \ref{thm:generalized_closed_form_volume}]
By Ramanujan's Master Theorem, for $\varphi(n) := (-1)^nV^{a,b}_n$,
\begin{equation}
    \frac{1}{\Gamma(s)}\int_0^\infty x^{s-1} f_{a,b}(x) \, dx = \varphi(-s) =(-1)^{-s} V^{a,b}_{-s}, \nonumber
\end{equation}
where $\Gamma(s)$ denotes the gamma function. By taking the limit $s \to -n$, we get 
\begin{equation}
    \lim_{s \to -n} \frac{1}{\Gamma(s)} \int_0^\infty x^{s-1} f_{a,b}(x) \, dx= (-1)^{n} V^{a,b}_{n}. \nonumber
\end{equation}
By Equation (\ref{eq:g_b(x)}), we have that
\begin{equation}\label{eq:newstar_1}
x=g_b(x)e^{-bg_b(x)} \qquad \text{and} \qquad dx = (1-bg_b )e^{-bg_b}\, dg_b.
\end{equation}
By Equations (\ref{eq:star_2}) and (\ref{eq:g_b(x)}), we have that 
\begin{equation}
    (g_b'(x))^2 = \left( \frac{g_b(x)}{x(1-bg_b(x))} \right)^2 = \left(\frac{xe^{bg_b(x)}}{x(1-bg_b(x))}\right)^2 = \left(\frac{e^{bg_b(x)}}{(1-bg_b(x))}\right)^2. \nonumber
\end{equation}
Thus
\begin{align*}
    x (g_b'(x))^2 \, dx &= g_be^{-bg_b} \left(\frac{e^{bg_b}}{(1-bg_b)}\right)^2 (1-bg_b )e^{-bg_b}\, dg_b\nonumber\\
    &= \frac{g_b}{1-bg_b}\, dg_b.
\end{align*}
By Proposition \ref{prop:general_exp_gen}, we have
\begin{align}\label{eq:newstar_2}
    f_{a,b}(x)
    &= e^{\frac{b^2}{2}\int x(g_b'(x))^2 \, dx}e^{(a-1)g_b(x)}\nonumber\\
    &= e^{\frac{b^2}{2}\int \frac{g_b}{1-bg_b}\, dg_b}e^{(a-1)g_b}\nonumber\\
    &= e^{-\frac{b}{2}g_b} e^{-\frac{1}{2}\ln(1-bg_b)}e^{(a-1)g_b}\nonumber\\
    &= \frac{1}{\sqrt{1-bg_b}}e^{(a-\frac{b}{2}-1)g_b}.
\end{align}
Thus by Equations (\ref{eq:newstar_1}) and (\ref{eq:newstar_2}), we have that
\begin{align*}
    (-1)^n V^{a,b}_n &= \lim_{s \to -n} \frac{1}{\Gamma(s)} \int_0^{-\infty} (g_b e^{-bg_b})^{s-1} \frac{e^{(a-\frac{b}{2}-1)g_b}}{\sqrt{1-bg_b}} (1-bg_b )e^{-bg_b}\, dg_b\\
    &= \lim_{s \to -n} \frac{1}{\Gamma(s)} \int_0^{-\infty} g_b^{s-1} \sqrt{1-bg_b} e^{(-sb+a-\frac{b}{2}-1)g_b}\, dg_b
    \intertext{and by replacing $g_b$ by $-t$ and $dg_b$ by $-dt$,}
    &= \lim_{s \to -n} \frac{1}{\Gamma(s)} \int_0^{\infty} (-1)^{s}t^{s-1} \sqrt{1+bt} e^{-(-sb+a-\frac{b}{2}-1)t}\, dt,
\end{align*}
which implies that
\begin{equation}
    V^{a,b}_n = \lim_{s \to -n} \frac{1}{\Gamma(s)} \int_0^\infty t^{s-1}\sqrt{1+bt} e^{-(-sb+a-\frac{b}{2}-1)t}\, dt. \nonumber
\end{equation}
Note that as a formal power series,
\begin{equation}
    \sqrt{1+bt} = \sum_{\ell=0}^\infty \frac{(-1)^\ell (bt)^\ell (-\frac{1}{2})_\ell}{\ell!}, \nonumber
\end{equation}
where $(-\frac{1}{2})_\ell$ is the Pochhammer symbol, defined by $(\lambda)_\ell := \lambda (\lambda + 1) \cdots (\lambda + \ell -1)$, for any positive integer $\ell$, and by $(\lambda)_0:=1$. So,
\begin{align*}
    V^{a,b}_n &= \lim_{s \to -n} \frac{1}{\Gamma(s)}\int_0^\infty t^{s-1} e^{-(-sb+a-\frac{b}{2}-1)t}\sum_{\ell=0}^\infty \frac{(-1)^\ell (bt)^\ell (-\frac{1}{2})_\ell}{\ell!}\, dt\\
    &= \sum_{\ell=0}^\infty \frac{(-1)^\ell b^\ell (-\frac{1}{2})_\ell}{\ell!} \lim_{s \to -n} \frac{1}{\Gamma(s)}\int_0^\infty t^{\ell+s-1} e^{-(-sb+a-\frac{b}{2}-1)t}\, dt.
\end{align*}
Let $u = (-sb+a-\frac{b}{2} - 1)t$ so $du = (-sb+a-\frac{b}{2} - 1)dt$. Then
\begin{align*}
    \lim_{s \to -n} \frac{1}{\Gamma(s)} \int_0^\infty t^{\ell+s-1} e^{-(-sb+a-\frac{b}{2}-1)t}\, dt 
    &= \lim_{s \to -n} \frac{1}{\Gamma(s)} \int_0^\infty \left(\frac{u}{-sb+a-\frac{b}{2}-1}\right)^{\ell+s-1} \frac{e^{-u}}{-sb+a-\frac{b}{2} - 1}\, du\\
    &= \lim_{s \to -n} \frac{1}{\Gamma(s)}\int_0^\infty e^{-u} \frac{u^{\ell+s-1}}{(-sb+a-\frac{b}{2}-1)^{\ell+s}}\\
    &= \lim_{s \to -n} \frac{\Gamma(\ell+s)}{\Gamma(s)}\frac{1}{(-sb+a-\frac{b}{2}-1)^{\ell+s}}\\
    &= \lim_{s \to -n} \frac{(s)_{\ell}}{(-sb+a-\frac{b}{2}-1)^{\ell+s}}\\
    &= \frac{(-n)_{\ell}}{(nb+a-\frac{b}{2}-1)^{\ell-n}}.
\end{align*}
Thus,
\begin{align*}
    V^{a,b}_n
    &= \sum_{\ell=0}^\infty \frac{(-1)^\ell b^\ell (-\frac{1}{2})_\ell (-n)_\ell}{\ell! (nb+a-\frac{b}{2}-1)^{\ell-n}}\\
    &= (nb+a - \frac{b}{2} -1)^n \sum_{\ell=0}^\infty \frac{(-\frac{1}{2})_\ell (-n)_\ell}{\ell!}\left( \frac{-1}{n+\frac{a}{b}-\frac{1}{2}-\frac{1}{b}}\right)^\ell,
\end{align*}
which is a multiple of a Poisson-Charlier polynomial. See for example \cite{OE} for the following facts about the Poisson-Charlier polynomial. 
\begin{align*}
    C_n\left(\frac{1}{2}; n + \frac{a}{b} - \frac{1}{2} - \frac{1}{b}\right) 
    &= \sum_{i=0}^n(-1)^i \binom{n}{i}\binom{\frac{1}{2}}{i}i!\left(n + \frac{a}{b} - \frac{1}{2} - \frac{1}{b}\right)^{-i}\\
    &= {}_2F_0\left(-n; -\frac{1}2{}; - ; \frac{-1}{n + \frac{a}{b} - \frac{1}{2} - \frac{1}{b}}\right)\\
    &= \sum_{\ell=0}^\infty \frac{ (-\frac{1}{2})_\ell(-n)_\ell}{\ell!} \left( \frac{-1}{n + \frac{a}{b} - \frac{1}{2} - \frac{1}{b}}\right)^\ell.
\end{align*}
Thus,
\begingroup
\allowdisplaybreaks
\begin{align*}
    V^{a,b}_n &= (nb +a - \frac{b}{2}-1)^n C_n \left(\frac{1}{2}; n + \frac{a}{b} - \frac{1}{2} - \frac{1}{b}\right)\\
    &= b^n\left(n +\frac{a}{b} - \frac{1}{2}-\frac{1}{b}\right)^n  \sum_{i=0}^n(-1)^i \binom{n}{i}\binom{\frac{1}{2}}{i}i!\left(n + \frac{a}{b} - \frac{1}{2} - \frac{1}{b}\right)^{-i}\\
    &= b^n \sum_{i=0}^n(-1)^i \binom{n}{i}\binom{2i}{i} \frac{(-1)^{i+1}}{2^{2i}(2i-1)}i!\left(n + \frac{a}{b} - \frac{1}{2} - \frac{1}{b}\right)^{n-i}\\
    &= -b^n \sum_{i=0}^n \binom{n}{i}\frac{(2i)!}{i!2^{i}} \frac{1}{2^i(2i-1)}\left(n + \frac{a}{b} - \frac{1}{2} - \frac{1}{b}\right)^{n-i}\\
    &= -b^n \sum_{i=0}^n \binom{n}{i}(2i-1)!!\frac{1}{2^i(2i-1)}2^{i-n}\left(2n - 1 + \frac{2a-2}{b}\right)^{n-i}\\
    &= -\left(\frac{b}{2}\right)^n \sum_{i=0}^n \binom{n}{i}(2i-3)!!\left(2n - 1 + \frac{2a-2}{b}\right)^{n-i},
\end{align*}
\endgroup
and to get the normalized volume, we simply multiply by $n!$, which gives the desired formula.
\end{proof}

As a corollary, we prove the following conjecture of Behrend et al. \cite{BCC}.\footnote{This appeared as a conjecture in their first arXiv preprint. Since the appearance of our paper, they have provided an alternative proof.}

\begin{corollary}[Conjecture 4.5 and Remark 4.6, \cite{BCC}]\label{cor:conjecture}
For any $n$ and $p$, with $p \geq n-1$, 
\begin{equation}
    \nVol(\mathcal{P}(n,p)) = n! \sum_{k=0}^n \binom{n}{k}\frac{c_k}{2^k}p^{n-k},
\end{equation}
where the sequence $c_k$ satisfies $c_k = 2(k-1)(c_{k-1}-c_{k-2})$ and has exponential generating function $\sqrt{1-2x}e^x$. Equivalently,
\begin{equation}\label{eq:red_conjecture}
    \nVol(\mathcal{P}(n,p)) = - \frac{n!}{2^n}\sum_{i=0}^n \binom{n}{i}(2i-3)!!(2p+1)^{n-i}.
\end{equation}
\end{corollary}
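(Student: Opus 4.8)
The plan is to obtain Corollary~\ref{cor:conjecture} as an essentially immediate consequence of Theorem~\ref{thm:generalized_closed_form_volume} combined with the integral equivalence recorded in Proposition~\ref{prop:x-pfp_pp}, and then to reconcile the two displayed formulas in the statement. Since an affine map whose restriction to an integral polytope preserves the lattice is unimodular, integrally equivalent polytopes have the same normalized volume; so it suffices to establish Equation~\eqref{eq:red_conjecture} first, and afterwards show that the $c_k$-expression agrees with it.

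For $n \geq 2$ and $p \geq n-1$, Proposition~\ref{prop:x-pfp_pp} says $\mathcal{P}(n,p)$ is integrally equivalent to $\mathcal{X}_n(a,1)$ with $a = p-n+2$ (and the condition $a \geq 1$ is exactly $p \geq n-1$), so $\nVol(\mathcal{P}(n,p)) = \nVol(\mathcal{X}_n(p-n+2,1))$. Substituting $b = 1$ and $a = p-n+2$ into Theorem~\ref{thm:generalized_closed_form_volume}, the exponent base simplifies:
\[ 2n-1 + \frac{2a-2}{b} \;=\; 2n-1 + 2(p-n+2) - 2 \;=\; 2p+1, \]
so the formula becomes exactly $-\dfrac{n!}{2^n}\sum_{i=0}^n \binom{n}{i}(2i-3)!!(2p+1)^{n-i}$, which is \eqref{eq:red_conjecture}. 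The two remaining cases are checked by hand: for $n=0$ the polytope $\mathcal{P}(0,p)$ is a point with $\nVol = 1$, matching $-\binom{0}{0}(-3)!!(2p+1)^0 = 1$ under the convention $(-3)!! = -1$; for $n=1$ and $p \geq 0$, $\mathcal{P}(1,p)$ is the segment $[0,p]$ with $\nVol = p$, matching $-\tfrac12\big((-3)!!(2p+1) + (-1)!!\big) = -\tfrac12(-(2p+1)+1) = p$ with $(-1)!! = 1$.

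To finish, I would show that, for every $n \geq 0$,
\[ -\frac{1}{2^n}\sum_{i=0}^n \binom{n}{i}(2i-3)!!(2p+1)^{n-i} \;=\; \sum_{k=0}^n\binom{n}{k}\frac{c_k}{2^k}p^{n-k}, \]
by comparing exponential generating functions (in an auxiliary variable $t$) of the two sides viewed as sequences in $n$. Using the binomial expansion $\sqrt{1-2x} = \sum_{i\geq 0}\binom{1/2}{i}(-2x)^i = \sum_{i\geq 0}\dfrac{-(2i-3)!!}{i!}x^i$ --- the same evaluation of $\binom{1/2}{i}$ already used inside the proof of Theorem~\ref{thm:generalized_closed_form_volume} --- and writing $2p+1 = 2(p+\tfrac12)$, a routine binomial-convolution computation shows the left side has exponential generating function $\sqrt{1-t}\,e^{(p+\frac12)t}$. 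On the other hand, if $c_k$ has exponential generating function $C(x) = \sqrt{1-2x}\,e^{x}$, then the right side has exponential generating function $C(t/2)\,e^{pt} = \sqrt{1-t}\,e^{t/2}\,e^{pt} = \sqrt{1-t}\,e^{(p+\frac12)t}$. The two coincide, so the identity holds and the corollary follows. There is no genuinely new difficulty beyond Theorem~\ref{thm:generalized_closed_form_volume}; the only points needing care --- and hence the only ``obstacles'' --- are restricting to $n \geq 2$ before invoking Proposition~\ref{prop:x-pfp_pp} (handling $n \in \{0,1\}$ separately) and keeping the double-factorial conventions $(-1)!! = 1$, $(-3)!! = -1$ consistent throughout.
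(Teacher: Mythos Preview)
Your proof is correct and follows essentially the same route as the paper: invoke Proposition~\ref{prop:x-pfp_pp} to identify $\mathcal{P}(n,p)$ with $\mathcal{X}_n(p-n+2,1)$, then substitute into Theorem~\ref{thm:generalized_closed_form_volume}. The only difference is that the paper simply cites \cite{BCC} for the equivalence of the two displayed formulas, whereas you supply a short direct argument via exponential generating functions; you are also more careful in treating $n\in\{0,1\}$ separately, which the paper leaves implicit.
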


\begin{proof}
The two statements are shown to be equivalent in \cite{BCC}, so we prove the latter.
By Proposition \ref{prop:x-pfp_pp}, for $p \geq n-1$, $\nVol(\mathcal{P}(n,p) = \nVol(\mathfrak{X}_{n}(p-n+2,1))$. 
Then by Theorem \ref{thm:generalized_closed_form_volume},
\begin{align*}
    \nVol(\mathfrak{X}_{n,p-n+2,1}) &= -n!\left(\frac{1}{2}\right)^n \sum_{i=0}^n \binom{n}{i} (2i-3)!! \left(2n-1 + 2(p-n+2)-2\right)^{n-i}\\
    &= -\frac{n!}{2^n} \sum_{i=0}^n \binom{n}{i} (2i-3)!! (2p+1)^{n-i},
\end{align*}
as desired.
\end{proof}

%%%%%%%%%%%%%%%%%%%%%%%%%%%%%%%%%%%%%%%%%%%%%%%%%%%%%%%%%%%%%%%%%%%%%

%%%%%%%%%%%%%%%%%%%%%%%%%%%%%%%%%%%%%%%%%%%%%%%%%%%%%%%%%%%%%%%%%%%%%

\section{The convex hull of weakly increasing \texorpdfstring{$\mathbf{x}$}{\textbf{x}}-parking functions}\label{sec:weakly_increasing}

\begin{definition}
A weakly increasing $\mathbf{x}$-parking function associated to a positive integer vector $\mathbf{x}$  is a weakly increasing sequence $(a_1, a_2, \ldots, a_n)$ of positive integers which satisfy $a_i \leq x_1+\cdots+x_i$.
\end{definition}

We will only focus on weakly increasing $\mathbf{x}$-parking functions associated to vectors $\mathbf{x} = (a,b,b, \ldots, b)$. Denote the weakly increasing $\mathbf{x}$-parking function polytope associated to a positive integer vector of the form $(a,b,b,\ldots, b)$ by $\X_{n}^w(a,b)$, where $n$ is the length of the vector. Note that the weakly increasing $\mathbf{x}$-parking functions are just the subset of the $\mathbf{x}$-parking functions which are weakly increasing.
Note that $\X^w_n (1,b)$ is an $(n-1)$-dimensional polytope in $\R^n$ since the first coordinate of any weakly increasing parking function is $1$, that is $x_1=1$, which brings down the dimension of the polytope by 1. See Figure \ref{fig:weakly} for examples.

Next, we reveal a connection between the weakly increasing $\bx$-parking function polytope and the Pitman-Stanley polytope.
The Pitman-Stanley polytope is a well-studied polytope, which has connections to flow polytopes, parking functions, and many other combinatorial objects. 

\begin{definition}
    For any $\bx\in \R^n$, the Pitman-Stanley polytope $\mathsf{PS}_n(\bx) $ is defined to be \[\{\by \in \R^n : y_i \geq 0 \text{ and } y_1+\cdots + y_i \leq x_1 + \cdots + x_i \text{ for all } 1\leq i \leq n\}.\]
\end{definition}

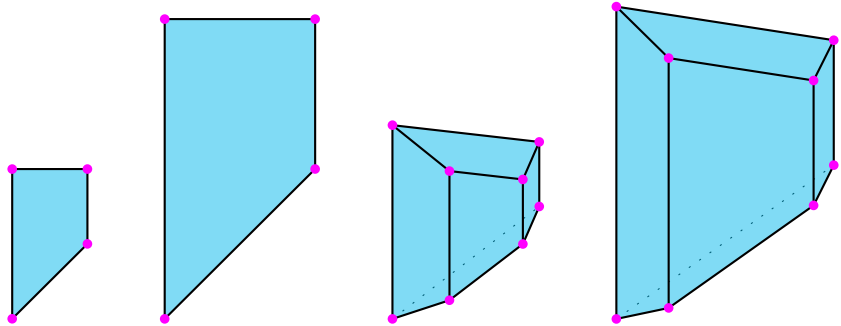
\begin{figure}[h]
    \centering
\begin{tikzpicture}%(1,1)
	[x={(0.003058cm, -0.089913cm)},
	y={(0.999995cm, 0.000293cm)},
	z={(-0.000018cm, 0.995950cm)},
	scale=1.000000,
	back/.style={loosely dotted, thin},
	edge/.style={color=black, thick},
	facet/.style={fill=andresblue,fill opacity=0.500000},
	vertex/.style={inner sep=1pt,circle,draw=andrespink,fill=andrespink,thick}]
%
%

%% Coordinate of the vertices:
%%
\coordinate (1.00000, 1.00000, 1.00000) at (1.00000, 1.00000, 1.00000);
\coordinate (1.00000, 2.00000, 3.00000) at (1.00000, 2.00000, 3.00000);
\coordinate (1.00000, 1.00000, 3.00000) at (1.00000, 1.00000, 3.00000);
\coordinate (1.00000, 2.00000, 2.00000) at (1.00000, 2.00000, 2.00000);
%%
%%
%% Drawing the interior
%%
\fill[facet] (1.00000, 2.00000, 3.00000) -- (1.00000, 2.00000, 2.00000) -- (1.00000, 1.00000, 1.00000) -- (1.00000, 1.00000, 3.00000) -- cycle {};
%%
%%
%% Drawing edges
%%
\draw[edge] (1.00000, 1.00000, 1.00000) -- (1.00000, 1.00000, 3.00000);
\draw[edge] (1.00000, 1.00000, 1.00000) -- (1.00000, 2.00000, 2.00000);
\draw[edge] (1.00000, 2.00000, 3.00000) -- (1.00000, 1.00000, 3.00000);
\draw[edge] (1.00000, 2.00000, 3.00000) -- (1.00000, 2.00000, 2.00000);
%%
%%
%% Drawing the vertices
%%
\node[vertex] at (1.00000, 1.00000, 1.00000)     {};
\node[vertex] at (1.00000, 2.00000, 3.00000)     {};
\node[vertex] at (1.00000, 1.00000, 3.00000)     {};
\node[vertex] at (1.00000, 2.00000, 2.00000)     {};
\end{tikzpicture}
\qquad
\begin{tikzpicture}%(1,2)
	[x={(-0.001611cm, -0.075149cm)},
	y={(0.999999cm, -0.000130cm)},
	z={(0.000009cm, 0.997172cm)},
	scale=1.000000,
	back/.style={loosely dotted, thin},
	edge/.style={color=black, thick},
	facet/.style={fill=andresblue,fill opacity=0.500000},
	vertex/.style={inner sep=1pt,circle,draw=andrespink,fill=andrespink,thick}]
%
%
%% This TikZ-picture was produce with Sagemath version 9.4
%% with the command: ._tikz_2d_in_3d and parameters:
%% view = [-0.561700000000000, -0.562600000000000, -0.606600000000000]
%% angle = 117.600000000000
%% scale = 1
%% edge_color = blue!95!black
%% facet_color = blue!95!black
%% opacity = 0.8
%% vertex_color = green
%% axis = False

%% Coordinate of the vertices:
%%
\coordinate (1.00000, 1.00000, 1.00000) at (1.00000, 1.00000, 1.00000);
\coordinate (1.00000, 3.00000, 5.00000) at (1.00000, 3.00000, 5.00000);
\coordinate (1.00000, 3.00000, 3.00000) at (1.00000, 3.00000, 3.00000);
\coordinate (1.00000, 1.00000, 5.00000) at (1.00000, 1.00000, 5.00000);
%%
%%
%% Drawing the interior
%%
\fill[facet] (1.00000, 3.00000, 5.00000) -- (1.00000, 1.00000, 5.00000) -- (1.00000, 1.00000, 1.00000) -- (1.00000, 3.00000, 3.00000) -- cycle {};
%%
%%
%% Drawing edges
%%
\draw[edge] (1.00000, 1.00000, 1.00000) -- (1.00000, 3.00000, 3.00000);
\draw[edge] (1.00000, 1.00000, 1.00000) -- (1.00000, 1.00000, 5.00000);
\draw[edge] (1.00000, 3.00000, 5.00000) -- (1.00000, 3.00000, 3.00000);
\draw[edge] (1.00000, 3.00000, 5.00000) -- (1.00000, 1.00000, 5.00000);
%%
%%
%% Drawing the vertices
%%
\node[vertex] at (1.00000, 1.00000, 1.00000)     {};
\node[vertex] at (1.00000, 3.00000, 5.00000)     {};
\node[vertex] at (1.00000, 3.00000, 3.00000)     {};
\node[vertex] at (1.00000, 1.00000, 5.00000)     {};
\end{tikzpicture}
\qquad
\begin{tikzpicture}%(2,1)
	[x={(-0.217826cm, -0.499310cm)},
	y={(0.975988cm, -0.111370cm)},
	z={(-0.000078cm, 0.859236cm)},
	scale=1.000000,
	back/.style={loosely dotted, thin},
	edge/.style={color=black, thick},
	facet/.style={fill=andresblue,fill opacity=0.500000},
	vertex/.style={inner sep=1pt,circle,draw=andrespink,fill=andrespink,thick}]
%

%% Coordinate of the vertices:
%%
\coordinate (1.00000, 1.00000, 1.00000) at (1.00000, 1.00000, 1.00000);
\coordinate (2.00000, 3.00000, 4.00000) at (2.00000, 3.00000, 4.00000);
\coordinate (2.00000, 3.00000, 3.00000) at (2.00000, 3.00000, 3.00000);
\coordinate (1.00000, 1.00000, 4.00000) at (1.00000, 1.00000, 4.00000);
\coordinate (2.00000, 2.00000, 4.00000) at (2.00000, 2.00000, 4.00000);
\coordinate (2.00000, 2.00000, 2.00000) at (2.00000, 2.00000, 2.00000);
\coordinate (1.00000, 3.00000, 4.00000) at (1.00000, 3.00000, 4.00000);
\coordinate (1.00000, 3.00000, 3.00000) at (1.00000, 3.00000, 3.00000);
%%
%%
%% Drawing edges in the back
%%
\draw[edge,back] (1.00000, 1.00000, 1.00000) -- (1.00000, 3.00000, 3.00000);
%%
%%
%% Drawing vertices in the back
%%
%%
%%
%% Drawing the facets
%%
\fill[facet] (1.00000, 1.00000, 4.00000) -- (1.00000, 3.00000, 4.00000) -- (2.00000, 3.00000, 4.00000) -- (2.00000, 2.00000, 4.00000) -- cycle {};
\fill[facet] (1.00000, 3.00000, 3.00000) -- (2.00000, 3.00000, 3.00000) -- (2.00000, 3.00000, 4.00000) -- (1.00000, 3.00000, 4.00000) -- cycle {};
\fill[facet] (2.00000, 2.00000, 2.00000) -- (2.00000, 3.00000, 3.00000) -- (2.00000, 3.00000, 4.00000) -- (2.00000, 2.00000, 4.00000) -- cycle {};
\fill[facet] (2.00000, 2.00000, 2.00000) -- (1.00000, 1.00000, 1.00000) -- (1.00000, 1.00000, 4.00000) -- (2.00000, 2.00000, 4.00000) -- cycle {};
%%
%%
%% Drawing edges in the front
%%
\draw[edge] (1.00000, 1.00000, 1.00000) -- (1.00000, 1.00000, 4.00000);
\draw[edge] (1.00000, 1.00000, 1.00000) -- (2.00000, 2.00000, 2.00000);
\draw[edge] (2.00000, 3.00000, 4.00000) -- (2.00000, 3.00000, 3.00000);
\draw[edge] (2.00000, 3.00000, 4.00000) -- (2.00000, 2.00000, 4.00000);
\draw[edge] (2.00000, 3.00000, 4.00000) -- (1.00000, 3.00000, 4.00000);
\draw[edge] (2.00000, 3.00000, 3.00000) -- (2.00000, 2.00000, 2.00000);
\draw[edge] (2.00000, 3.00000, 3.00000) -- (1.00000, 3.00000, 3.00000);
\draw[edge] (1.00000, 1.00000, 4.00000) -- (2.00000, 2.00000, 4.00000);
\draw[edge] (1.00000, 1.00000, 4.00000) -- (1.00000, 3.00000, 4.00000);
\draw[edge] (2.00000, 2.00000, 4.00000) -- (2.00000, 2.00000, 2.00000);
\draw[edge] (1.00000, 3.00000, 4.00000) -- (1.00000, 3.00000, 3.00000);
%%
%%
%% Drawing the vertices in the front
%%
\node[vertex] at (1.00000, 1.00000, 1.00000)     {};
\node[vertex] at (2.00000, 3.00000, 4.00000)     {};
\node[vertex] at (2.00000, 3.00000, 3.00000)     {};
\node[vertex] at (1.00000, 1.00000, 4.00000)     {};
\node[vertex] at (2.00000, 2.00000, 4.00000)     {};
\node[vertex] at (2.00000, 2.00000, 2.00000)     {};
\node[vertex] at (1.00000, 3.00000, 4.00000)     {};
\node[vertex] at (1.00000, 3.00000, 3.00000)     {};
\end{tikzpicture}
\qquad
\begin{tikzpicture}%(2,2)
	[x={(-0.268062cm, -0.535732cm)},
	y={(0.963402cm, -0.149036cm)},
	z={(-0.000034cm, 0.831131cm)},
	scale=1.000000,
	back/.style={loosely dotted, thin},
	edge/.style={color=black, thick},
	facet/.style={fill=andresblue,fill opacity=0.500000},
	vertex/.style={inner sep=1pt,circle,draw=andrespink,fill=andrespink,thick}]
%
%
%% Coordinate of the vertices:
%%
\coordinate (1.00000, 1.00000, 1.00000) at (1.00000, 1.00000, 1.00000);
\coordinate (2.00000, 4.00000, 6.00000) at (2.00000, 4.00000, 6.00000);
\coordinate (2.00000, 4.00000, 4.00000) at (2.00000, 4.00000, 4.00000);
\coordinate (2.00000, 2.00000, 6.00000) at (2.00000, 2.00000, 6.00000);
\coordinate (2.00000, 2.00000, 2.00000) at (2.00000, 2.00000, 2.00000);
\coordinate (1.00000, 1.00000, 6.00000) at (1.00000, 1.00000, 6.00000);
\coordinate (1.00000, 4.00000, 6.00000) at (1.00000, 4.00000, 6.00000);
\coordinate (1.00000, 4.00000, 4.00000) at (1.00000, 4.00000, 4.00000);
%%
%%
%% Drawing edges in the back
%%
\draw[edge,back] (1.00000, 1.00000, 1.00000) -- (1.00000, 4.00000, 4.00000);
%%
%%
%% Drawing vertices in the back
%%
%%
%%
%% Drawing the facets
%%
\fill[facet] (2.00000, 2.00000, 2.00000) -- (2.00000, 4.00000, 4.00000) -- (2.00000, 4.00000, 6.00000) -- (2.00000, 2.00000, 6.00000) -- cycle {};
\fill[facet] (1.00000, 4.00000, 4.00000) -- (2.00000, 4.00000, 4.00000) -- (2.00000, 4.00000, 6.00000) -- (1.00000, 4.00000, 6.00000) -- cycle {};
\fill[facet] (1.00000, 4.00000, 6.00000) -- (2.00000, 4.00000, 6.00000) -- (2.00000, 2.00000, 6.00000) -- (1.00000, 1.00000, 6.00000) -- cycle {};
\fill[facet] (2.00000, 2.00000, 6.00000) -- (1.00000, 1.00000, 6.00000) -- (1.00000, 1.00000, 1.00000) -- (2.00000, 2.00000, 2.00000) -- cycle {};
%%
%%
%% Drawing edges in the front
%%
\draw[edge] (1.00000, 1.00000, 1.00000) -- (2.00000, 2.00000, 2.00000);
\draw[edge] (1.00000, 1.00000, 1.00000) -- (1.00000, 1.00000, 6.00000);
\draw[edge] (2.00000, 4.00000, 6.00000) -- (2.00000, 4.00000, 4.00000);
\draw[edge] (2.00000, 4.00000, 6.00000) -- (2.00000, 2.00000, 6.00000);
\draw[edge] (2.00000, 4.00000, 6.00000) -- (1.00000, 4.00000, 6.00000);
\draw[edge] (2.00000, 4.00000, 4.00000) -- (2.00000, 2.00000, 2.00000);
\draw[edge] (2.00000, 4.00000, 4.00000) -- (1.00000, 4.00000, 4.00000);
\draw[edge] (2.00000, 2.00000, 6.00000) -- (2.00000, 2.00000, 2.00000);
\draw[edge] (2.00000, 2.00000, 6.00000) -- (1.00000, 1.00000, 6.00000);
\draw[edge] (1.00000, 1.00000, 6.00000) -- (1.00000, 4.00000, 6.00000);
\draw[edge] (1.00000, 4.00000, 6.00000) -- (1.00000, 4.00000, 4.00000);
%%
%%
%% Drawing the vertices in the front
%%
\node[vertex] at (1.00000, 1.00000, 1.00000)     {};
\node[vertex] at (2.00000, 4.00000, 6.00000)     {};
\node[vertex] at (2.00000, 4.00000, 4.00000)     {};
\node[vertex] at (2.00000, 2.00000, 6.00000)     {};
\node[vertex] at (2.00000, 2.00000, 2.00000)     {};
\node[vertex] at (1.00000, 1.00000, 6.00000)     {};
\node[vertex] at (1.00000, 4.00000, 6.00000)     {};
\node[vertex] at (1.00000, 4.00000, 4.00000)     {};
\end{tikzpicture}

    \caption{The weakly increasing $\mathbf{x}$-parking function polytopes, from left to right: $\mathfrak{X}_3^w(1,1)$,  $\mathfrak{X}_3^w(1,2)$,  $\mathfrak{X}_3^w(2,1)$,  $\mathfrak{X}_3^w(2,2)$. Note that when $a=1$, they are two-dimensional, and when $a>1$, they are three-dimensional.}
    \label{fig:weakly}
\end{figure}

\begin{proposition}\label{prop:integral_equivalence}
The weakly increasing $\mathbf{x}$-parking function polytope $\X^w_n(a,b)$ is integrally equivalent to the Pitman-Stanley polytope $\mathsf{PS}_n(a-1,b,\dots, b)$.
\end{proposition}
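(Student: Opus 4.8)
The plan is to write down an explicit unimodular affine ``telescoping'' change of coordinates that converts the weakly increasing description of $\X^w_n(a,b)$ into the partial-sum description of the Pitman--Stanley polytope. Define $\Phi\colon \R^n \to \R^n$ by
\[
\Phi(a_1,\dots,a_n) = (a_1 - 1,\ a_2 - a_1,\ a_3 - a_2,\ \dots,\ a_n - a_{n-1}).
\]
Its linear part is the lower bidiagonal matrix with $1$'s on the diagonal and $-1$'s just below it; this matrix has determinant $1$ and integer inverse (the lower-triangular all-ones matrix), and the translation part $(-1,0,\dots,0)$ is integral. Hence $\Phi$ is an affine automorphism of $\R^n$ with $\Phi(\Z^n)=\Z^n$, so its restriction to any polytope preserves the lattice. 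It therefore suffices to prove the set equality $\Phi\big(\X^w_n(a,b)\big) = \mathsf{PS}_n(a-1,b,\dots,b)$.

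Next I would verify that $\Phi$, together with its inverse $\Phi^{-1}(\by) = (1+y_1,\ 1+y_1+y_2,\ \dots,\ 1+y_1+\cdots+y_n)$, restricts to a bijection between the weakly increasing $\bx$-parking functions of length $n$ and the lattice points of $\mathsf{PS}_n(a-1,b,\dots,b)$. For a point $\mathbf{a}=(a_1,\dots,a_n)$, set $a_0:=1$; then $(\Phi(\mathbf{a}))_i = a_i - a_{i-1}$, so the requirements ``$a_i\in\Z_{>0}$ and $a_1\le\cdots\le a_n$'' are equivalent to ``$\Phi(\mathbf{a})\in\Z_{\ge0}^n$''. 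Moreover $\sum_{j=1}^i x_j = (a-1)+(i-1)b$ for $\bx=(a-1,b,\dots,b)$, so the parking condition $a_i\le a+(i-1)b$ is precisely $\sum_{j=1}^i (\Phi(\mathbf{a}))_j = a_i - 1 \le \sum_{j=1}^i x_j$. Thus $\Phi$ sends each weakly increasing $\bx$-parking function to a lattice point of $\mathsf{PS}_n(a-1,b,\dots,b)$, and reading the same identities backwards shows $\Phi^{-1}$ sends each lattice point of that polytope to a weakly increasing $\bx$-parking function.

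To promote this lattice-point bijection to an equality of polytopes, I would use the fact that $\mathsf{PS}_n(\bx)$ is a lattice polytope whenever $\bx\in\Z_{\ge0}^n$ --- here $\bx=(a-1,b,\dots,b)$ with $a,b\ge1$ --- so it coincides with the convex hull of its lattice points. This can be cited from the vertex description of the Pitman--Stanley polytope in \cite{PitmanStanley}, or derived directly from the observation that the coefficient matrix of the defining inequalities $y_i\ge0$ and $\sum_{j\le i}y_j\le\sum_{j\le i}x_j$ is totally unimodular (the partial-sum rows are intervals of $1$'s, and adjoining $-I$ preserves total unimodularity) with integral right-hand side. Writing $W$ for the set of weakly increasing $\bx$-parking functions of length $n$, and using that $\X^w_n(a,b)=\conv(W)$ by definition while $\Phi$ is affine,
\begin{align*}
\Phi\big(\X^w_n(a,b)\big) &= \conv\big(\Phi(W)\big)\\
&= \conv\big(\Z^n\cap\mathsf{PS}_n(a-1,b,\dots,b)\big)\\
&= \mathsf{PS}_n(a-1,b,\dots,b).
\end{align*}

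The only point demanding genuine care is the last one: confirming that the Pitman--Stanley polytope with these parameters has no vertex off the lattice, since this integrality is exactly what allows the full polytope to be recovered from the finite combinatorial data of the parking functions. Everything else is a mechanical unwinding of the two definitions through the telescoping map $\Phi$.
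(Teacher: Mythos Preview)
Your proposal is correct and takes essentially the same approach as the paper: you use the identical telescoping affine map $\Phi(\mathbf{a}) = (a_1-1,\,a_2-a_1,\,\dots,\,a_n-a_{n-1})$ and its inverse, and verify that the defining inequalities on each side correspond under $\Phi$. If anything, your treatment is more careful than the paper's, since you explicitly justify why the lattice-point bijection upgrades to an equality of polytopes (via unimodularity of $\Phi$ and integrality of the Pitman--Stanley vertices), whereas the paper records only the bijection on integer points and leaves that passage implicit.
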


\begin{proof}
Let $T:\R^{n}\to\R^n$ be the linear transformation defined $$T(\bx)=(x_1 - 1, x_2-x_1,x_3-x_2,\dots, x_{n}-x_{n-1}). $$
Note that if $\bx\in \X_n^w(a,b)\cap \Z^{n}$, it follows that $\bx$ is also a weakly increasing $\mathbf{x}$-parking function.
We can see $T(\bx) = \by \in  \mathsf{PS}_n(a-1,b,\dots, b)\cap \Z^n$ since $1\leq x_i \leq x_{i+1}$ implies $y_i \geq 0$ and $1\leq x_i \leq a+(i-1)b$ implies $\sum_{i=1}^k y_i = x_k - 1\leq a+(i-1)b  -1$ for all $i$.

Next, define the linear transformation $S:\R^n\to \R^{n}$ by \[S(\by)=(1+y_1,1+y_1+y_2,\dots, 1+y_1+\dots +y_n).\]
For $\by\in \mathsf{PS}_n\cap \Z^n$, we have that $\bx = S(\by)$ satisfies $x_i\leq x_{i+1}$  and $$x_i = 1 + \sum_{k=1}^i y_k \leq 1+ (a-1) + (i-1)b = a +(i-1)b,$$ 
hence $\bx\in \X^w_n(a,b)\cap \Z^{n}$.
By construction, both $T$ and $S$ are injective.
\end{proof}

\begin{corollary}
Let $t\in \Z_{\geq 0}$. 
The number of lattice points in the $t$-dilate of  $\X^w_n(a,b)$ is given by 
 $$|t\X^w_n(a,b)\cap \Z^n|=\frac{1}{n!}(t(a-1)+1)(t(a-1+nb)+2)(t(a-1+nb)+3)\cdots (t(a-1+nb)+n).$$
\end{corollary}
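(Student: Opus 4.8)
The plan is to reduce the count to a Pitman--Stanley polytope via Proposition \ref{prop:integral_equivalence} and then evaluate the resulting sum by a telescoping induction. By Proposition \ref{prop:integral_equivalence}, $\X^w_n(a,b)$ is integrally equivalent to $\mathsf{PS}_n(a-1,b,\dots,b)$; since integrally equivalent lattice polytopes have the same Ehrhart polynomial, $|t\X^w_n(a,b)\cap\Z^n|=|t\,\mathsf{PS}_n(a-1,b,\dots,b)\cap\Z^n|$ for all $t\in\Z_{\geq 0}$. (Concretely, the transformation $T$ from the proof of Proposition \ref{prop:integral_equivalence} is unimodular and satisfies $T\circ\varphi_t=t\cdot T$, so it carries the $t$-dilate of $\X^w_n(a,b)$ onto the $t$-dilate of $\mathsf{PS}_n(a-1,b,\dots,b)$ and restricts to a bijection on lattice points.) Directly from the definition one has $t\,\mathsf{PS}_n(\bx)=\mathsf{PS}_n(t\bx)$, so it suffices to count the lattice points of $\mathsf{PS}_n(A,B,\dots,B)$ where $A:=t(a-1)$ and $B:=tb$; note $A+1=t(a-1)+1$ and $A+nB=t(a-1+nb)$, so the target value is exactly $\tfrac{1}{n!}(A+1)\prod_{j=2}^{n}(A+nB+j)$.

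Next I would set up a recursion. For nonnegative integers $c_1,\dots,c_n$ let $N_n(c_1,\dots,c_n):=|\mathsf{PS}_n(c_1,\dots,c_n)\cap\Z^n|$, the number of $(y_1,\dots,y_n)\in\Z_{\geq 0}^n$ with $y_1+\cdots+y_i\leq c_1+\cdots+c_i$ for all $i$. Fixing $y_1=j\in\{0,\dots,c_1\}$, the remaining coordinates $(y_2,\dots,y_n)$ range precisely over the lattice points of $\mathsf{PS}_{n-1}(c_1+c_2-j,c_3,\dots,c_n)$, which yields
\[ N_n(c_1,\dots,c_n)=\sum_{j=0}^{c_1}N_{n-1}(c_1+c_2-j,c_3,\dots,c_n). \]
Specializing to $(A,B,\dots,B)$ and writing $\psi_k(m):=N_k(m,B,\dots,B)$ (with $k$ entries), this becomes $\psi_n(A)=\sum_{m=B}^{A+B}\psi_{n-1}(m)$ with base case $\psi_1(m)=m+1$.

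Finally I would prove by induction on $k$ that $\psi_k(m)=\tfrac{1}{k!}(m+1)\prod_{j=2}^{k}(m+kB+j)$; evaluating at $k=n$ and $m=A$ then gives the corollary. The base case $k=1$ is immediate. For the step, insert the formula for $\psi_{n-1}$ into $\psi_n(A)=\sum_{m=B}^{A+B}\psi_{n-1}(m)$ and use $m+1=(m+(n-1)B+1)-(n-1)B$ to write the summand $(m+1)\prod_{j=2}^{n-1}(m+(n-1)B+j)$ as a difference of two products of consecutive integers, namely $\prod_{j=1}^{n-1}(m+(n-1)B+j)-(n-1)B\prod_{j=2}^{n-1}(m+(n-1)B+j)$. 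After a shift of index, each of the two resulting sums telescopes by the hockey-stick identity $\sum_{u=p}^{q}(u+1)(u+2)\cdots(u+r)=\tfrac{1}{r+1}\big[(q+1)(q+2)\cdots(q+r+1)-p(p+1)\cdots(p+r)\big]$. The two upper-limit contributions combine — using $\tfrac{A+nB+1}{n}-B=\tfrac{A+1}{n}$ — into $\tfrac{A+1}{n!}\prod_{j=2}^{n}(A+nB+j)$, while the two lower-limit contributions cancel exactly because $\prod_{j=0}^{n-1}(nB+j)=nB\cdot\prod_{j=1}^{n-1}(nB+j)$. This is precisely the asserted closed form.

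The main obstacle is the bookkeeping in this last step: aligning the two telescoped products and confirming the exact cancellation of the lower-limit boundary terms, which is what collapses the double structure down to a single product. The degenerate cases need no separate treatment — when $a=1$ one has $A=0$, so the recursion reads $\psi_n(0)=\psi_{n-1}(B)$ (and the polytope is $(n-1)$-dimensional), and when $t=0$ one has $A=B=0$ with $\psi_n(0)=1$, matching $\tfrac1{n!}\cdot1\cdot2\cdots n=1$.
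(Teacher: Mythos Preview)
Your proof is correct. Both you and the paper begin identically, invoking Proposition \ref{prop:integral_equivalence} to reduce to counting lattice points in $\mathsf{PS}_n(a-1,b,\dots,b)$. From there the paper simply cites Theorem 13 of Pitman and Stanley \cite{PitmanStanley}, which already gives the Ehrhart polynomial of $\mathsf{PS}_n(a,b,\dots,b)$, and substitutes $a-1$ for $a$. You instead rederive that formula from scratch: you set up the first-coordinate recursion $\psi_n(A)=\sum_{m=B}^{A+B}\psi_{n-1}(m)$ and close it by splitting $(m+1)\prod_{j=2}^{n-1}(m+(n-1)B+j)$ into two telescoping products of consecutive integers, checking that the lower-limit boundary terms cancel and the upper-limit terms combine via $\tfrac{A+nB+1}{n}-B=\tfrac{A+1}{n}$. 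This is a genuinely more elementary and self-contained argument; the paper's route is shorter only because it outsources exactly this computation to \cite{PitmanStanley}. Your approach has the advantage of not depending on that external reference, at the cost of the bookkeeping you flag in your last paragraph.
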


\begin{proof}
By Proposition \ref{prop:integral_equivalence}, $\X^w_n(a,b)$ is integrally equivalent to $\mathsf{PS}_n(a-1,b,\dots, b)$.
By substituting $a-1$ for $a$ in the equation of Theorem 13 in \cite{PitmanStanley}, we obtain the result. 
\end{proof}

Another consequence of Proposition \ref{prop:integral_equivalence} is the following:

\begin{corollary}
For the special case when $\bx=(a,b,\dots,b)=(1,1,\dots,1)\in \R^n$, the weakly increasing classical parking function $\X_{n}^w(1,1)$ has volume $n^{n-2}$ and contains $C_{n}$ lattice points, where $C_n=\displaystyle{\frac{1}{n+1}\binom{2n}{n}}$ denotes the $n$-th Catalan number. 
\end{corollary}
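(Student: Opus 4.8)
The plan is to read off both facts from the lattice-point formula of the preceding corollary, specialized to $a=b=1$. Since $a=b=1$ gives $a-1=0$ and $a-1+nb=n$, that formula becomes
\[ |t\X^w_n(1,1)\cap\Z^n| = \frac{1}{n!}\,(1)(tn+2)(tn+3)\cdots(tn+n) = \frac{1}{n!}\prod_{j=2}^{n}(tn+j). \]
First I would set $t=1$ to count the lattice points of $\X^w_n(1,1)$ itself: using $\prod_{j=2}^{n}(n+j)=(n+2)(n+3)\cdots(2n)=\frac{(2n)!}{(n+1)!}$, this evaluates to $\frac{(2n)!}{n!\,(n+1)!}=\frac{1}{n+1}\binom{2n}{n}=C_n$, as claimed.

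Next I would extract the volume. The product $\prod_{j=2}^{n}(tn+j)$ is a product of $n-1$ factors each linear in $t$, so the right-hand side above is a polynomial in $t$ of degree $n-1$; this matches the observation made just before Proposition~\ref{prop:integral_equivalence} that $\X^w_n(1,1)$ is $(n-1)$-dimensional, so this polynomial is the Ehrhart polynomial of $\X^w_n(1,1)$ with respect to the affine lattice carried by its affine hull $\{x_1=1\}$, which is a translate of a coordinate subspace along which the relative volume coincides with the Euclidean volume. Its leading coefficient is $\frac{1}{n!}\,n^{n-1}=\frac{n^{n-2}}{(n-1)!}$, so by Ehrhart's theorem the volume of $\X^w_n(1,1)$ is $\frac{n^{n-2}}{(n-1)!}$ and its normalized volume is $(n-1)!\cdot\frac{n^{n-2}}{(n-1)!}=n^{n-2}$. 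As an independent check I would use Proposition~\ref{prop:integral_equivalence}, which makes $\X^w_n(1,1)$ integrally equivalent to $\mathsf{PS}_n(0,1,\ldots,1)$; the inequalities there force $y_1=0$, so deleting the first coordinate exhibits an integral equivalence with $\mathsf{PS}_{n-1}(1,\ldots,1)$, whose normalized volume equals the number of classical parking functions of length $n-1$, namely $n^{n-2}$ (Theorem~\ref{thm:number_x-parking} with $a=b=1$).

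I do not expect a genuine obstacle here: the argument is essentially bookkeeping on top of the previous corollary. The two points that need a little care are confirming that the polynomial supplied there really has degree $n-1=\dim\X^w_n(1,1)$ (which is what licenses reading the volume off the leading term via Ehrhart's theorem), and being precise about the normalization convention, since the quantity $n^{n-2}$ in the statement is the normalized volume, while the Euclidean volume of $\X^w_n(1,1)$ is $\frac{n^{n-2}}{(n-1)!}$.
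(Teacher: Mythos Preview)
Your argument is correct. Your primary route is different from the paper's: you specialize the Ehrhart-type formula of the preceding corollary to $a=b=1$, evaluate at $t=1$ for the lattice-point count, and read off the leading coefficient for the volume. The paper instead invokes Proposition~\ref{prop:integral_equivalence} to pass to $\mathsf{PS}_{n-1}(1,\ldots,1)$ and then cites the Pitman--Stanley/flow-polytope literature for both the volume and the lattice-point count. Your approach has the advantage of being entirely internal to the paper (no external volume or lattice-point results are needed beyond what the preceding corollary already imported), and it makes transparent why the degree drops to $n-1$ when $a=1$. The paper's approach, on the other hand, situates the result in the broader flow-polytope context. Your cross-check via $\mathsf{PS}_n(0,1,\ldots,1)\cong\mathsf{PS}_{n-1}(1,\ldots,1)$ is exactly the paper's argument, so you have in fact recovered both proofs. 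Your closing remark about normalization is apt: the quantity $n^{n-2}$ is the normalized $(n-1)$-dimensional volume, consistent with the Pitman--Stanley convention the paper is implicitly using.
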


\begin{proof}
The weakly increasing classical parking function $\X^w_n(1,1)$ is integrally equivalent to the Pitman-Stanley polytope $\mathsf{PS}_{n}(0,1,1,\ldots,1) = \mathsf{PS}_{n-1}(1,1,\ldots,1)$. 
It follows from work by Pitman and Stanley \cite{PitmanStanley} and Benedetti et al.~\cite{BGHHKMY} that the Pitman-Stanley polytope is integrally equivalent to a flow polytope arising from a graph consisting of a path $1\rightarrow 2 \rightarrow \cdots \rightarrow n$ and additional edges $(1,n),(2,n),\dots, (n-2,n)$, for which the volume and lattice point count are known.
\end{proof}

The following proposition known and can be deduced from the existing literature detailed above, but we provide a proof for completeness.

\begin{proposition}
The weakly increasing classical parking function polytope $\X^w_n(1,1)$ is an $(n-1)$-dimensional polytope given by the following equality and inequalities.
\begin{align*}
    x_1  &= 1,\\
    x_i &\leq i, \text{ \emph{for} } 2\leq i\leq n,\\
    x_{i-1} &\leq x_i,  \text{ \emph{for} }2\leq i\leq n.
\end{align*}
Furthermore, $\X_n^w(1,1)$ has
\begin{enumerate}[\emph{(}i\emph{)}]
    \item $2(n-1)$ facets,
    \item $2^{n-1}$ vertices, and 
    \item $2^{n-2}(n-1)$ edges.
\end{enumerate}
\end{proposition}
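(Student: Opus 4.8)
The plan is to first pin down the equality--inequality description, and then read off (i)--(iii) from it. Every weakly increasing classical parking function $(x_1,\dots,x_n)$ satisfies $x_1=1$ (since $1\le x_1\le 1$), $x_i\le i$, and $x_{i-1}\le x_i$, so $\X_n^w(1,1)$ is contained in the polytope $P$ cut out by these relations. Conversely, $P$ is bounded (as $x_1=1$ and $1\le x_{i-1}\le x_i\le i$), and the coefficient matrix of the inequality system defining $P$ has at most one $+1$ and at most one $-1$ in each row, hence is totally unimodular; therefore $P$ is an integral polytope, every vertex of $P$ is an integer point, and an integer point of $P$ is exactly a weakly increasing classical parking function, so $P=\X_n^w(1,1)$. (Alternatively this follows from the integral equivalence $\X_n^w(1,1)\cong\mathsf{PS}_n(0,1,\dots,1)$ of Proposition~\ref{prop:integral_equivalence}.) Since $P\subseteq\{x_1=1\}$ we have $\dim\X_n^w(1,1)\le n-1$, and the $n$ points $(1,\dots,1)$ together with, for $2\le k\le n$, the point having $x_j=1$ for $j<k$ and $x_j=2$ for $j\ge k$ are affinely independent, so $\dim\X_n^w(1,1)=n-1$.

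Next I would describe the vertices. For each $i\in\{2,\dots,n\}$ at most one of the two relations $x_i=i$ and $x_i=x_{i-1}$ can hold at a point of $P$, since their conjunction forces $x_{i-1}=i>i-1\ge x_{i-1}$. Imposing exactly one of them for each $i$ (together with $x_1=1$) yields a square linear system whose coefficient matrix, with the variables ordered $x_2,\dots,x_n$, is lower triangular with unit diagonal; one checks by induction on $i$ that its unique solution is feasible (each $x_i\le i$, weakly increasing) and that distinct choice sequences give distinct points (the choice at step $i$ is recovered from whether $x_i=i$). This exhibits $2^{n-1}$ vertices. Conversely, a vertex of the $(n-1)$-dimensional polytope $P$ lies on at least $n-1$ of the $2(n-1)$ facet-supporting inequalities, and by the observation above at most one of them can come from each index $i$, so exactly one does and the vertex arises from such a choice. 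This proves (ii) and, since each vertex then lies on exactly $n-1$ facets, shows that $\X_n^w(1,1)$ is a \emph{simple} polytope.

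Finally, for the facets I would show that all $2(n-1)$ listed inequalities are facet-defining, which also proves the description is irredundant and that there are exactly $2(n-1)$ facets, giving (i). For fixed $i$, the point with $x_j=1+(j-1)\varepsilon$ for $j\le i-1$, $x_i=1+(i-2)\varepsilon$, and $x_j=1+(j-2)\varepsilon$ for $j\ge i$ (with $\varepsilon>0$ small) satisfies $x_{i-1}=x_i$ and all other defining inequalities strictly, so it lies in the relative interior of the face $\{x_{i-1}=x_i\}$; similarly, a strictly increasing sequence with $x_i=i$ and with $x_j$ slightly below $j$ for $j\neq i$ lies in the relative interior of the face $\{x_i=i\}$. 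Hence both faces have dimension $n-2$. For (iii), since $\X_n^w(1,1)$ is a simple $(n-1)$-polytope every vertex meets exactly $n-1$ edges, so double counting gives $(n-1)\cdot 2^{n-1}/2=2^{n-2}(n-1)$ edges. The main obstacle will be carrying out the facet/irredundancy step cleanly: verifying that each constructed relative-interior point genuinely lies in $P$ and is tight on exactly the intended inequality, including the boundary indices $i=2$ and $i=n$ and small values of $n$. Once the totally unimodular description and the binary-choice vertex description are in hand, everything else is bookkeeping.
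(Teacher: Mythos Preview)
Your proposal is correct and more carefully argued than the paper's own proof, which is quite sketchy. The two approaches agree in spirit on the inequality description and on (ii): both describe the vertices via the $n-1$ binary choices ``$x_i=i$'' versus ``$x_i=x_{i-1}$''. The main differences are in how the description is justified and in (iii). For the description, the paper simply refers back to the proof of Proposition~\ref{prop:inequality}, whereas you invoke total unimodularity to conclude that every vertex of the inequality polytope is integral and hence a weakly increasing parking function; your argument is cleaner and stands on its own. For (i), the paper just counts the listed inequalities without verifying irredundancy, while you explicitly construct relative-interior points witnessing that each of the $2(n-1)$ inequalities is facet-defining; this extra work is what makes the facet count rigorous. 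For (iii), the paper aims to characterize edges directly as pairs of vertices whose binary-choice strings differ in exactly one position, arguing inductively; you instead observe that each vertex lies on exactly $n-1$ facets, conclude simplicity, and double count incidences to get $2^{n-2}(n-1)$ edges. Your route is shorter and avoids the edge-by-edge analysis, at the cost of not producing the explicit adjacency description (which the paper's approach yields as a byproduct). Both arguments are valid; yours is the more efficient path to the stated enumeration.
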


\begin{proof}
The inequality description follows similar to the proof of Proposition \ref{prop:inequality} restricting ourselves to the weakly-increasing $\bx$-parking functions. 
\begin{enumerate}[($i$)]
    \item This follows from a straightforward enumeration of the inequalities in the description above. 
    \item All the vertices are of the form $$(\underbrace{1,\ldots,1}_k, v_{k+1}, \dots, c_n),$$ where  $v_k = k$ and for each $k \leq i < n$ we have either  $v_{i+1} = v_i$ (``min'') or $v_{i+1} =i+1$ (``max'').  
    Hence, each vertex corresponds to a sequence of $n-1$ binary choices.
    \item We claim that vertices $v,u$ are connected by an edge if and only if their construction (given above by a sequence of binary choices) differs by exactly one choice.

We show this inductively.
Assume this is true for $\X_m^w (1,1)$ where $m<n$ and let $u,v$ be two vertices in $\X_n^w(1,1)$ that differ by the $k$-th choice.
If $k = n$, we have that $u' = v'$ where $v' = (v_1, \dots, v_{n-1})$, which is a vertex of $\X_{n-1}^w (1,1)$. 
Hence, there exists a $c'\in \mathbb{R}^{n-1}$ that maximizes it. 
Let $c = (c',0)\in \mathbb{R}^n$; it is evident that $c\cdot u = c\cdot v > c \cdot w$ for any other vertex $w\in \X_n^w(1,1)$.
If $k<n$ and we continue to choose min for the rest of the choices after $k$, it follows that
\begin{align*}
    u &= (u_1, \dots, u_{k-1}, \underbrace{u_{k-1}, \dots, u_{k-1}}_{n-k+1}), \\ v &= (u_1, \dots, u_{k-1}, \underbrace{k, \dots, k}_{n-k+1}).
\end{align*}
\end{enumerate}
\end{proof}

\begin{remark}
It follows from the inequality description of $\X_n^w(1,1)$ that all lattice points are weakly increasing parking functions.
\end{remark}

%%%%%%%%%%%%%%%%%%%%%%%%%%%%%%%%%%%%%%%%%%%%%%%%%%%%%%%%%%%%%%%%%%%%%

\section{Further Directions and Discussion}\label{sec:future}
We conclude this paper by providing some directions for future research. 

\subsection{On the classical parking function polytope}

Given a term order $\prec$, every non-zero polynomial $f \in k[\mathbf{x}]$ has a unique initial monomial, denoted by $in_\prec(f)$. 
If $I$ is an ideal in $k[\mathbf{x}]$, then its \textit{initial ideal} is the monomial ideal $in_\prec(I) := \langle in_\prec(f) : f \in I \rangle$.
Let $\mathcal{A} = \{ \mathbf{a}_1, \ldots, \mathbf{a}_k\} \subseteq \Z^n$ and denote the toric ideal of $\mathcal{A}$ by $I_\mathcal{A}$. 

\begin{proposition}[Corollary 8.9, \cite{Stu}] The initial ideal $in_\prec(I_\mathcal{A})$ is square-free if and only if the corresponding regular triangulation $\Delta_\prec$ of $\mathcal{A}$ is unimodular.
\end{proposition}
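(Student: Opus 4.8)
The plan is to route the argument through the Stanley--Reisner ideal of $\Delta_\prec$ and reduce the statement to a degree (multiplicity) comparison, using the elementary fact that a monomial ideal of $k[\mathbf{x}]$ is radical exactly when it is square-free; so it is enough to decide when $in_\prec(I_\mathcal{A})$ is radical. First I would invoke the two structural results of \cite{Stu} that define the assignment $\prec \mapsto \Delta_\prec$ in the first place: (a) $\sqrt{in_\prec(I_\mathcal{A})} = I_{\Delta_\prec}$, the Stanley--Reisner ideal of the regular triangulation $\Delta_\prec$ of $\mathcal{A}$, so that always $in_\prec(I_\mathcal{A}) \subseteq I_{\Delta_\prec}$ and the top-dimensional minimal primes of $in_\prec(I_\mathcal{A})$ are the monomial primes $P_\sigma$ indexed by the maximal simplices $\sigma$ of $\Delta_\prec$; and (b) the localization of $in_\prec(I_\mathcal{A})$ at each $P_\sigma$ has length $\nVol(\sigma)$, the normalized volume of $\sigma$ measured against the lattice $\Z\mathcal{A}$.

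Next I would set down the numerical identity linking the two sides. Forming an initial ideal preserves the Hilbert function, hence the degree, and $\deg\bigl(k[\mathbf{x}]/I_\mathcal{A}\bigr)$ equals $\nVol(\conv\mathcal{A})$ by the classical degree formula for toric ideals; since any triangulation subdivides normalized volume, this gives
\[
\deg\bigl(k[\mathbf{x}]/in_\prec(I_\mathcal{A})\bigr) \;=\; \nVol(\conv\mathcal{A}) \;=\; \sum_{\sigma}\nVol(\sigma),
\]
the sum over the maximal simplices of $\Delta_\prec$, while $\deg\bigl(k[\mathbf{x}]/I_{\Delta_\prec}\bigr) = \#\{\sigma\}$ because $I_{\Delta_\prec}$ is radical with minimal primes precisely the $P_\sigma$. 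With this the two implications are short. If $in_\prec(I_\mathcal{A})$ is square-free it equals its radical $I_{\Delta_\prec}$, so $\sum_\sigma \nVol(\sigma) = \#\{\sigma\}$; each $\nVol(\sigma) \ge 1$, hence $\nVol(\sigma) = 1$ for every maximal simplex, i.e.\ $\Delta_\prec$ is unimodular. Conversely, if $\Delta_\prec$ is unimodular then $\deg\bigl(k[\mathbf{x}]/in_\prec(I_\mathcal{A})\bigr) = \#\{\sigma\} = \deg\bigl(k[\mathbf{x}]/I_{\Delta_\prec}\bigr)$, and (b) gives that every $P_\sigma$ occurs in $in_\prec(I_\mathcal{A})$ with multiplicity $1$; combining the inclusion $in_\prec(I_\mathcal{A}) \subseteq I_{\Delta_\prec}$ with the fact that a unimodular regular triangulation of $\mathcal{A}$ forces $k[\mathbf{x}]/I_\mathcal{A}$ and the Stanley--Reisner ring $k[\mathbf{x}]/I_{\Delta_\prec}$ to have equal Hilbert series (the $h$-vector of $\Delta_\prec$ being, in that case, the $h^*$-vector of $\conv\mathcal{A}$), a containment of ideals with identical Hilbert functions is an equality, so $in_\prec(I_\mathcal{A}) = I_{\Delta_\prec}$ is square-free.

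The main obstacle is input (b), i.e.\ the full strength of the correspondence of \cite{Stu}, Chapter 8: identifying $\sqrt{in_\prec(I_\mathcal{A})}$ with $I_{\Delta_\prec}$ and computing the local length of $in_\prec(I_\mathcal{A})$ along $P_\sigma$ as $\nVol(\sigma)$ is precisely where the geometry of regular subdivisions and the combinatorics of reduced Gr\"obner bases of toric ideals genuinely enter; granting those, together with the standard ``degree $=$ normalized volume'' and $h = h^*$ facts, the remaining bookkeeping above is routine. For the converse one can instead argue by hand, without the $h = h^*$ input, that when $\Delta_\prec$ is unimodular the reduced Gr\"obner basis of $I_\mathcal{A}$ consists of binomials whose leading terms are square-free, reading this off Buchberger's algorithm via the way the corner structure of $\prec$ interacts with unimodularity.
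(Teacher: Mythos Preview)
The paper does not give its own proof of this statement: it is quoted verbatim as Corollary~8.9 of \cite{Stu} and used as a black box to motivate the subsequent conjecture on unimodular triangulations of $\PF_n$. There is therefore nothing in the paper to compare your argument against.

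That said, your outline is the standard one and matches the development in Chapter~8 of \cite{Stu}: the two inputs you label (a) and (b) are exactly Theorem~8.3 and Corollary~8.9's supporting lemmas there, and the degree bookkeeping you describe is how Sturmfels packages the conclusion. One small comment: in the converse direction you invoke the $h = h^*$ identity for unimodular triangulations to conclude equality of Hilbert functions, but this is circular in spirit, since that identity is itself typically \emph{derived} from the square-freeness of the initial ideal (cf.\ the Proposition of Stanley cited later in the same subsection). The cleaner route, which you allude to at the end, is to use (b) directly: if every maximal $\sigma$ has $\nVol(\sigma)=1$, then $in_\prec(I_\mathcal{A})$ has no embedded primes and is generically reduced along each $P_\sigma$, hence equals its radical $I_{\Delta_\prec}$. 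That avoids any appeal to $h = h^*$.
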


Computational evidence suggests the following conjecture, which could be approached using the theory of Gr\"obner bases.

\begin{conjecture}
The parking function polytope $\PF_n$ admits a regular unimodular triangulation. 
\end{conjecture}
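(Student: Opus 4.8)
The plan is to exploit the generalized-permutohedron structure of $\PF_n$. First I would record the structural fact that, up to the lattice translation $(x_1,\dots,x_n)\mapsto(x_1-1,\dots,x_n-1)$ of Proposition~\ref{prop:classical-partial}, $\PF_n$ is the Minkowski sum
\[
\PF_n-\mathbf 1 \;=\; \sum_{1\le i<j\le n}\conv\{\mathbf 0,\mathbf e_i,\mathbf e_j\},
\]
equivalently the independence polytope of the polymatroid whose rank of a $k$-set is $(n-1)+(n-2)+\cdots+(n-k)$. This is quick to verify: the support function of the right-hand side in direction $w$ is $\sum_{i<j}\max\{0,w_i,w_j\}$, and for $w$ with positive coordinates $w_{\sigma(1)}>\cdots>w_{\sigma(m)}>0\ge w_{\sigma(m+1)}\ge\cdots$ this equals $\sum_{j=1}^m (n-j)\,w_{\sigma(j)}$, which is exactly the value the polymatroid greedy algorithm produces on $\PF_n-\mathbf 1$ (equivalently, one matches the vertex list of Proposition~\ref{prop:abn_vertex} summand by summand). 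With this in hand, ``$\PF_n$ has a regular unimodular triangulation'' becomes a question about a concrete, highly symmetric hypergraphic polytope on the complete graph $K_n$.

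Next I would apply the Cayley trick (Huber--Rambau--Santos): regular subdivisions of $\sum_{i<j}\conv\{\mathbf 0,\mathbf e_i,\mathbf e_j\}$ are in bijection with regular subdivisions of the Cayley polytope
\[
\mathcal C \;=\; \conv\Bigl(\textstyle\bigcup_{1\le i<j\le n}\bigl\{(\mathbf v,\mathbf e_{ij}) : \mathbf v\in\{\mathbf 0,\mathbf e_i,\mathbf e_j\}\bigr\}\Bigr)\subseteq \R^n\times\R^{\binom n2},
\]
under which unimodular triangulations of $\mathcal C$ correspond to coherent fine mixed subdivisions of the sum whose every cell is unimodular; refining such a mixed subdivision yields a regular unimodular triangulation of $\PF_n$. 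So it suffices to produce a regular unimodular triangulation of $\mathcal C$, which sits inside the product of simplices $\Delta_{\binom n2-1}\times\Delta_n$ (each $\conv\{\mathbf 0,\mathbf e_i,\mathbf e_j\}$ is a face of the standard $n$-simplex). The concrete plan is then: (a) fix a term order adapted to the $S_n$-symmetry (a lexicographic refinement that first sorts by the $\binom n2$-block, then by the $\R^n$-block); (b) exhibit a Gr\"obner basis of $I_{\mathcal C}$ for that order with squarefree leading terms, of ``sorting/exchange'' type as is typical for subpolytopes of products of simplices; and (c) conclude unimodularity via Corollary~8.9 of \cite{Stu}. A useful guide for the combinatorics is Theorem~\ref{thm:main_theorem}(vi): $\nVol(\PF_n)$ equals the number of $n\times n$ $(0,1)$-matrices with two $1$'s per row and positive permanent, so the maximal cells of the sought triangulation should be indexed by exactly these matrices --- the row supports recording, for each summand $\conv\{\mathbf 0,\mathbf e_i,\mathbf e_j\}$, the pair $\{i,j\}$ it contributes an edge along --- with the positive-permanent (Hall) condition being precisely the affine-independence condition that makes the corresponding mixed cell a unimodular simplex. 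As an alternative route, since the excerpt already links weakly increasing parking functions and relatives of $\PF_n$ to Pitman--Stanley and hence to flow polytopes, one could instead realize $\PF_n-\mathbf 1$ as integrally equivalent to a flow polytope on an explicit graph and invoke DKK-type (Danilov--Karzanov--Koshevoy) framed triangulations, or build the fine mixed subdivision directly by an induction that inserts the summands $\conv\{\mathbf 0,\mathbf e_i,\mathbf e_j\}$ one at a time in a fixed order.

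The hard part will be step~(b)--(c): one cannot simply cite a black box, since regular unimodular triangulations are not known to exist for arbitrary (poly)matroid polytopes --- this includes the well-known open case of matroid base polytopes --- so the proof must genuinely use the special structure here (summation over \emph{all} pairs, the full $S_n$-symmetry). The obstruction is concrete: $\PF_n$ is not compressed for $n\ge 3$ (its facet $x_1+\cdots+x_n\le\binom{n+1}2$ takes more than two values on the vertices, so $\PF_n$ is not $2$-level), hence an arbitrary pulling or placing triangulation need not be unimodular and the weight function or term order must be chosen carefully. I would first settle $n\le 5$ by computer --- enumerating regular triangulations, or running Gr\"obner bases under several term orders --- to pin down the correct order and confirm the indexing by positive-permanent matrices, and then try to promote the pattern to a uniform proof, most plausibly by verifying that the proposed Gr\"obner basis really is one (an $S$-polynomial check that telescopes because of the symmetry) or via the inductive mixed-subdivision argument above.
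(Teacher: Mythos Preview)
The paper does not prove this statement: it is posed as an open conjecture in Section~\ref{sec:future}, supported only by the single computed example $n=3$. So there is no proof in the paper to compare your proposal against, and what you have written is explicitly a plan rather than a proof.

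That said, a few comments on the plan itself. Your Minkowski decomposition
\[
\PF_n-\mathbf 1 \;=\; \sum_{1\le i<j\le n}\conv\{\mathbf 0,\mathbf e_i,\mathbf e_j\}
\]
is correct (the support-function check you sketch goes through), and recasting the problem as one about a hypergraphic polytope on $K_n$ is a sensible starting point that the paper does not record. Two cautions, however. First, the Cayley trick gives a bijection between regular triangulations of $\mathcal C$ and coherent \emph{mixed subdivisions} of the sum; the cells of the latter are Minkowski sums $B_1+\cdots+B_m$, not simplices, so ``refining'' each mixed cell into unimodular simplices and keeping the whole thing regular is an additional nontrivial step, not an automatic consequence. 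Second, your proposed indexing by $n\times n$ matrices with two $1$'s per row does not line up with the Minkowski structure: there are $\binom n2$ summands, not $n$, so a fine mixed cell is determined by a choice at each pair $\{i,j\}$, not at each row of an $n\times n$ matrix. The numerical coincidence with Theorem~\ref{thm:main_theorem}(vi) is suggestive but would need a different packaging to become a bijection with maximal simplices.

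Finally, you are right that the difficulty is concentrated in step~(b)--(c): producing the explicit term order and squarefree initial ideal is the entire content of the conjecture, and nothing in your outline forces this to succeed. The flow-polytope alternative is also speculative --- the paper establishes a flow-polytope link only for the \emph{weakly increasing} subpolytope $\X_n^w(1,1)$, not for $\PF_n$ itself. As written, this is a reasonable research programme, not a proof.
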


\begin{example}
Consider all lattice points of $\PF_3$, which includes the $16$ parking functions of length $3$ and the point $(2,2,2)$ which can be used for a triangulation.
Let $R = \Q[a,b, \ldots,p,q]$, where each variable corresponds to a lattice point of $\PF_3$, and $S = \Q[x,y,z,w]$, and take $f: S \to R$ to be defined by
\[(x,y,z,w) \mapsto (x^1 y^1 z^1 w, x^1 y^1 z^2 w, x^1 y^2 z^1 w, x^2 y^1 z^1 w, x^1 y^1 z^3 w, x^1 y^3 z^1 w, x^3 y^1 z^1 w, x^1 y^2 z^2 w, \]
\[x^2 y^1 z^2 w, x^2 y^2 z^1 w,x^1 y^2 z^3 w, x^1 y^3 z^2 w, x^2 y^1 z^3 w, x^3 y^1 z^2 w, x^3 y^2 z^1 w, x^2 y^3 z^1 w,  x^2 y^2 z^2 w). \]

Then the initial ideal is 
\[(ae, af, bf, ef, ag, bg, cg, eg, fg, ah, bh, gh, ai, bi, ci, fi, aj, bj, cj, ej, ak, bk, ck, dk, fk, gk, al, bl,  cl, \]
\[ dl, el, gl, il, am, bm, cm, dm, fm, gm, hm, an, bn, cn, dn, en, fn, hn, kn, ln, ao, bo, co, do, eo, fo,  \]
\[ho, io, ko, lo, mo, ap, bp, cp, dp, ep, gp, hp, ip, kp, mp, np, aq, bq, cq, dq, eq, fq, gq, hq, iq, kq).\]
Notice that the initial ideal is square-free; hence, there exists a unimodular triangulation of this parking function polytope using on the parking functions and an additional lattice point as vertices.
\end{example} 

If this conjecture holds true, the following problem may be of interest.

\begin{problem}
Find a bijection between the simplices of a unimodular triangulation of $\PF_n$ and $(0,1)$-matrices with two 1's in each row with positive permanent, as discussed in Theorem \ref{thm:main_theorem}.
\end{problem}

The \emph{Ehrhart function} of a polytope $P\subset \R^n$ is $\operatorname{ehr}_P(t):=|tP\cap \Z^n|$, where $tP=\{t\bx:\ \bx\in P\}$.
When $P$ is a lattice polytope (its vertices have integer coordinates), the Ehrhart function is a polynomial in $t$, with degree equal to the dimension of $P$, leading coefficient equal to its normalized volume, second-leading coefficient equal to half the surface area, and constant coefficient 1.
The \emph{Ehrhart polynomial} of a lattice polytope $P$ of dimension $n$ can always be written in the form $\operatorname{ehr}_P(t)=\sum_{i=0}^nh_i^*\binom{t+n-i}{n}$; the sequence $(h_0^*,\dots,h_n^*)$ is called the \emph{$h^*$-vector}. 
Equivalently, $\sum_{t\geq 0}\operatorname{ehr}_P(t)z^t=\frac{h^*(P;z)}{(1-z)^{n+1}}$, where $h^*(P;z)=h_0^*+h_1^*z+\cdots+h_n^*z^n$.

\begin{problem}
Determine a formula for the Ehrhart polynomial (or equivalently, the $h^*$-polynomial) of $\PF_n$.   
\end{problem}

If the conjecture above holds true, then it may be useful in studying the $h^*$-polynomial due to the following proposition due to Stanley \cite{StanleyDecompositions}, which would require us instead to study the $h$-polynomial of the triangulation. 

\begin{proposition}
If $P$ is a lattice polytope that admits a unimodular triangulation, then the $h^*$-polynomial is given by the $h$-polynomial of the triangulation.     
\end{proposition}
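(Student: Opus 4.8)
The plan is to establish the polynomial identity $h^*(P;z)=h(\mathcal{T};z)$ by cutting $P$ into half-open unimodular simplices and comparing Ehrhart series. Write $n=\dim P$. On the polytope side, $\sum_{t\geq 0}\operatorname{ehr}_P(t)z^t=h^*(P;z)/(1-z)^{n+1}$; on the combinatorial side, if $\mathcal{T}$ is a triangulation whose maximal faces have $n+1$ vertices, then $h(\mathcal{T};z)=\sum_{j=0}^{n+1}f_{j-1}(\mathcal{T})\,z^{j}(1-z)^{n+1-j}$, where $f_{j-1}$ counts the $(j-1)$-dimensional faces. The computational cornerstone is that a closed unimodular $n$-simplex $\Delta$ has $\operatorname{ehr}_\Delta(t)=\binom{t+n}{n}$, hence Ehrhart series $1/(1-z)^{n+1}$; and, by inclusion--exclusion over deleted facets (every intersection of $i$ facets of $\Delta$ being a unimodular $(n-i)$-face), the half-open simplex $\Delta^{\circ}$ obtained by deleting $j$ facets of $\Delta$ has Ehrhart series
\[
\sum_{S\subseteq\{1,\dots,j\}}(-1)^{|S|}\frac{1}{(1-z)^{\,n+1-|S|}}=\frac{(1-(1-z))^{j}}{(1-z)^{n+1}}=\frac{z^{j}}{(1-z)^{n+1}}.
\]

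First I would fix a half-open decomposition of $P$ subordinate to $\mathcal{T}$: choose a point $\mathbf{q}$ in the relative interior of $P$ in general position (on no affine hull of a face of $\mathcal{T}$), and from each maximal simplex $\sigma$ delete exactly the facets ``visible from $\mathbf{q}$'' --- equivalently, order the maximal simplices by a line shelling determined by $\mathbf{q}$ and delete from $\sigma$ its intersection with the union of the preceding simplices. This yields pairwise disjoint half-open simplices $\sigma^{\circ}$ with $\bigcup_\sigma\sigma^{\circ}=P$, so $\operatorname{ehr}_P(t)=\sum_\sigma\operatorname{ehr}_{\sigma^{\circ}}(t)$ for all $t\geq 0$, and therefore $\sum_t\operatorname{ehr}_P(t)z^t=\sum_\sigma(\text{Ehrhart series of }\sigma^{\circ})$.

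Next I would count, for each $j$, the maximal simplices from which exactly $j$ facets are deleted; call this number $a_j$. By the shelling-theoretic interpretation of the $h$-vector, $a_j$ is precisely the number of $j$-element restriction sets, i.e.\ $a_j=h_j(\mathcal{T})$; moreover, since $\mathcal{T}$ triangulates the ball $P$, no restriction set is the full vertex set of a maximal simplex, so $0\leq j\leq n$ and $h_{n+1}(\mathcal{T})=0$ (consistent with $\deg h^*(P;z)\leq n$). Plugging the half-open series in gives
\[
\sum_{t\geq 0}\operatorname{ehr}_P(t)\,z^t=\sum_{\sigma}\frac{z^{\,j(\sigma)}}{(1-z)^{n+1}}=\frac{1}{(1-z)^{n+1}}\sum_{j=0}^{n}h_j(\mathcal{T})\,z^{j}=\frac{h(\mathcal{T};z)}{(1-z)^{n+1}},
\]
and comparing numerators with $\sum_t\operatorname{ehr}_P(t)z^t=h^*(P;z)/(1-z)^{n+1}$ forces $h^*(P;z)=h(\mathcal{T};z)$.

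The hard part is the middle step: producing the half-open tiling and certifying that the deletion multiplicities are the $h$-vector entries. For the triangulations relevant here --- those coming from squarefree initial ideals, which are regular --- regularity supplies a genuine line shelling of $\mathcal{T}$, and then both the disjoint half-open tiling and the restriction-set count are standard facts about shellable complexes. For an arbitrary unimodular triangulation one instead uses the generic-point (``half-open'') decomposition available in the literature, which produces the tiling directly, together with the combinatorial identity rewriting $\sum_\sigma z^{j(\sigma)}$ as $h(\mathcal{T};z)$; some care with boundary bookkeeping (distinguishing facets of $\sigma$ lying in $\partial P$ from interior ones) is needed to ensure the pieces partition $P\cap\Z^n$, but this is routine once $\mathbf{q}$, or the shelling, is fixed.
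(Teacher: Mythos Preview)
Your argument is correct and follows the standard modern route to this result (half-open decomposition of $P$ into unimodular pieces, together with the shelling/restriction-set interpretation of the $h$-vector). The only point that deserves a bit more care is the non-shellable case: you correctly flag that for an arbitrary unimodular triangulation one needs the generic-point half-open decomposition rather than a shelling, but the identity $\sum_\sigma z^{j(\sigma)}=h(\mathcal{T};z)$ then requires its own justification (e.g.\ via the partition of the faces of $\mathcal{T}$ into intervals indexed by maximal simplices, each interval contributing $z^{j(\sigma)}(1-z)^{n+1-j(\sigma)}$ to the face-generating polynomial). You gesture at this, and it is routine, but it is the genuine content of the non-regular case.

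As for comparison with the paper: the paper does not prove this proposition at all. It is stated as a known result due to Stanley (with a citation to \cite{StanleyDecompositions}) and used only to motivate a problem in the final section. So your write-up supplies strictly more than the paper does; there is no alternative approach in the paper to compare against.
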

%%%%%%%%%%%%%%%%%%%%%%%%%%%%%%%%%%%%%%%%%%%%%%%%%%%%%%%%%%%%%%%%%%%%%

\subsection{On \texorpdfstring{$\mathbf{x}$}{\textbf{x}}-parking function polytopes}\label{sec:general-x}
The main object of study in this paper is the $\bx$-parking function polytope for $\bx=(a,b,\dots,b)$. 
One can ask for the face structure, volume, and lattice-point enumeration for parking function polytopes where $\bx\neq (a,b,\dots,b)$.
In the same spirit as Stanley's original problem \cite{Sta}, we pose the following:

\begin{problem}\label{problem_x} For $\bx=(x_1,\dots,x_n)\in \Z_{>0}^n$, let $\mathfrak{X}_n$ be the convex hull of $\bx$-parking functions of length $n$. 
\begin{enumerate}
    \item Find the number of $k$-dimensional faces of $\mathfrak{X}_n$ for $k \in \{ 0, \ldots, n\}$ and given $\bx$. 
    \item Find the volume of $\mathfrak{X}_n$ for given $\bx$. 
    \item Find the number of integer points in $\mathfrak{X}_n$ for given $\bx$, i.e., the number of elements of $\mathfrak{X}_n \cap \Z^n$.
    \item More generally, find a formula for the Ehrhart polynomial (or equivalently, the $h^*$-polynomial) of $\mathfrak{X}_n$ for given $\bx$.
\end{enumerate}    
\end{problem}
This problem is likely to be challenging in its full generality, as even finding explicit formulas for the number of $\mathbf{x}$-parking functions for arbitrary $\mathbf{x}$ is noted by Yan to be challenging \cite{Yan2}. 
General enumerative results can  be found in \cite{KY} in terms of Gon\v{c}arov polynomials and more on the history of enumerative results can be found in \cite{GH}.
A starting point for Problem \ref{problem_x} could be to consider some of those special cases from \cite{Yan2} and explore the connection to Gon\v{c}arov polynomials.

We note that the problem to determine the number of lattice points and the Ehrhart polynomial (or $h^*$-polynomial) for the $\bx$-parking function polytope when $\bx=(a,b,\dots,b)$, remains open. 
%%%%%%%%%%%%%%%%%%%%%%%%%%%%%%%%%%%%%%%%%%%%%%%%%%%%%%%%%%%%%%%%%%%%%

\subsection{Other generalizations of parking functions and their convex hulls}
There are a plethora of generalizations of parking functions in the literature and one can naturally ask similar problems to Problem \ref{problem_x} for their favorite generalization.  
One such generalization is known as the $(a,b)$-\emph{rational parking functions}. 

There is a well-known bijection between Dyck paths of length $n$ and all possible increasing rearrangements of parking functions of length $n$. 
By labeling the North steps of the Dyck paths with elements in $[n] = \{1,2,\dots, n\}$ such that each element appears exactly once and the labels increase within each column going North, we can construct a bijection between the labeled Dyck paths of length $n$ and all parking functions of length $n$.
Now, let $a,b \in \Z_{>0}$.
An $(a,b)$-\emph{Dyck path} is a lattice path from $(0,0)$ to $(b,a)$ (that is, with $a$ North steps and $b$ East steps) which stays weakly above the diagonal line $y = \frac{a}{b}x$. 
A $(n,n)$-Dyck path is just a standard Dyck path of length $n$. 
There is a canonical bijection between $(n,n)$-Dyck paths and $(n,n+1)$-Dyck paths since the last step of a $(n,n+1)$-Dyck paths must be an East step.
If $a,b$ are coprime, the number of $(a,b)$-Dyck paths is given by $$\frac{1}{a+b}\binom{a+b}{a,b} = \frac{(a+b-1)!}{a!b!},$$ and is called the \emph{rational Catalan number}.

\begin{definition}
Let $a,b$ be coprime.
An $(a,b)$-\emph{parking function} is an $(a,b)$-Dyck path together with a labeling of the North steps by the set $[a] = \{1,2,\dots, a\}$ such that the labels increase within each column going North. 
Define the $(a,b)$-\emph{parking function polytope} $\mathcal{P}_{a,b}$ as the convex hull of all $(a,b)$-parking functions of length $n$ in $\R^n$.
\end{definition}

\begin{remark}
There are $b^{a-1}$ many $(a,b)$-parking functions for coprime $(a,b)$.
The classical parking functions are recovered when $(a,b) = (n,n+1)$.
\end{remark}

The following are two propositions towards the study of the $(a,b)$-\emph{parking function polytope}. 

\begin{proposition}\label{prop:rational_vertices}
Consider the $(a,b)$-parking function polytope $\mathcal{P}_{a,b}$.
\begin{enumerate} 
    \item $\mathcal{P}_{a,b}$ is an $a$-dimensional polytope where the vertices are permutations of 
\[(\underbrace{1,\ldots, 1\,}_k, b_{k+1}, b_{k+2}, \ldots, b_a),\]
for $1 \leq k \leq a$ where $b_i = \lceil \frac{b}{a}(i-1)\rceil$ for $i>1$, $b_1 = 1$.

\item If $b>a$, then the number of vertices of  $\mathcal{P}_{a,b}$  is 
    \[a! \left(\frac{1}{1!} + \frac{1}{2!} + \cdots + \frac{1}{a!}\right).\]
 If $b<a$, for any $1\leq i\leq b$, let $m_i = | \{j \text{ such that } b_j = i\}|$.  Then the number of vertices of $\mathcal{P}_{a,b}$ is 
        \[a!  \left(\frac{1}{m_1!m_2!\cdots m_b!} + \sum\limits_{k=2}^b M_k\right),\]
where for $2\leq k \leq b$, $$M_k = \frac{1}{m_{k+1}!\cdots m_b!} \left(\sum\limits_{i=1}^{m_k} \frac{1}{(m_1+\cdots + m_{k-1} + i)! (m_k-i)!}\right).$$
\end{enumerate}   
\end{proposition}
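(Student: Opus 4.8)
The plan is to follow the template of Proposition~\ref{prop:abn_vertex} and Proposition~\ref{thm:num_vertices}, after converting the rational parking condition into a coordinate inequality. Throughout assume $a,b\ge 2$ (for $a=1$ or $b=1$ the polytope is a point) and regard $\mathcal{P}_{a,b}\subseteq\R^a$, since an $(a,b)$-parking function records, for each label $i\in[a]$, the ($1$-indexed) column of the North step labeled $i$. First I would record the dictionary: in an $(a,b)$-Dyck path the North step at height $i$ is preceded by exactly $c_i-1$ East steps, and staying weakly above $y=\tfrac{a}{b}x$ is equivalent to $c_i-1\le\tfrac{b}{a}(i-1)$, hence $c_i\le\lfloor\tfrac{b}{a}(i-1)\rfloor+1$; conversely every weakly increasing sequence obeying these bounds is realized by such a path. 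Since $\gcd(a,b)=1$, for $2\le i\le a$ the quantity $\tfrac{b}{a}(i-1)$ is not an integer, so $\lfloor\tfrac{b}{a}(i-1)\rfloor+1=\lceil\tfrac{b}{a}(i-1)\rceil=b_i$; therefore a point of $\R^a$ is an $(a,b)$-parking function if and only if its nondecreasing rearrangement $x_{(1)}\le\cdots\le x_{(a)}$ satisfies $x_{(1)}=1$ and $x_{(i)}\le b_i$ for all $i$, and $(b_1,\dots,b_a)$ is the coordinatewise-largest sorted parking function.

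For part~(1), the vectors $(1,\dots,1)$ and, for $i=1,\dots,a$, the vector obtained from it by placing $b_a\ge 2$ in coordinate $i$ are $(a,b)$-parking functions and are affinely independent, so $\dim\mathcal{P}_{a,b}=a$. For the vertices, $p_k:=(\underbrace{1,\dots,1}_{k},b_{k+1},\dots,b_a)$, $1\le k\le a$, is an $(a,b)$-parking function whose sorted form meets the bounds with equality in its last $a-k$ slots, and it is a vertex because it simultaneously pins its first $k$ coordinates at the global minimum $1$ and its remaining coordinates at their maximal admissible values, so by the rearrangement inequality it is the unique maximizer over $\mathcal{P}_{a,b}$ of a linear functional weighting the first group of positions very negatively and the second group with strictly increasing positive weights; hence every permutation of a $p_k$ is a vertex. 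Conversely, let $x$ be an $(a,b)$-parking function that is not a permutation of any $p_k$, and let $k$ be its number of coordinates equal to $1$ (note $k$ is at least the number of indices $j$ with $b_j=1$, so $b_{k+1}\ge 2$). Then $x_{(i)}\le b_i$ for $i>k$ but not all of these are equalities, so pick the largest $i_0>k$ with $v:=x_{(i_0)}<b_{i_0}$; the copies of $v$ occupy a block of sorted positions $[p,q]$ with $p\le i_0\le q$, and bumping one such coordinate to $v+1$ changes the sorted vector only in slot $q$, where $v+1\le b_{i_0}\le b_q$, so the result is again an $(a,b)$-parking function, as is the result of decrementing that coordinate (legal since $v\ge 2$); thus $x$ is the midpoint of two distinct parking functions and is not a vertex. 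So the vertex set is exactly the permutations of $p_1,\dots,p_a$.

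For part~(2) I would count distinct $S_a$-orbits. If $b>a$, each interval $(\tfrac{b}{a}(i-1),\tfrac{b}{a}i]$ contains an integer, so $1=b_1<\cdots<b_a$ are distinct; then $p_k$ has exactly $k$ coordinates equal to $1$ and $a-k$ further distinct coordinates, so distinct $k$ give distinct orbits, $p_k$ has $a!/k!$ permutations, and the total is $a!\bigl(\tfrac1{1!}+\cdots+\tfrac1{a!}\bigr)$. If $b<a$, consecutive $b_i$ differ by $0$ or $1$ with $b_1=1$, $b_a=b$, so each value $\ell\in\{1,\dots,b\}$ occurs $m_\ell\ge 1$ times and the sorted vector is $(1^{m_1},2^{m_2},\dots,b^{m_b})$ with $\sum_\ell m_\ell=a$. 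For $k$ in the block $m_1+\cdots+m_{\ell-1}<k\le m_1+\cdots+m_\ell$ (the first block being $1\le k\le m_1$), deleting the first $k$ entries of this sorted vector and prepending $k$ ones shows that $p_k$ has value multiset $\{1^{k},\,\ell^{\,m_1+\cdots+m_\ell-k},\,(\ell+1)^{m_{\ell+1}},\dots,b^{m_b}\}$; hence all $k$ in the first block give the single point $(1^{m_1},2^{m_2},\dots,b^{m_b})$, and points arising from different blocks have different numbers of $1$'s and so are distinct. Adding up the multinomial permutation counts within block $\ell$ over $i=k-(m_1+\cdots+m_{\ell-1})=1,\dots,m_\ell$ yields $\tfrac{a!}{m_1!\cdots m_b!}$ from the first block and $a!\,M_\ell$ from each block $\ell\ge 2$, giving the asserted total.

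The main obstacle is the $b<a$ bookkeeping in part~(2): one must identify precisely which parameters $k$ collapse to the same point, record the resulting multisets, and verify that the multinomial counts reassemble into exactly $a!\bigl(\tfrac1{m_1!\cdots m_b!}+\sum_{k=2}^{b}M_k\bigr)$. A secondary point is that the coprimality of $a,b$ together with the $0$-versus-$1$ indexing of columns is what forces the maximal sorted parking function to be $(b_1,\dots,b_a)$ with $b_i=\lceil\tfrac{b}{a}(i-1)\rceil$; with the inequality description $x_{(i)}\le b_i$ in hand, the vertex argument of part~(1) is a routine adaptation of the $\mathbf{x}$-parking case, the only subtlety being the handling of ties when incrementing a coordinate, which is resolved because the bumped value lands in a sorted slot whose bound $b_q\ge b_{i_0}$ still accommodates it.
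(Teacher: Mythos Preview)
The paper explicitly omits the proof of this proposition (see the line ``proofs omitted'' preceding the two propositions in Section~\ref{sec:future}), so there is nothing to compare against. Your argument is correct and follows exactly the template of Propositions~\ref{prop:abn_vertex} and~\ref{thm:num_vertices}: translate the Dyck-path condition into the sorted bound $x_{(i)}\le b_i$ using coprimality, exhibit each permutation of $p_k$ as the unique maximizer of a suitable linear functional, and show every other parking function is the midpoint of two parking functions by bumping a single coordinate (your observation that the bumped value lands in sorted slot $q$ with $b_q\ge b_{i_0}$ is exactly the subtlety that needs addressing). The $b<a$ count is the only place requiring real bookkeeping, and you have it right: all $k$ with $1\le k\le m_1$ give the same orbit $\{1^{m_1},2^{m_2},\dots,b^{m_b}\}$, contributing the term $a!/\bigl(m_1!\cdots m_b!\bigr)$, while for $\ell\ge 2$ each $k$ in block~$\ell$ gives a distinct orbit (distinguished by its number of $1$'s, which is exactly $k$ since $\ell\ge 2$), and summing the multinomial counts over $i=1,\dots,m_\ell$ recovers $a!\,M_\ell$.
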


\begin{proof} [Proof sketch]
   Any $(a,b)$-rational parking function is a permutation of the weakly increasing sequence $(\alpha_1, \alpha_2,\dots, \alpha_a)$ where $\alpha_i$ denotes the column in which the $i$-th north step happens (where we count the column $(x,0)$ as the 1st column). 
   Since this path must be above the line $y=\frac{a}{b}x$ and the north steps are from $(\alpha_i -1, i-1)$ to $ (\alpha_i -1, i)$, it follows that $\alpha_i  -1 \leq  \frac{b}{a}(i-1)$. 
   This implies $\alpha_i \leq \lceil \frac{b}{a}(i-1) \rceil$.
Thus all $(a,b)$-rational parking functions are permutations of the weakly increasing sequence   $(\alpha_1, \alpha_2,\dots, \alpha_a)$ where $\alpha_i \leq \lceil \frac{b}{a}(i-1) \rceil$.
Using a similar argument to that of Proposition \ref{prop:abn_vertex}, this implies that all vertices of $\mathcal{P}_{a,b}$ are permutations of 
$$(\underbrace{1,\cdots, 1\,}_k, b_{k+1}, b_{k+2}, \dots, b_a)$$
where $b_i = \text{max}(\alpha_i)$ for $i>1$. 
This is equivalent to saying $b_i =  \lceil \frac{b}{a}(i-1) \rceil$. 
\end{proof}

\begin{proposition}\label{prop:rational_hyperplane}
The $(a,b)$-parking function polytope $\mathcal{P}_{a,b}$ can be described by the following  inequalities:

\begin{align*}
    1&\leq x_i\leq b_a, &\text{ for } 1\leq i \leq a,\\
    x_i+x_j &\leq b_{a-1} + b_a, &\text{ for } i<j,\\
    &\vdots \\ 
    x_{i_1} + x_{i_2} + \cdots + x_{i_{a-2}} &\leq b_3 + b_4 +\cdots + b_{a-1}+  b_a, &\text{ for }  i_1 < i_2 < \cdots < i_{a-2}, \\ 
    x_1+x_2+\cdots + x_a &\leq b_1 + b_2 + \cdots + b_{a}.
\end{align*}
Note that in the case where $b>a$, this inequality description is minimal and the number of facets is equal to $2^a -1$. 
  
\end{proposition}
\begin{proof}[Proof Sketch]
    We can see that the first collection of inequalities describes any $(a,b)$-parking function, using the vertex description above and the argument given in the proof of Proposition \ref{prop:inequality}. 
    In the case where $b>a$, the same argument shows that the description is minimal, since all the $b_i$ are distinct. 
\end{proof}

\begin{remark}
    In the case where $b<a$, this collection of hyperplanes may contain redundancies, since not all $b_i$ are distinct. For example, in the case where $b = a-1$, the minimal hyperplane description is
\begin{align*}
    1&\leq x_i\leq b_a, &\text{ for } 1\leq i \leq a,\\ 
    x_i+x_j &\leq b_{a-1} + b_a, &\text{ for } i<j,\\ 
    &\vdots \\ 
    x_{i_1} + x_{i_2} + \cdots + x_{i_{a-3}} &\leq  b_4 + b_5+\cdots + b_{a-1}+  b_a, &\text{ for }  i_1 < i_2 < \cdots < i_{a-3}, \\ 
    x_1+x_2+\cdots + x_a &\leq b_1 + b_2 + \cdots + b_{a}.
\end{align*}
From this, we can see that the number of facets is equal to  $2^a -1 - \binom{a}{a-2} = 2^a -\frac{(a-2)(a+1)}{2}$.
\end{remark}

%%%%%%%%%%%%%%%%%%%%%%%%%%%%%%%%%%%%%%%%%%%%%%%%%%%%%%%%%%%%%%%%%%%%%

\section*{Acknowledgements}

The authors thank Esme Bajo and Jason Zhao for helpful conversations and Douglas Varela for his insight on the analytical tools used. We also thank the referees for helpful feedback.

%%%%%%%%%%%%%%%%%%%%%%%%%%%%%%%%%%%%%%%%%%%%%%%%%%%%%%%%%%%%%%%%%%%%%
\section*{Conflict of Interest Statement}
On behalf of all authors, the corresponding author states that there is no conflict of interest.
%%%%%%%%%%%%%%%%%%%%%%%%%%%%%%%%%%%%%%%%%%%%%%%%%%%%%%%%%%%%%%%%%%%%%

\bibliographystyle{amsplain}
\bibliography{biblio}

\end{document}